\newcommand{\set}[1]{\left\{#1\right\}}
\newcommand{\rsp}[1]{\langle#1\rangle}
\newcommand{\pderiv}[2]{\frac{\partial #1}{\partial #2}}
\newcommand{\deriv}[2]{\frac{d #1}{d #2}}
\newcommand{\R}{\mathbb{R}}
\newcommand{\bec}{\boldsymbol}
\newtheorem{theorem}{Theorem}
\newtheorem{lemma}{Lemma}[section]
\newtheorem*{corollary}{Corollary}
\theoremstyle{definition}
\newtheorem*{definition}{Definition}
\DeclareMathOperator*{\argmax}{arg\,max}
\DeclareMathOperator*{\argmin}{arg\,min}
\newenvironment{enro}{\begin{enumerate}[label=(\roman*)]}{\end{enumerate}}
\author[R. Hess]{Rowan Hess}
\address{Rowan Hess. 
Cornell University, Ithaca, NY 14853.}
\email{rdh248@cornell.edu}
\thanks{LL was partially supported by a grant from Open Philanthropy.}
\author[L.\ Levine]{Lionel Levine}
\address{Lionel Levine.
Department of Mathematics, Cornell University, Ithaca, NY 14853.
}
\email{levine@math.cornell.edu}
\begin{document}
%

\title{How to quantify the coherence of a set of beliefs}

\date{\today}

\begin{abstract}
    Given conflicting probability estimates for a set of events, how can we quantify how much they conflict? How can we find a single probability distribution that best encapsulates the given estimates? One approach is to minimize a loss function such as binary KL-divergence that quantifies the dissimilarity between the given estimates and the candidate probability distribution. Given a set of events, we characterize the facets of the polytope of coherent probability estimates about those events. We explore two applications of these ideas: eliciting the beliefs of large language models, and merging expert forecasts into a single coherent forecast.
\end{abstract}

\maketitle

\section{Introduction}
\noindent When two or more predictions conflict, what is a principled way to combine them into a single meta-prediction?
\\\\
This kind of question comes up in several different domains:

\begin{itemize}

\item Multiple experts are independently asked to make predictions, and we would like to integrate their predictions into a single meta-prediction that expresses the ``wisdom of the crowd'' \cite{galton_vox_1907, Ungar2012TheGJ, sempere2023alignment, mcandrew_aggregating_2021}.

\item A single individual may hold incoherent beliefs without being aware of the incoherence \cite{herzog_harnessing_2014}. However, when someone points out that their beliefs are incoherent they may want to update to the ``closest'' coherent set of beliefs. 

\item A market maker that has beliefs about underlying probabilities of events may want to give a coherent set of quotes so as to avoid giving a ``Dutch book" that is exploitable via risk-free arbitrage \cite{dutch_book}.
\item The expressed opinions of a large language model can be very sensitive to small changes in the prompt. To investigate whether the model holds an internally coherent belief and extract that belief, one approach is to ensemble many weak predictors derived from the model's internal state into a single meta-prediction \cite{burns2022discovering}.


\end{itemize}
    
\noindent Given a ground set $\Omega$, a list of events $\mathtt E = E_1, \ldots, E_n \subset \Omega$, and a list of \textit{credences} $q_1,\ldots,q_n$, the vector $\bec q = (q_1,\ldots,q_n)$ is called \textit{coherent} with respect to $\mathtt E$ if there is a probability measure $P$ (on the Boolean algebra generated by $E_1,\ldots,E_n$) such that $q_i = P(E_i)$ for all $i$.
\\\\
For example, when predicting the next day's weather, suppose three of our events are ``rainy'', ``warm and rainy'', ``cold and rainy''. Then we have the coherence condition  
    \[ P(\text{warm and rainy}) + P(\text{cold and rainy}) = P(\text{rainy}). \]
If ``warm or rainy'' is also one of our given events $E_i$, then we have the additional coherence condition
    \[ P(\text{warm}) + P(\text{rainy}) - P(\text{warm and rainy}) = P(\text{warm or rainy}). \]
These two examples of coherence conditions are both the result of probabilities being overdetermined: fixing the probabilities of some events determines the probabilities of others. There is also a second kind of coherence condition that results from probabilities being between by 0 and 1. For example, even if ``warm or rainy'' is not one of our events, we still have the coherence condition
    \[ P(\text{warm}) + P(\text{rainy}) - P(\text{warm and rainy}) \leq 1. \]

\noindent Given a list of events $\mathtt E$ and credences $\bec q$ we would like a systematic way of measuring incoherence: a loss function $L^*:[0, 1]^n\rightarrow \R\cup \set\infty$ satisfying $L^*(\bec q) \geq 0$, with equality if and only if $\bec q$ is coherent. One natural method of constructing such a loss is via some notion of ``distance'' from the vector $\bec q$ to the set $C(\mathtt E)$ of coherent vectors:
    \begin{equation} \label{eq:theloss} L^*(\bec q) = \inf \{L(\bec p,\bec q) \,:\, \bec p \in C(\mathtt E) \}. \end{equation}
We will discuss several choices of ``distance" function $L(\bec p,\bec q)$. We write distance in quotation marks because $L$ need not be a metric (e.g. it might not be symmetric in $\bec p$ and $\bec q$).
\\\\
The forecaster seeking to combine the beliefs of others (or individual seeking to correct their incoherent beliefs) can adopt the minimizer $\bec{p^*}$ of \eqref{eq:theloss}. In Theorem \ref{punique} below we give conditions for existence and uniqueness of $\bec{p^*}$.

\subsection{Plan of the paper}
We start Section \ref{sec:seperateBeliefs} defining the process of loss function minimization more precisely and giving conditions for when there is a unique minimizer $\bec{p^*}$. We proceed by summarizing a result from Predd et al.\ \cite{predd_probabilistic_2009} that can motivate one's choice of loss function and then by looking at two specific examples of loss functions (binary KL divergence and its transpose). We discuss natural justifications for these loss functions in Theorems \ref{thm:coherentBetter} and \ref{thm:fo}.
\\\\
Section \ref{sec:nn} explores a possible application of our method to eliciting beliefs from large language models. We use loss functions to generalize the method suggested in \cite{burns2022discovering} and consider possible modifications that this generalization allows.
\\\\
Section \ref{sec:inequalities} explores the geometry of $C(\mathtt{E})$, the set of coherent beliefs, which is a convex polytope.  It is easy to describe the vertices of this polytope (Lemma~\ref{lem:convex}), but somewhat more challenging to describe its facets (Theorem~\ref{thm:restatement}).
\\\\
In Section \ref{sec:indCohExp}, we consider how predictions should be aggregated in a setting where multiple experts each make internally coherent predictions. We seek a method to aggregate these predictions that leverages the internal coherence of each expert. 
\\\\
Finally, in Section \ref{sec:ex}, we illustrate these methods in the scenario of masked letter prediction and discuss the results.

\subsection{Related work}
This problem of the reconciliation has been examined in the past \cite{capotorti_correction_2010, thimm_inconsistency_2013}. In particular, \cite{capotorti_correction_2010} proposes the use of binary KL divergence as a loss function and discusses several of its properties.

\subsection{Preliminaries}
We will assume that there are $n$ events $E_i$ for $i\in \set{1, \dots, n}$ and that for each event $E_i$, we receive some estimate $q_i$ of its probability. These estimates may come from a single expert or multiple experts. We discuss the implications of one expert giving multiple beliefs in Section \ref{sec:indCohExp}.
\begin{definition}
A \textit{credence base} is the ordered pair $(\mathtt E, \textbf q)$ where $\mathtt E = (E_i)_{i=1}^{n}$ is a sequence containing the (potentially repeated) events whose probabilities are being estimated and $\bec q\in [0, 1]^n$ is the vector whose $i$th entry is $q_i$.
\end{definition}
\noindent
Recalling that every finite Boolean algebra is the power set of a set, let $2^\Omega$ be the Boolean algebra generated by $\mathtt E$, where $\Omega=\set{\omega_1, \dots, \omega_N}$ is finite as $\mathtt E$ is finite. For example, if $\mathtt E = (\set{\text{rock}}, \set{\text{paper}}, \set{\text{scissors},\text{rock}})$ then $\Omega = \set{\text{rock}, \text{paper}, \text{scissors}}$.
\begin{definition}
A credence base $(\mathtt E, \bec q)$ is \textit{coherent} if there exists a probability distribution $\pi:2^\Omega\rightarrow [0, 1]$ where $\pi(E_i)=q_i$ for all $i\in\set{1,\dots, n}$. We also call $\bec q$ \textit{coherent} with respect to $\mathtt E$ if $(\mathtt E, \bec q)$ is coherent.
\end{definition}
\begin{definition}
The \textit{set of coherent beliefs} over events $\mathtt E$ is         \[ C(\mathtt E)=\set{\boldsymbol p\in [0, 1]^n \,:\, (\mathtt E, \boldsymbol{p}) \text{ is coherent}}. \]
\end{definition}
\noindent 
\section{Loss Functions that Quantify Incoherence} \label{sec:seperateBeliefs}
\noindent In this section, we examine a class of loss functions $L(\bec p,\bec q)$ that are additive over events. We give conditions that guarantee that for any credence base $(\mathtt E,\bec q)$ there is a unique coherent credence base $(\mathtt E,\bec{p^*})$ minimizing the loss $L(\bec p,\bec q)$. Then we state a theorem of Predd et al.\ \cite{predd_probabilistic_2009} showing that in a certain forecasting setting it is never advantageous to submit an incoherent forecast. We proceed by giving natural justifications for 
two examples of loss functions: binary KL divergence and its transpose. 
\\\\
Recall that a function $f$ is \textit{lower semi-continuous} at $x$ if
    $$\liminf_{x_n\rightarrow x} f(x_n) \ge f(x).$$
Also, we say that a function $f$ is \textit{strictly convex when finite} if for all $t\in(0, 1)$ and $x, x'$ in its domain, then
    $$tf(x) + (1-t)f(x') \ge f(tx+(1-t)x')$$
    and the inequality is strict unless both sides are infinite.
\begin{definition}
    A \textit{dissimilarity function} $\ell(p, q)$ is a function $\ell:[0, 1]\times [0, 1]\rightarrow [0, \infty]$ satisfying 
    \begin{enro}
        \item $\ell$ is lower semi-continuous with respect to $p$
        \item $\ell$ is strictly convex when finite with respect to $p$
        \item $\ell(p, q)=0$ if and only if $p=q$.
    \end{enro} 
\end{definition}
Some examples of dissimilarity functions are:
\begin{itemize}
    \item $f(p, q) = p\ln \frac pq + (1-p) \ln \frac{1-p}{1-q}$
    \item $f^o(p, q) = f(q, p)$
    \item $\ell(p, q) = (p-q)^2$
    \item $\ell(p, q) = \left\{\begin{array}{cc}
        0 &  p=q \\
        \infty & p\ne q
    \end{array}\right..$
\end{itemize}
\noindent Given dissimilarity functions $\ell_1, \dots, \ell_n$, we define an associated \textit{loss function}
$L:[0, 1]^n\times [0, 1]^n\rightarrow [0, \infty]$ given by
\begin{equation}
    L(\textbf p, \textbf q)=\sum_{i=1}^n \ell_i(p_i, q_i).\label{eqn:lossfunction}
\end{equation}
\noindent We typically consider all dissimilarity functions $\ell_i$ to be equal to some common $\ell$. We denote the corresponding loss function $L_\ell$ and call it the loss function derived from dissimilarity function $\ell$. We omit the subscript when it is clear from context.
\\\\
Then, given a loss function $L$ with $n$ terms and a credence base $\mathcal{Q}=(\mathtt E, \bec q)$ with $n$ events, we can define the \emph{incoherence} of $\mathcal Q$ to be
\begin{equation}
    L^*(\boldsymbol q):=\min_{\boldsymbol p\in C(\mathtt E)} L(\textbf p, \textbf q).\label{eqn:lstar}
\end{equation}
We denote the minimizing coherent belief by $\bec p^*(\bec q)$. The next theorem, which extends \cite[Cor.~7]{capotorti_correction_2010} to a more general family of loss functions, gives conditions under which this minimizer is unique.

\begin{theorem}
\label{punique}
    Given a credence base $(\mathtt E, \bec q)$ and a loss function $L$ of the form in (\ref{eqn:lossfunction}), if there exists a coherent $\bec p\in C(\mathtt E)$ such that $L(\bec p, \bec q)$ is finite, then there exists a unique $\bec {p^*}$ such that
    $$L(\bec{p^*}, \bec q) = \min_{\bec p\in C(\mathtt E)} L(\bec p, \bec q).$$
    Moreover, $\bec{p^*} = \bec q$ if and only if $\bec q\in C(\mathtt E)$.
\end{theorem}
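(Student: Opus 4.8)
The plan is to reduce the statement to two structural facts and then combine them with standard convex-analysis arguments. First I would establish that $C(\mathtt E)$ is a nonempty, compact, convex subset of $[0,1]^n$: nonempty because any probability measure $\pi$ on $2^\Omega$ yields a coherent vector via $p_i = \pi(E_i)$ (and the hypothesis supplies one with finite loss in any case); compact because it is a polytope, namely the continuous linear image of the compact simplex of probability measures on $2^\Omega$; and convex because a convex combination of the witnessing measures for $\bec p^1, \bec p^2 \in C(\mathtt E)$ witnesses the corresponding convex combination of $\bec p^1, \bec p^2$. Second, I would record that, viewed as a function of its first argument with $\bec q$ fixed, $L(\cdot, \bec q) = \sum_{i=1}^n \ell_i(\cdot, q_i)$ inherits lower semi-continuity and strict convexity when finite from the $\ell_i$, since a finite sum of lower semi-continuous (resp.\ convex) functions is lower semi-continuous (resp.\ convex), and strictness survives as soon as one summand is strict.

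For existence, I would invoke the Weierstrass theorem for lower semi-continuous functions: a lower semi-continuous function attains its infimum on a nonempty compact set. Applying this to $L(\cdot, \bec q)$ on $C(\mathtt E)$ produces a minimizer $\bec{p^*}$. The hypothesis that some coherent $\bec p$ has $L(\bec p, \bec q) < \infty$, together with the lower bound $L \ge 0$ (each $\ell_i$ maps into $[0,\infty]$), guarantees that the minimum value is finite rather than $+\infty$.

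For uniqueness, suppose toward a contradiction that $\bec p^1 \neq \bec p^2$ are both minimizers, with common minimal value $m = \min_{\bec p\in C(\mathtt E)} L(\bec p, \bec q) < \infty$. Their midpoint $\bec p^{1/2} = \tfrac12(\bec p^1 + \bec p^2)$ lies in $C(\mathtt E)$ by convexity. Because $m$ is finite and every $\ell_i \ge 0$, each coordinate term $\ell_i(p^j_i, q_i)$ is finite. Fixing a coordinate $i$ with $p^1_i \neq p^2_i$, strict convexity when finite gives $\ell_i(p^{1/2}_i, q_i) < \tfrac12 \ell_i(p^1_i, q_i) + \tfrac12 \ell_i(p^2_i, q_i)$, while the remaining coordinates satisfy the non-strict convexity inequality. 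Summing over $i$ yields $L(\bec p^{1/2}, \bec q) < \tfrac12 L(\bec p^1, \bec q) + \tfrac12 L(\bec p^2, \bec q) = m$, contradicting the minimality of $m$.

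Finally, for the ``moreover'' clause: if $\bec{p^*} = \bec q$ then $\bec q = \bec{p^*} \in C(\mathtt E)$, since the minimizer lies in $C(\mathtt E)$ by construction. Conversely, if $\bec q \in C(\mathtt E)$, then property (iii) of dissimilarity functions gives $L(\bec q, \bec q) = \sum_i \ell_i(q_i, q_i) = 0$, which is the global minimum of the nonnegative function $L$; hence $\bec q$ is a minimizer, and by the uniqueness just established $\bec{p^*} = \bec q$. I expect the only delicate point to be the existence step, where one must correctly handle the extended-real-valued, merely lower semi-continuous loss and confirm attainment of a finite infimum; the compactness and convexity of $C(\mathtt E)$ and the uniqueness argument are otherwise routine.
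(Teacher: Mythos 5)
Your proposal is correct and follows essentially the same route as the paper: existence via lower semi-continuity on the compact set $C(\mathtt E)$ (the paper runs the minimizing-sequence argument that your citation of the Weierstrass theorem encapsulates), uniqueness via the midpoint strict-convexity argument, and the ``moreover'' clause via property (iii) plus uniqueness. If anything, you are slightly more careful than the paper, explicitly verifying convexity of $C(\mathtt E)$ and isolating a coordinate with $p^1_i \neq p^2_i$ to justify the strict inequality, both of which the paper leaves implicit.
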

\begin{proof}
    As the sum of lower semi-continuous and strictly convex when finite functions, $L(\cdot, \bec q)$ is also lower semi-continuous and strictly convex when finite. Choose coherent $\bec p\in C(\mathtt E)$ such that $L(\bec p, \bec q)$ is finite. As the set $C(\mathtt E)$ is compact, these is a sequence $\bec p_1, \bec p_2, \dots C(\mathtt E)$ that converges to some $\bec p^*$ so that
    $$\lim_{k\rightarrow \infty} L(\bec p_k, \bec q) = \inf_{\bec p\in C(\mathtt E)} L(\bec p, \bec q).$$
    Then, by lower semi-continuity,
    $$\inf_{\bec p\in C(\mathtt E)} L(\bec p, \bec q) \le L(\bec p^*, \bec q) \le \lim_{k\rightarrow \infty} L(\bec p_k, \bec q) =\inf_{\bec p\in C(\mathtt E)} L(\bec p, \bec q) $$
    so $\bec p^*$ is a minimizer of $L(\cdot, \bec q)$ in $C(\mathtt E)$.
    \\\\
    As for uniqueness, if $L(\bec p, \bec q)=L(\bec p', \bec q)$ for $\bec p \ne \bec p'$,
    $$L\left(\frac 12 \bec p+\frac 12 \bec p', \bec q\right)<\frac 12 L(\bec p, \bec q)+\frac 12 L(\bec p', \bec q)=L(\bec p, \bec q)$$
    because $L(\bec p, \bec q)$ and $L(\bec p', \bec q)$ must be finite. Therefore, $\bec p$ is not a minimizer of $L(\cdot, \bec q)$.
    \\\\
    If $\bec q\in C(\mathtt E)$, then $L(\bec q, \bec q)=0$ by property (iii). Therefore, as $L$ can only take non-negative values and the minimizer is unique, $\bec{p^*} = \bec q$.
\end{proof}
\noindent We discuss how to compute $\bec p^*$ in Appendix \ref{sec:findpstar} and the continuity of this $\bec p^*$ and $L^*$ in Appendix \ref{sec:continuity}.
\subsection{The coherence theorem of Predd et al.}\label{section:predd}

The number $L^*(\bec q)$ is a way of quantifying the incoherence of the credence base $(\mathtt E, \bec q)$. But which dissimilarity function $\ell$ should we use?
One way to motivate the choice of $\ell$ arises from forecasting competitions: A forecaster submits a list of predicted probabilities $\bec q$ for events $\mathtt E$, and these are scored after the outcomes of all events are known. If event $E_i$ occurred, then the forecaster receives penalty $s(1, q_i)$ for that event; if $E_i$ did not occur, then the forecaster receives penalty $s(0, q_i)$ for that event.  The forecaster aims to minimize the expectation of the random variable
    \[S(\mathtt E, \bec q)=\sum_{i=1}^n s(1_{E_i}, q_i).\]

\noindent How should $s$ be chosen in order to elicit the true beliefs of each forecaster? If a forecaster believes that event $E_i$ will happen with probability $q_i$, then the scoring rule should incentivize them to submit prediction $q_i$ in order to minimize their expected score. A function $s$ with this property is called a \textit{proper scoring rule}. 
More formally, a proper scoring rule is a function $s:\set{0, 1}\times [0, 1] \rightarrow [0, \infty]$ satisfying \cite{predd_probabilistic_2009, winkler_good_1968}:
    \begin{enro}
        \item For each $p\in[0, 1]$, the quantity \[ ps(1, q)+(1-p)s(0, q)\] is uniquely minimized at $q=p$.
        \item $s$ is continuous in $q$. Note that we allow $s$ to take the value $+\infty$, but $(i)$ ensures that all values of $s$ are finite except possibly $s(0, 1)$ and $s(1, 0)$.
    \end{enro}
    
\noindent Predd et al.\ \cite{predd_probabilistic_2009} prove that for any proper scoring rule $s$, there is a dissimilarity function $\ell$ so that for any incoherent $\bec q$, the prediction $\bec{p^*}(\bec q)$ using $\ell$ scores better than $\bec q$ no matter which events actually happen! 
    Namely, let 
    \begin{equation}\ell(p, q)=p[s(1, q)-s(1, p)]+(1-p)[s(0, q)-s(0, p)]\label{eqn:ell}\end{equation}
    be the expected excess penalty for predicting $q$ rather than $p$ when the true probability of an event is $p$. Note that for loss functions $L^*$ derived from such dissimilarity functions, we have $L^*(\bec q)\ge 0$ with equality if and only if $(\mathtt E, \bec q)$ is coherent, because such functions $\ell$ are uniquely minimized when $p=q$ as $s$ is a proper scoring rule.
\begin{theorem}[\cite{predd_probabilistic_2009}]
   Let $s$ be any proper scoring rule, and let $\ell$ be given by (\ref{eqn:ell}).
   For any credence base $(\mathtt E, \bec q)$, let $\bec{p^*}(\bec q)\in C(\mathtt E)$ be the coherent minimizer of Theorem~\ref{punique}. 
   Then, for all $\omega \in \Omega$, $${S(\mathtt E, \bec q)(\omega)-S(\mathtt{E}, \bec{p^*}(\bec q))(\omega)\ge L^*(\bec q)}.$$
   \label{thm:coherentBetter}
\end{theorem}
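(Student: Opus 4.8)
The plan is to reduce the pointwise claim to the first-order optimality of $\bec{p^*}$, tested against the vertices of the polytope $C(\mathtt E)$. For $\omega\in\Omega$ let $\bec v^\omega\in\set{0,1}^n$ be the vector with $i$-th coordinate $1_{E_i}(\omega)$; this is the coherent vector realized by the point mass $\delta_\omega$, so $\bec v^\omega\in C(\mathtt E)$ (indeed the $\bec v^\omega$ are the vertices of $C(\mathtt E)$, Lemma~\ref{lem:convex}), and $S(\mathtt E,\bec r)(\omega)=\sum_{i=1}^n s(v_i^\omega, r_i)$. Write $g(q):=s(1,q)-s(0,q)$ and $G(p):=p\,s(1,p)+(1-p)s(0,p)$. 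A direct expansion of \eqref{eqn:ell} shows that for every $b\in\set{0,1}$,
\[ s(b,q)-s(b,p)-\ell(p,q)=(b-p)\bigl[g(q)-g(p)\bigr]. \]
Summing this over $i$ with $b=v_i^\omega$, $p=p_i^*$, $q=q_i$, and recalling $L^*(\bec q)=\sum_i\ell(p_i^*,q_i)$, the assertion $S(\mathtt E,\bec q)(\omega)-S(\mathtt E,\bec{p^*})(\omega)\ge L^*(\bec q)$ becomes equivalent to the single scalar inequality
\[ \sum_{i=1}^n (v_i^\omega-p_i^*)\bigl[g(q_i)-g(p_i^*)\bigr]\ge 0\quad\text{for every }\omega\in\Omega.\qquad(\dagger) \]

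The second step identifies the left-hand side of $(\dagger)$ as a directional derivative of the loss at $\bec{p^*}$. One checks that $\ell(p,q)=G(q)-G(p)+g(q)(p-q)$ is the Bregman divergence of the concave ``generalized entropy'' $G$, so $\partial_p\ell(p,q)=g(q)-G'(p)$; differentiating $G(p)=p\,s(1,p)+(1-p)s(0,p)$ and using the first-order properness condition $\frac{d}{dq}\bigl[p\,s(1,q)+(1-p)s(0,q)\bigr]\big|_{q=p}=0$ gives $G'(p)=g(p)$, hence $\partial_p\ell(p,q)=g(q)-g(p)$. Thus the $i$-th coordinate of $\nabla_{\bec p}L(\bec{p^*},\bec q)$ is $g(q_i)-g(p_i^*)$, and $(\dagger)$ reads $\langle \nabla_{\bec p}L(\bec{p^*},\bec q),\,\bec v^\omega-\bec{p^*}\rangle\ge 0$.

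Now I would invoke optimality. Since $\bec{p^*}$ minimizes the convex function $L(\cdot,\bec q)$ over the convex set $C(\mathtt E)$ (Theorem~\ref{punique}), the first-order variational inequality gives $\langle\nabla_{\bec p}L(\bec{p^*},\bec q),\,\bec r-\bec{p^*}\rangle\ge 0$ for all $\bec r\in C(\mathtt E)$; taking $\bec r=\bec v^\omega$ yields $(\dagger)$ and hence the theorem. Equivalently, the whole argument can be packaged as the generalized Pythagorean inequality $\sum_i\ell(v_i^\omega,q_i)\ge\sum_i\ell(v_i^\omega,p_i^*)+\sum_i\ell(p_i^*,q_i)$ for the Bregman projection $\bec{p^*}$ of $\bec q$ onto $C(\mathtt E)$.

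The main obstacle is differentiability, since the clean identity $G'=g$ is only available where $G$ is differentiable. I would dispatch the interior case by noting that $g$ is continuous (as $s$ is continuous in $q$) and that $g(p)$ always lies in the superdifferential of the concave $G$ at $p$; continuity of $g$ forbids corners, so $G$ is in fact differentiable on $(0,1)$ with $G'=g$, and the argument goes through verbatim whenever $\bec{p^*}\in(0,1)^n$. The delicate case is the boundary, where some $p_i^*\in\set{0,1}$: there one replaces $\partial_p\ell$ by the inward one-sided derivative (only this direction is needed, since the segment $[\bec{p^*},\bec v^\omega]$ enters the cube through a face), and must separately watch the possibility that $s(1,0)$ or $s(0,1)=+\infty$, which requires care in interpreting the inequality if $\bec{p^*}$ is pushed onto such a face.
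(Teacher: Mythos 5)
The paper itself does not prove this theorem: it is imported from Predd et al., and your argument is essentially a reconstruction of the proof in \cite{predd_probabilistic_2009} --- reduce the pointwise score inequality to the first-order variational inequality for the Bregman projection of $\bec q$ onto $C(\mathtt E)$, tested at the vertices $\bec v^\omega$, equivalently the generalized Pythagorean inequality. Your core algebra is correct: the identity $s(b,q)-s(b,p)-\ell(p,q)=(b-p)[g(q)-g(p)]$ checks out for $b\in\set{0,1}$, the reduction to $(\dagger)$ is exact, and the interior argument is sound, including the nice observation that $g(p)=s(1,q)-s(0,q)\big|_{q=p}$ is a continuous supergradient selection of the concave $G$, which forbids corners and yields $G'=g$ on $(0,1)$.

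The one step you flag but do not finish --- the boundary case --- is a genuine (if modest) gap, and it is worth seeing exactly where the danger sits. Inequality $(\dagger)$ involves the \emph{actual} values $g(p_i^*)$, whereas optimality along the segment $[\bec{p^*},\bec v^\omega]$ only yields the analogous inequality with the one-sided derivative $G'_+(p_i^*)$ in place of $g(p_i^*)$; at an endpoint one only knows $g(0)\ge G'_+(0)$, which is the \emph{wrong} direction when $v_i^\omega=1$, and if $s(1,0)=+\infty$ then $g(0)=+\infty$ and the term $(1-p_i^*)\bigl[g(q_i)-g(0)\bigr]$ is formally $-\infty$, sinking $(\dagger)$. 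The patch is a two-case split. If $s(1,0)<\infty$, then $g$ is continuous at $0$, and since $g$ is a nonincreasing supergradient selection, $G'_+(0)=\lim_{p\to 0^+}g(p)=g(0)$, so the endpoint behaves exactly like an interior point. If $s(1,0)=+\infty$, then $g(p)\to+\infty$ as $p\to 0^+$, so the inward one-sided derivative of the loss in coordinate $i$ is $-\infty$ whenever some coherent point has positive $i$-th coordinate; hence $p_i^*=0$ can occur only when \emph{every} coherent point --- in particular every vertex $\bec v^\omega$ --- has $v_i^\omega=0$, in which case those coordinates contribute $0$ to $(\dagger)$ and $\bec{p^*}$ never incurs the infinite penalty $s(1,0)$. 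The symmetric argument handles $p_i^*=1$ via $s(0,1)$, and coordinates with $q_i=p_i^*$ contribute $0$ directly by property (iii) of $\ell$. With this patch your proof is complete and coincides with the argument of \cite{predd_probabilistic_2009}.
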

\noindent Equation (\ref{eqn:ell}) still leaves a lot of possible choices of dissimilarity function $\ell$, since there are many choices of proper scoring rule $s$. Next we examine the case of the widely-used logarithmic scoring rule. The corresponding dissimilarity function $\ell$ turns out to be a variant of the Kulback-Leibler (KL) divergence \cite{kullback_information_1951}.

\subsection{Binary KL divergence}
Capotorti, Regoli, and Vattari \cite{capotorti_correction_2010} proposed correcting incoherent probability assessments using the binary KL-divergence
\begin{equation}
    f(p, q)=p \ln \frac pq + (1-p)\ln \frac{1-p}{1-q}.\label{eqn:kldivergence}
\end{equation}
The resulting loss $L(\bec p,\bec q) = \sum_{i=1}^n f(p_i,q_i)$ can be interpreted as the \emph{total expected excess surprise of all experts}, when expert $i$ predicts probability $q_i$ for event $E_i$ and the ground truth probability of $E_i$ is $p_i$. 
\\\\This loss function can also be motivated by the logarithmic scoring rule
$$s(i, q)=i\ln\frac 1 q + (1-i)\ln\frac{1}{1-q}$$
with the goal to minimize total score $S(\mathtt E, \bec q)=\sum_{i=1}^n s(1_{E_i}, q_i)$. By Theorem \ref{thm:coherentBetter}, if $(\mathtt E, \bec{q})$ is incoherent then the prediction $\bec{p^*}(\bec q)$ using $\ell=f$ scores strictly better than the prediction $\bec q$ no matter which events actually occur.

\subsection{Transposed binary KL divergence}
Next we consider the same loss with the roles of $\bec p$ and $\bec q$ interchanged:
$$f^{o}(p, q)=f(q, p).$$
In Table~\ref{fig:Lminimizer} we describe the coherent minimizer $\bec p^*$ determined by $f$ and $f^o$ in each of two basic scenarios:
    \begin{enumerate}
    \item Partition: Fix $n$ and let $\Omega = \set{\omega_1, \omega_2, \dots, \omega_n}$. For all $1\le i \le n$, we ;et $E_i = \set{\omega_i}$. In this scenario all experts give estimates for different events, exactly one of which occurs. 
    \item Repetition: Let $\Omega = \set{\omega_1,\omega_2}$ and for all $1\le i \le n$, let $ E_i=\set{\omega_1}$. In this scenario all experts give estimates for the same event.
    \end{enumerate}
\begin{table}[h]
    \begin{center}
    \begin{tabular}{|m{4.5em} | m{16.5em}| m{13em} |}
         \hline
         \cline{2-3}& Binary KL loss & Transposed binary KL loss \\
         \hline
         Partition
         & $\ln\frac{p^*_i}{1-p^*_i}-\ln\frac{q_i}{1-q_i}$ does not depend on $i$&  $\frac{q_i-p^*_i}{p^*_i(1-p^*_i)}$ does not depend on $i$\\ 
         \hline
         Repitition
         & $\frac{p^*}{1-p^*}=\left(\prod_{i} \frac{q_i}{1-q_i}\right)^{1/n}$& $p^*=\frac{1}{n}\sum_{i} q_i$
         \\\hline
         \end{tabular}
         \caption{Description of the coherent minimizer $\bec{p^*}$ in each of two scenarios, with two different loss functions: Binary KL loss ($\ell=f$ as in (\ref{eqn:kldivergence})) and its transpose $(\ell=f^{o})$.}
         \label{fig:Lminimizer}
\end{center}
\end{table}
\noindent The motivation for switching the order of the variables in $f$ comes from the following theorem, showing that the minimizer of loss functions determined by $f^{o}$ is now, in a certain scenario, the maximum likelihood estimation of $\boldsymbol{p}$.
\\\\
Suppose the $i$th expert independently observes some number $w_i$ of independent trials of event $E_i$, and submits the estimate $q_i = \frac{k_i}{w_i}$ where $k_i$ is the number of trials in which $E_i$ occurred. Note that if $P(E_i)=p_i$, then $k_i$ has the binomial distribution $\text{Bin}(w_i,p_i)$.
\\\\
To represent that each expert has a different amount of information, we can multiply each summand in (\ref{eqn:lossfunction}) by some weight $w_i$.


\begin{theorem}\label{thm:fo}
The maximum likelihood estimator $\bec{\hat p}$ given $\boldsymbol q$ is 
    \[\argmin_{\boldsymbol{p}\in C(\mathtt E)}\sum_{i=1}^nw_if^{o}(p_i, q_i).\] 
\end{theorem}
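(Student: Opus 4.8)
The plan is to write down the likelihood of the observed trial counts as an explicit function of $\bec p$, take its logarithm, and show that the negative log-likelihood agrees with $\sum_{i=1}^n w_i f^{o}(p_i, q_i)$ up to an additive constant that does not depend on $\bec p$. Since the two objectives differ only by that $\bec p$-independent constant, they share the same minimizer over $C(\mathtt E)$, and that minimizer is the (unique, by Theorem~\ref{punique}) maximum likelihood estimator.

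First I would record the likelihood. Because the experts observe independent trials and $k_i \sim \text{Bin}(w_i, p_i)$, the data $(k_1, \dots, k_n)$ has joint probability mass function
$$\mathcal L(\bec p) = \prod_{i=1}^n \binom{w_i}{k_i} p_i^{k_i}(1-p_i)^{w_i - k_i},$$
so the MLE is $\argmax_{\bec p \in C(\mathtt E)} \mathcal L(\bec p)$. Taking logs and discarding the binomial coefficients, which do not involve $\bec p$, the MLE equals
$$\argmax_{\bec p \in C(\mathtt E)} \sum_{i=1}^n \left[ k_i \ln p_i + (w_i - k_i)\ln(1-p_i)\right].$$

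Next I would substitute $k_i = w_i q_i$ and compare with $f^{o}$. Writing out
$$w_i f^{o}(p_i, q_i) = w_i\left[ q_i \ln\frac{q_i}{p_i} + (1-q_i)\ln\frac{1-q_i}{1-p_i}\right],$$
the terms $w_i\bigl[q_i \ln q_i + (1-q_i)\ln(1-q_i)\bigr]$ depend only on $\bec q$, not on $\bec p$. Collecting them into a constant $c(\bec q)$ gives
$$\sum_{i=1}^n w_i f^{o}(p_i, q_i) = c(\bec q) - \sum_{i=1}^n w_i\left[ q_i \ln p_i + (1-q_i)\ln(1-p_i)\right].$$
Recognizing that $w_i q_i = k_i$ and $w_i(1-q_i) = w_i - k_i$, the sum on the right is exactly the log-likelihood from the previous step. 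Hence minimizing $\sum_{i=1}^n w_i f^{o}(p_i, q_i)$ over $C(\mathtt E)$ is equivalent to maximizing the log-likelihood, which proves the claim.

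There is essentially no hard step here: the argument is the standard identity that maximizing a binomial log-likelihood is the same as minimizing a transposed KL divergence to the empirical frequencies. The only points requiring care are (a) confirming that independence across experts justifies the product form of $\mathcal L$, and (b) checking that each $w_i f^{o}(\cdot, q_i)$ is still a dissimilarity function, i.e.\ lower semi-continuous and strictly convex in $p_i$ for $w_i > 0$, so that Theorem~\ref{punique} applies and the minimizer (equivalently, the MLE) is genuinely unique rather than merely existent.
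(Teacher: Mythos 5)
Your proposal is correct and follows essentially the same route as the paper's proof: write out the binomial likelihood, pass to the log, and observe that adding the $\bec p$-independent entropy terms $w_i[q_i\ln q_i + (1-q_i)\ln(1-q_i)]$ converts the negated log-likelihood into $\sum_{i=1}^n w_i f^{o}(p_i,q_i)$, so the two objectives share the same minimizer over $C(\mathtt E)$. Your added remarks about independence justifying the product form and about $w_i f^{o}$ remaining a dissimilarity function (so Theorem~\ref{punique} gives uniqueness) are sound refinements of, not departures from, the paper's argument.
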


\begin{proof}
    The probability of receiving the estimations $\boldsymbol q$ is
    $$P(\boldsymbol q | \boldsymbol p)=\prod_{i=1}^n\binom{w_i}{q_iw_i}p_i^{q_iw_i}(1-p_i)^{(1-q_i)w_i}.$$
    Next, if $\boldsymbol{\hat p}$ is the maximum likelihood estimation, then
    $$\boldsymbol{\hat p}= \argmax_{\boldsymbol p\in C(\mathtt E)}\prod_{i=1}^n\binom{w_i}{q_iw_i}p_i^{q_iw_i}(1-p_i)^{(1-q_i)w_i}$$
    $$=\argmax_{\boldsymbol p\in C(\mathtt E)}\sum_{i=1}^nq_iw_i\ln p_i+(1-q_i)w_i\ln (1-p_i)$$
    $$=\argmax_{\boldsymbol p\in C(\mathtt E)}\sum_{i=1}^nw_i(q_i\ln p_i + (1-q_i)\ln(1-p_i))$$
    $$=\argmax_{\boldsymbol p\in C(\mathtt E)}\sum_{i=1}^nw_i\left(q_i\ln \frac{p_i}{q_i} + (1-q_i)\ln \frac{1-p_i}{1-q_i}\right)$$
    $$=\argmin_{\boldsymbol p\in C(\mathtt E)}\sum_{i=1}^nw_i\left(q_i\ln \frac{q_i}{p_i} + (1-q_i)\ln \frac{1-q_i}{1-p_i}\right)$$
    $$=\argmin_{\boldsymbol p\in C(\mathtt E)}\sum_{i=1}^nw_if^o(p_i, q_i).\qedhere$$
\end{proof}
\noindent In general, $f$ produces more extreme probabilities than $f^o$ and gives more weight to extreme probability estimates, as demonstrated numerically in Appendices \ref{sec:compare} and \ref{sec:ex}.
\section{Application: Eliciting Latent Beliefs from Language Models} \label{sec:nn}
\noindent Large language models sometimes express beliefs that they ``know'' to be false, for example because the prompt includes false statements. Burns et al \cite{burns2022discovering} propose a method to elicit the model's actual belief about a natural language claim corresponding to an event $E$ from its hidden state $\phi(\text{des}_E)$ where $\text{des}_E$ is an assertion in natural language that the event $E$ holds. This method learns a linear probe $b(\phi)$ minimizing the loss
    \begin{equation}
        (q_{E} + q_{E^c} - 1)^2 + \min(q_{E},q_{E^c})^2\label{eqn:burns}
    \end{equation} 
where $q_{E} =\sigma(b(\phi(\text{des}_E)))= \frac{1}{1+e^{-b(\phi(\text{des}_E))}}$ and $q_{E^c} = \sigma(b(\phi(\text{des}_{E^c})))$ are intended to estimate the model's credence in the claims $\text{des}_E$ and $\text{des}_{E^c}$. The idea is that the term $(q_{E} + q_{E^c} - 1)^2$ is a measure of the incoherence of the linear probe, while the $\min(q_{E},q_{E^c})^2$ term encourages the linear probe to be decisive. We propose three possible elaborations of this approach, all of which train probes to minimize an expression of the form
$$\mathcal I(\bec q) + \mathcal{J}(\bec q)$$
where $\mathcal I$ is a measure of incoherence and $\mathcal J$ is a measure of indecisiveness.
\subsection{The Case of a Single Event and its Complement}\label{subsec:singleEvent}
The following proposition shows that the measure of incoherence $(q_E+q_{E^c}-1)^2$ can be derived from the dissimilarity function $\ell(p, q)=2(p-q)^2$.
\begin{lemma}
    When using dissimilarity function $\ell(p, q)=2(p-q)^2$, the incoherence function becomes
    $$L_\ell^*(q_E, q_{E^c}) = (q_{E} + q_{E^c} - 1)^2.$$
\end{lemma}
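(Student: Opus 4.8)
The plan is to first pin down the coherent set $C(\mathtt E)$ in this two-event case and then carry out an elementary constrained minimization. Here $\mathtt E = (E, E^c)$, so $E$ and $E^c$ partition $\Omega$. Any probability distribution $\pi$ on $2^\Omega$ satisfies $\pi(E) + \pi(E^c) = 1$, so a credence $\bec p = (p_E, p_{E^c})$ can be coherent only if $p_E + p_{E^c} = 1$; conversely, given any such pair with $p_E, p_{E^c} \in [0,1]$, the two-point distribution assigning mass $p_E$ to $E$ and $p_{E^c}$ to $E^c$ realizes it. Hence $C(\mathtt E) = \set{(p_E, p_{E^c}) \in [0,1]^2 \,:\, p_E + p_{E^c} = 1}$, a line segment.

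Next I would set up the minimization $L_\ell^*(q_E, q_{E^c}) = \min 2(p_E - q_E)^2 + 2(p_{E^c} - q_{E^c})^2$ over this segment. Since $\ell(p,q) = 2(p-q)^2$ is a dissimilarity function and every point of $C(\mathtt E)$ gives finite loss, Theorem~\ref{punique} guarantees a unique minimizer, so it suffices to locate one critical point. Parametrizing the segment by $p_E = t$ and $p_{E^c} = 1 - t$ with $t \in [0,1]$ turns the objective into the scalar quadratic $g(t) = 2(t - q_E)^2 + 2(1 - t - q_{E^c})^2$. Setting $g'(t) = 0$ reduces to $t - q_E = 1 - t - q_{E^c}$, giving $t^* = \tfrac{1}{2}(1 + q_E - q_{E^c})$.

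The only point requiring care is confirming that this critical point of the relaxed (single-equality-constraint) problem actually lies in the box $[0,1]$, so that the sign constraints $p_E, p_{E^c} \ge 0$ stay inactive and $t^*$ is genuinely the minimizer. Since $q_E, q_{E^c} \in [0,1]$ we have $q_E - q_{E^c} \in [-1, 1]$, whence both $t^*$ and $1 - t^*$ lie in $[0,1]$. Substituting back, a short computation exhibits the clean cancellation that makes the two residuals coincide:
$$p_E^* - q_E = p_{E^c}^* - q_{E^c} = \tfrac{1}{2}(1 - q_E - q_{E^c}),$$
so that
$$g(t^*) = 2\Big(\tfrac{1}{2}(1 - q_E - q_{E^c})\Big)^2 + 2\Big(\tfrac{1}{2}(1 - q_E - q_{E^c})\Big)^2 = (q_E + q_{E^c} - 1)^2,$$
which is the claimed identity. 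I expect no serious obstacle here beyond the feasibility check and the algebraic simplification; the factor of $2$ in $\ell$ is exactly what clears the $\tfrac14$ from the squared residuals to yield the stated expression without a spurious constant.
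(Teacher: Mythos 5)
Your proof is correct and follows essentially the same route as the paper's: both identify the minimizer $p_E^* = \tfrac{1}{2}(1 + q_E - q_{E^c})$ on the segment $p_E + p_{E^c} = 1$ (the paper via Lagrange multipliers, you via an equivalent one-variable parametrization) and substitute back to obtain $(q_E + q_{E^c} - 1)^2$. Your added checks — the explicit description of $C(\mathtt E)$ and the verification that $t^* \in [0,1]$ so the box constraints stay inactive — are details the paper leaves implicit, and they make the argument slightly more complete.
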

\begin{proof}
    See Lemma \ref{lem:burns_loss} in the appendix.
\end{proof}
\noindent This approach could be generalized to choosing $\ell$ to be either the functions $f$ or $f^o$ discussed previously. The next two lemmas give approximations for $L^*$ when $q_E+q_{E^c}-1$ is small
\begin{lemma}
    When using dissimilarity function 
    $$\ell(p, q)=f(p, q)=p\ln \frac pq + (1-p)\ln \frac{1-p}{1-q}$$
    the loss function becomes
    $$L_f^*(q_E, q_{E^c}) = \frac{(q_E + q_{E^c} - 1)^2}{(1-q_E+q_{E^c})(q_E+1-q_{E^c})}+O((q_E + q_{E^c} - 1)^3).$$\label{lem:burns_f2}
\end{lemma}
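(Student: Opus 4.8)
The plan is to exploit the fact that for the two-event list $\mathtt E = (E, E^c)$ the coherent polytope $C(\mathtt E)$ is simply the one-parameter family $\{(p, 1-p) : p \in [0,1]\}$, so that computing $L_f^*$ reduces to a one-dimensional minimization: I must minimize $L(p) = f(p, q_E) + f(1-p, q_{E^c})$ over $p \in [0,1]$. The first simplification I would make is the symmetry $f(1-p, 1-s) = f(p, s)$, immediate from the definition \eqref{eqn:kldivergence}. Setting $r := q_E$ and $s := 1 - q_{E^c}$, this recasts the problem as minimizing $f(p, r) + f(p, s)$, i.e.\ exactly the $n = 2$ instance of the Repetition scenario of Table~\ref{fig:Lminimizer}. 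The small parameter is then $\epsilon := q_E + q_{E^c} - 1 = r - s$, which vanishes precisely when $(q_E, q_{E^c})$ is coherent (whence $L_f^* = 0$ by Theorem~\ref{punique}).

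Next I would solve the minimization in closed form. The stationarity condition gives the geometric-mean-of-odds minimizer $\tfrac{p^*}{1-p^*} = \sqrt{\tfrac{rs}{(1-r)(1-s)}}$, matching the table. The key step — the one that makes everything else routine — is to simplify the optimal \emph{value}. Substituting the stationarity relation $\ln\tfrac{rs}{(1-r)(1-s)} = 2\ln\tfrac{p^*}{1-p^*}$ back into $L(p^*)$ collapses the expression to the identity $L_f^* = 2\ln(1 - p^*) - \ln\big((1-r)(1-s)\big)$, and then solving the odds relation for $1 - p^*$ yields the clean closed form
\[ L_f^*(q_E, q_{E^c}) = -2\ln\!\Big(\sqrt{rs} + \sqrt{(1-r)(1-s)}\Big). \]
This is, up to sign, the Bhattacharyya distance between the Bernoulli laws with parameters $r$ and $s$; in particular it is manifestly $0$ when $r = s$, i.e.\ when $\epsilon = 0$.

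With the closed form in hand the expansion is a direct Taylor computation. I would write $r = m + \delta$ and $s = m - \delta$ with $m = \tfrac{r+s}{2} = \tfrac{q_E + 1 - q_{E^c}}{2}$ and $\delta = \tfrac{\epsilon}{2}$, so that $rs = m^2 - \delta^2$ and $(1-r)(1-s) = (1-m)^2 - \delta^2$. Expanding each square root as $\sqrt{m^2 - \delta^2} = m - \tfrac{\delta^2}{2m} + O(\delta^4)$ (and likewise with $1-m$), the argument of the logarithm becomes $1 - \tfrac{\delta^2}{2m(1-m)} + O(\delta^4)$, and one more expansion of $\ln(1+x)$ gives $L_f^* = \tfrac{\delta^2}{m(1-m)} + O(\delta^4)$. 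Converting back via $\delta^2 = \epsilon^2/4$ and $m(1-m) = \tfrac14 (q_E + 1 - q_{E^c})(1 - q_E + q_{E^c})$ produces exactly the claimed leading term, with the denominator $(1 - q_E + q_{E^c})(q_E + 1 - q_{E^c})$ appearing on the nose.

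The only real obstacle is algebraic rather than conceptual: reaching the closed form $L_f^* = -2\ln(\sqrt{rs} + \sqrt{(1-r)(1-s)})$, since a naive substitution of $p^*$ into $L(p^*)$ yields an unwieldy expression that must be organized via log-odds as above. If one prefers to bypass the closed form, the alternative is to analyze $g(\delta) := L_f^*(m+\delta, m-\delta)$ directly: the envelope theorem gives $g'(\delta) = f_q(p^*, m+\delta) - f_q(p^*, m-\delta)$, where $f_q$ denotes the partial derivative of $f$ in its second argument, so that $g(0) = g'(0) = 0$, and $g''(0)$ follows by implicit differentiation of the stationarity condition. Either route in fact proves slightly more than stated: since $g$ is even in $\delta$ (equivalently, $L_f^*$ is symmetric in $r$ and $s$), its Taylor series contains only even powers, so the error is genuinely $O(\epsilon^4)$, which certainly implies the asserted $O(\epsilon^3)$.
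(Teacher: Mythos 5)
Your proof is correct, and although it begins exactly as the paper's proof of Lemma \ref{lem:burns_f} does --- both reduce to a one-dimensional minimization over $C(\mathtt E)=\set{(p,1-p)}$, obtain the same geometric-mean-of-odds stationarity condition, and collapse the optimal value to $2\ln(1-p^*)-\ln\big((1-r)(1-s)\big)$, which is precisely the paper's intermediate expression $\ln\frac{(1-p^*_E)^2}{(1-q_E)q_{E^c}}$ --- it diverges in the final expansion, and profitably so. The paper passes to odds variables $o=\frac{q_E}{1-q_E}$, $o'=\frac{1-q_{E^c}}{q_{E^c}}$, Taylor expands $\ln\frac{(1+o)(1+o')}{(1+\sqrt{oo'})^2}$ about $o=o'$ to get $\frac{(o-o')^2}{4o'}+O((o-o')^3)$, converts back to $\frac{(q-q')^2}{4q'(1-q')}$, and only then trades the asymmetric denominator $4q'(1-q')$ for the symmetric $(q+q')(2-q-q')$, at the cost of absorbing an extra correction into the cubic error term. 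You instead push through to the closed form $L_f^*=-2\ln\big(\sqrt{rs}+\sqrt{(1-r)(1-s)}\big)$ --- which the paper never writes down, and which is \emph{twice} the Bhattacharyya distance between the Bernoulli laws with parameters $r$ and $s$, not merely equal to it ``up to sign'' --- and then expand symmetrically about the midpoint $m=\frac{r+s}{2}$, so that the denominator $4m(1-m)=(q_E+1-q_{E^c})(1-q_E+q_{E^c})$ emerges exactly rather than as an after-the-fact substitution, and the evenness of $L_f^*$ in $\delta$ upgrades the error to $O((q_E+q_{E^c}-1)^4)$, strictly stronger than the stated $O((q_E+q_{E^c}-1)^3)$. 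Both arguments are local and share the same implicit caveat, namely that the constant in the error term depends on $(q_E,q_{E^c})$ keeping $m$ bounded away from $0$ and $1$; your envelope-theorem fallback is likewise sound here, since the minimizer is unique by Theorem \ref{punique} and interior for $m\in(0,1)$, and it correctly yields $g(0)=g'(0)=0$ with $g''(0)$ recoverable by implicit differentiation of the stationarity condition.
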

\begin{proof}
    See Lemma \ref{lem:burns_f} in the appendix.
\end{proof}
\begin{lemma}
    When using dissimilarity function 
    $$\ell(p, q)=f^o(p, q)=q\ln \frac qp + (1-q)\ln \frac{1-q}{1-p}$$
    the loss function becomes
    $$L_{f^o}^*(q_E, q_{E^c}) = \frac{(q_E + q_{E^c} - 1)^2}{(1-q_E+q_{E^c})(q_E+1-q_{E^c})}+O((q_E + q_{E^c} - 1)^3).$$\label{lem:burns_fo2}
\end{lemma}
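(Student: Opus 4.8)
The plan is to reduce to a one-dimensional constrained minimization, solve it in closed form, and then Taylor expand the resulting minimum value in the incoherence parameter $\delta := q_E + q_{E^c} - 1$. This parallels the proof of Lemma~\ref{lem:burns_f2}; indeed the two lemmas share the same leading-order expansion because $f$ and $f^o$ agree to second order around a coherent point.

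First I would use the only coherence constraint available for an event and its complement, namely $p_E + p_{E^c} = 1$, to parametrize $C(\mathtt E)$ by a single variable $p$ with $p_E = p$ and $p_{E^c} = 1-p$. Writing $f^o(p,q) = q\ln\frac qp + (1-q)\ln\frac{1-q}{1-p}$ and collecting the $p$-dependent terms of $g(p) := f^o(p,q_E) + f^o(1-p,q_{E^c})$, everything else is constant in $p$, and the $p$-dependent part is $-a\ln p - b\ln(1-p)$, where $a := q_E + 1 - q_{E^c}$ and $b := 1 - q_E + q_{E^c}$ satisfy $a+b=2$. Since $g$ is strictly convex in $p$ (each summand of $f^o$ is strictly convex in its first argument; cf.\ Theorem~\ref{punique}), solving $g'(p)=0$ gives $a(1-p)=bp$, hence the unique minimizer $p^* = a/2$, and substituting back yields a closed form for $L_{f^o}^*(q_E,q_{E^c})$.

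Next I would change coordinates from $(q_E,q_{E^c})$ to $(a,\delta)$, using $q_E = \tfrac{a+\delta}{2}$, $q_{E^c}=\tfrac{b+\delta}{2}$, $1-q_E = \tfrac{b-\delta}{2}$, $1-q_{E^c}=\tfrac{a-\delta}{2}$. The key observation is that $p^* = a/2$ does \emph{not} depend on $\delta$, so with $h(x) := x\ln(x/2)$ the closed form collapses into a sum of two symmetric second differences,
\[
L_{f^o}^* = \Big(\tfrac12 h(a+\delta) + \tfrac12 h(a-\delta) - h(a)\Big) + \Big(\tfrac12 h(b+\delta) + \tfrac12 h(b-\delta) - h(b)\Big).
\]
Because each bracket is an even function of $\delta$, all odd-order Taylor terms vanish, and since $h''(x) = 1/x$ the two brackets equal $\tfrac{\delta^2}{2a} + O(\delta^4)$ and $\tfrac{\delta^2}{2b} + O(\delta^4)$. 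Adding and using $\tfrac1a + \tfrac1b = \tfrac{a+b}{ab} = \tfrac{2}{ab}$ gives $L_{f^o}^* = \tfrac{\delta^2}{ab} + O(\delta^4)$, and since $ab = (q_E+1-q_{E^c})(1-q_E+q_{E^c})$ this is exactly the claimed formula (with error in fact better than the stated $O(\delta^3)$).

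The only real obstacle is bookkeeping: correctly separating the $p$-dependent terms from the constants in $g$, and then tracking the substitution $q_E = \tfrac{a+\delta}{2}$, etc., so that the minimum value reassembles into the two symmetric second differences above. Once in that form the result is immediate — the symmetry kills the linear-in-$\delta$ term (which is precisely where a careless expansion would introduce a spurious $O(\delta)$ error), and the identity $h''(x)=1/x$ produces the denominator $ab$ with no further calculation. The same structure, with the arguments of $h$ adjusted for $f$ in place of $f^o$, proves Lemma~\ref{lem:burns_f2}.
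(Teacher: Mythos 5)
Your proof is correct, and its first half is in substance the paper's own: the paper also reduces to a single variable via the constraint $p_E+p_{E^c}=1$ and obtains the same minimizer, $p^*_E=\frac{q_E+1-q_{E^c}}{2}$ (in the paper's notation $p^*=\frac{q+q'}{2}$ with $q=q_E$, $q'=1-q_{E^c}$, which is your $a/2$). Where you genuinely diverge is the expansion step. The paper never writes the minimum value in closed form; it instead computes $\frac{\partial}{\partial q'}L^*(q,q')=\ln\frac{q'}{1-q'}-\ln\frac{p}{1-p}$ (an envelope-theorem calculation) and $\frac{\partial^2}{\partial q'^2}L^*(q,q')=\frac{1}{q'(1-q')}-\frac{1}{2p(1-p)}$, Taylor-expands $L^*$ in $q'$ about $q'=q$, and then massages $\frac{(q-q')^2}{4p(1-p)}$ into the stated denominator while absorbing the discrepancy into $O((q-q')^3)$. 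You instead substitute $p^*=a/2$ back and obtain the exact identity
\[
L^*_{f^o}=\tfrac{a+\delta}{2}\ln\tfrac{a+\delta}{a}+\tfrac{a-\delta}{2}\ln\tfrac{a-\delta}{a}+\tfrac{b+\delta}{2}\ln\tfrac{b+\delta}{b}+\tfrac{b-\delta}{2}\ln\tfrac{b-\delta}{b},
\]
which is precisely your two symmetric second differences of $h(x)=x\ln(x/2)$ (the $\ln 2$ terms cancel, so $h(x)=x\ln x$ works equally well); I checked the coordinate change $q_E=\frac{a+\delta}{2}$, $1-q_E=\frac{b-\delta}{2}$, $q_{E^c}=\frac{b+\delta}{2}$, $1-q_{E^c}=\frac{a-\delta}{2}$ and the bracket identity, and both are right. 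What your route buys: evenness in $\delta$ kills all odd terms outright, so you get the remainder $O(\delta^4)$ (strictly sharper than the stated $O(\delta^3)$, uniformly for $a,b$ bounded away from $0$) with no second partials of $L^*$ and no envelope-theorem bookkeeping; what the paper's route buys is that it recycles the derivative formulas it already displays. One caveat on your closing sentence: the claim that ``the same structure'' proves Lemma~\ref{lem:burns_f2} is true but not with the arguments of $h$ merely ``adjusted'' — for $f$ the minimizer averages \emph{logits}, and the second-difference identity lives in log-odds coordinates, $L^*_f=H(u)+H(u')-2H\bigl(\tfrac{u+u'}{2}\bigr)$ with $H(u)=\ln(1+e^u)$, $u=\ln\frac{q}{1-q}$; converting the logit increment back to $\delta$ is nonlinear and reintroduces odd terms, so that case only yields the stated $O(\delta^3)$. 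This does not affect the lemma under review, whose proof as you give it is complete.
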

\begin{proof}
    See Lemma \ref{lem:burns_fo} in the appendix.
\end{proof}
\noindent The reason that the $(1-q_E+q_{E^c})(q_E+1-q_{E^c})$ term might be desirable to have in the denominator is that it increases the loss for the same absolute error when the probabilities are more extreme, which mirrors the intuition behind common scoring functions such as log-loss.
\subsection{Multiple Probes and Multiple Events}
A natural extension of \cite{burns2022discovering} is to train $k$ probes $b_1, \dots, b_k$ where $b_i$ is trained to elicit the model's credence from its $i$th layer hidden state $\phi_i$ using $2m$ rephrasings $\text{des}_{E, j}$ and $\text{des}_{E^c, j}$ of a natural language assertion about an event $E$ and its complement $E^c$. Then, letting $q_i(\text{des}) = \sigma(b_i(\phi_i(\text{des})))$, one could replace the $(q_E + q_{E^c}-1)^2$ term of (\ref{eqn:burns}) with
$$\mathcal I(\bec q) = L^*\left(q_i(\text{des}_{E, j}), q_i(\text{des}_{E^c, j}) \right)_{i=1}^{k}\left._{j=1}^m\right..$$
This could be further generalized to a set of $n$ interrelated events $\set{E_1, \dots, E_n}$ each of which has $m$ natural language assertions $\text{des}_{E_h, 1},  \dots, \text{des}_{E_h, m}$ that it holds, using the coherence term
$$\mathcal I(\bec q) = L^*\left(q_i(\text{des}_{E_h, j})\right)_{i=1}^{k}\left._{h=1}^n\right.\left._{j=1}^{m}\right..$$
Theorem \ref{thm:holderImpliesDiff} in Appendix \ref{sec:continuity} gives a formula for the gradient of $L^*$ assuming $\bec p^*$ is sufficiently smooth.
\\\\
Why would multiple probes ($k> 1$) help elicit beliefs? We hypothesize that the language model's beliefs -- when they exist -- arise from a bundle of internal heuristics that sometimes conflict.  Each probe $b_i$ represents one such heuristic. In the case that $L^*$ is close to zero, the model's internal heuristics are mostly self-consistent (the credences are close to an actual probability distribution) and in this case we might say that the model ``has beliefs" about the claims $E_1,...,E_n$. These beliefs could be certain or uncertain: if all the probes think a coin flip is 50\% likely to land heads, then that is a coherent belief. On the other hand, if $L^*$ is large, then it is possible that the model ``does not have beliefs" about the claims $E_i$, or else that the probes simply did not discover the right heuristics.
\\\\
Why would multiple events ($n> 2$) help elicit beliefs? By minimizing $L^*$ applied to more events, we are encouraging not just $q_E+q_{E^c}=1$ but also $q_{E\cup F} = q_E + q_F - q_{E\cap F}$ and so on. The philosophy is that to find ``beliefs", one should look for functions of the hidden state that obey the laws of probability. The logical dependencies between events are incorporated in the definition of $L^*$ as a minimum over $C(\mathtt E)$, as defined in equation (\ref{eqn:lstar}). We further study the geometry of $C(\mathtt E)$ in Section \ref{sec:inequalities}.
\subsection{Generalizing the Decisiveness Term}
Recall that the $\min(q_{E},q_{E^c})^2$ term in Equation (\ref{eqn:burns}) is present to encourage the linear probe to be decisive. However, in the scenarios above with more than 2 estimates, it is unclear what should replace it. We discuss some options below:
\begin{itemize}
    \item We want the probes to be decisive so as to maximize information they give about the language model's internal state. One way to quantify the information content of $\bec p^*$ is by the maximum entropy of a probability distribution that gives $\bec p^*$. Hence, one could use the decisiveness term
    $$\mathcal J(\bec q) = \max \set{H(\pi) \, : \pi(\bec{ \mathtt E}) = \bec p^*(\bec q)}$$
     where
    $$H(\pi) = \sum_{\omega\in \Omega} -\pi(\omega) \ln \pi(\omega).$$
    \item The above notion can be generalized. Recall that ${s(i, q)=i\ln q + (1-i) \ln (1-q)}$ is the log-loss scoring rule. Then, we can write entropy as
    $$H(\pi) = E_\pi\left[ \sum_{\omega\in \Omega}1_\omega s(1, \pi(\omega))\right].$$
    Recall that in Section \ref{section:predd}, we discussed how a proper scoring rule $s$ can be transformed into a loss function $\ell$. If using such a loss function, it might make sense to replace the log-loss function above with the proper scoring rule that $\ell$ is based on, making the decisiveness term
    $$\mathcal J(\bec q) = \max \set{E_\pi\left[ \sum_{\omega\in \Omega}1_\omega s(1, \pi(\omega))\right]: \pi(\bec{ \mathtt E}) = \bec p^*(\bec q)}$$
    for a general proper scoring rule $s$.
    \item Using the idea that the least decisive distribution $\pi$ is the distribution that maximizes 
    $$E_\pi\left[ \sum_{\omega\in \Omega}1_\omega s(1, \pi(\omega))\right],$$
    let $\bec u = \pi(\mathtt E)$ be this least decisive set of coherent beliefs and use either $\mathcal J(\bec q) = -L(\bec u, \bec p^*(\bec q))$ or $\mathcal J(\bec q) = -L(\bec p^*(\bec q), \bec u)$  to reward the probe for being more decisive.\\
    \item When using $f$ or $f^o$ as a measure of dissimilarity, the incoherence term by itself might be enough to incentivize decisiveness due to the increased sensitivity of the sigmoid function to noise when its input is close to $0$.
    To illustrate this, we revisit the scenario where we use one linear probe to find $q_E$ and $q_{E_c}$ from one natural language description of the event $E$ and one of its complement. We suppose that  that the value of the linear probe $b(\phi(\text{des}_E))$ has normal distribution $N(\sigma^{-1}(p), S^2)$ and $b(\phi(\text{des}_{E^c}))\sim N(\sigma^{-1}(1-p), S^2)$ independently where $p$ is the probe's goal credence in event $E$. Regardless of which of $f$ or $f^o$ we choose  as a dissimilarity function the expected value of the incoherence becomes approximately, using the quadratic approximation given by Lemmas \ref{lem:burns_f2} and \ref{lem:burns_fo2}:
    $$\mathbb E[L^*((q_{E}, q_{E^c}))]\approx \frac{\mathbb E[(q_{E} + q_{E^c} - 1)^2]}{4p(1-p)}.$$
    Noting $q_{E} + q_{E^c} - 1$ has mean $0$ and is the sum of two independent random variables with approximate variances $\sigma'(\sigma^{-1}(p))^2S^2=\sigma'(\sigma^{-1}(1-p))^2S^2$ by the $\delta$-method, this becomes
    $$\mathbb E[L^*((q_{E}, q_{E^c}))]\approx \frac{2 S^2}{4p(1-p)}\sigma'(\sigma^{-1}(p))^2=\frac{S^2p(1-p)}{2}.$$
    As this is minimized when $p$ is close to $0$ or $1$, the incoherence term by itself incentivizes the probe to be decisive.
\end{itemize}
\section{The Polytope of Coherent Beliefs} \label{sec:inequalities}
\noindent Here, we explore the shape of the space of coherent beliefs. We first describe it as the convex hull of a finite set of $0,1$-vectors in Lemma \ref{lem:convex} and end by describing the inequalities that bound this polytope in Theorem \ref{thm:restatement}.
\\\\
Given a set of events $\mathtt E$ with beliefs $\bec q$, let $\textbf E_i\in \set{0, 1}^N$ be the vector whose $j$th entry is $1$ if $\omega_j\in E_i$ and $0$ otherwise. Then, we can encode the structure of events $\mathtt E$ in a matrix $V$ whose $i$th row is the vector $\bec E_i$. Because $V$ contains the same information as $\mathtt E$, it will be convenient to use the pair $(V, \bec q)$ to represent the credence base $(\mathtt E, \bec q)$. We also encode a probability distribution $\pi$ on $\Omega$ with the vector $\bec \pi \in [0, 1]^N$ whose $j$th entry $\bec \pi \cdot \bec e_j = \pi(\set{\omega_j})$.
\begin{definition}
    A vector $\bec\pi\in [0, 1]^N$ is a probability vector if $\sum_{i=1}^N \pi_i=1$.
\end{definition}
\begin{lemma}
    \cite{capotorti_correction_2010} A credence base $\mathcal{Q}=(V, \bec q)$ is coherent if and only if $\boldsymbol q$ is in the convex hull of the columns of $V$. \label{lem:convex}
\end{lemma}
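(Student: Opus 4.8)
The plan is to unwind both directions of the claimed equivalence into a single statement about the matrix equation $V\bec\pi = \bec q$, and then to recognize that statement as the very definition of membership in the convex hull of the columns of $V$. The whole argument is a matter of matching up definitions, so I would keep it short.

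First I would fix the correspondence between probability distributions on $\Omega$ and probability vectors. Since $2^\Omega$ is the power set of the finite set $\Omega$, a distribution $\pi:2^\Omega\to[0,1]$ is determined by its values on the singletons, and the assignment $\pi\mapsto\bec\pi$ with $\pi_j=\pi(\set{\omega_j})$ is a bijection between probability distributions on $2^\Omega$ and probability vectors $\bec\pi\in[0,1]^N$. Indeed, nonnegativity of the $\pi_j$ together with $\sum_{j=1}^N\pi_j=1$ is exactly what the measure axioms force on the atoms, and conversely any such $\bec\pi$ extends uniquely to a measure on all of $2^\Omega$ by finite additivity.

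Next I would use additivity to compute, for each $i$,
\[\pi(E_i)=\sum_{\omega_j\in E_i}\pi_j=\bec E_i\cdot\bec\pi=(V\bec\pi)_i,\]
since $\bec E_i$ is the $i$th row of $V$. Hence the credence base $(V,\bec q)$ is coherent precisely when there exists a probability vector $\bec\pi$ with $V\bec\pi=\bec q$. Then I would rewrite the product column-wise: writing $\bec c_j$ for the $j$th column of $V$, we have $V\bec\pi=\sum_{j=1}^N\pi_j\,\bec c_j$. A nonnegative combination of the $\bec c_j$ whose coefficients sum to $1$ is by definition a convex combination, so as $\bec\pi$ ranges over all probability vectors, $V\bec\pi$ ranges over exactly the convex hull of the columns of $V$. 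Chaining the two equivalences yields the lemma.

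There is no substantial obstacle here; the content lies entirely in lining up the definitions of probability measure, matrix--vector product, and convex hull. The one step that genuinely deserves care is the converse half of the correspondence in the first paragraph: one must check that an arbitrary probability vector $\bec\pi$ determines a legitimate probability distribution on the full Boolean algebra $2^\Omega$, rather than merely an assignment of weights to singletons. This is precisely where finiteness of $\Omega$ and finite additivity are used, and it is the only place where anything beyond bookkeeping is needed.
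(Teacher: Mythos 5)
Your proof is correct and follows essentially the same route as the paper's: identify coherence with the existence of a probability vector $\bec\pi$ satisfying $V\bec\pi=\bec q$, then read the matrix product column-wise as a convex combination of the columns of $V$. The only difference is that you spell out the bijection between probability distributions on $2^\Omega$ and probability vectors, which the paper compresses into the phrase ``by definition''; this added care is harmless and, if anything, makes the argument more self-contained.
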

\begin{proof}
    By definition $\mathcal Q$ is coherent if and only if there is a probability vector $\bec\pi \in [0, 1]^{N}$ where $V \bec \pi = \boldsymbol{ q}$. If $V_i$ is the $i$th column of $V$, that is true if and only if
    $$\boldsymbol q = \sum_{i=1}^{N} \pi_i \boldsymbol V_i.\qedhere$$
\end{proof}
\noindent For example, if we receive three beliefs: one about the probability of it being warm, one about the probability of it raining, and one of the probability of it being warm and raining, then the matrix $V$ becomes
$$\begin{pmatrix}
    1 & 1 & 0 & 0\\
    1 & 0 & 1 & 0\\
    1 & 0 & 0 & 0
\end{pmatrix}.$$
So the polytope of coherent beliefs is the tetrahedron depicted in Figure \ref{fig:poytopeVisual}.
\begin{figure}[h]
    \begin{subfigure}[b]{.4\textwidth}
        \centering
        \includegraphics[width=\linewidth]{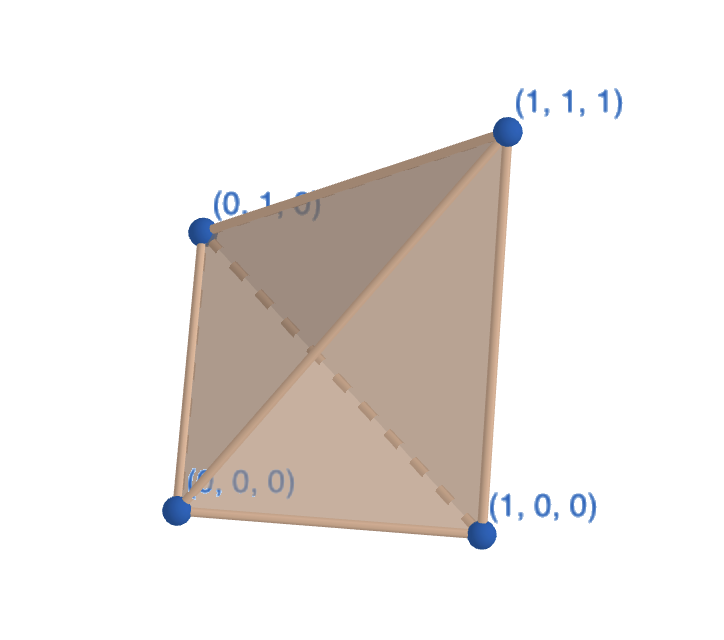}
    \end{subfigure}
    \begin{subfigure}[b]{.5\textwidth}
\begin{itemize}[leftmargin=*]
    \item The facet not containing vertex $(1, 1, 1)$ corresponds to the inequality $P(\text{warm and rainy})\ge 0$.
    \item The facets not containing either $(1, 0, 0)$ or $(0, 1, 0)$ correspond to the inequalities $P(\text{warm})\ge P(\text{warm and rainy})$ and $P(\text{rainy})\ge P(\text{warm and rainy})$.
    \item The facet not containing vertex $(0, 0, 0)$ corresponds to the inequality ${1 + P(\text{warm and rainy}) \ge P(\text{warm}) + P(\text{rainy})}$.
\end{itemize}
    \end{subfigure}
    \caption{The polytope of coherent beliefs about $P(\text{warm}), P(\text{rainy})$, and $P(\text{warm and rainy})$ and the inequalities corresponding to its facets as described in Theorem \ref{thm:restatement}.}
    \label{fig:poytopeVisual}
\end{figure}
\noindent \\Let $\bar V$ be the $(n+1)\times N$ matrix obtained by appending a final row of $1$s to $V$. Similarly, take $\bar{\textbf q}\in [0, 1]^{n+1}$ to have the same first $n$ entries as $\textbf q$ and $n+1$th entry $1$ and $E_{n+1}=\bec 1$, the vector whose entries are all $1$s. These represent the fact that $\pi(\Omega)=1$. For two vectors $\bec v, \bec w$, write $\textbf u \geq \bec v$ if $u_i\ge v_i$ for all $i$. Finally, let $O_+$ be $\R_{\ge 0}^N$ and for a matrix $M$, let $\rsp{M}$ denote the row span of $M$. 
\\\\
The following is a statement of the Dutch Book Theorem \cite{dutch_book}.
\begin{lemma}
    The following are equivalent:
    \begin{enro}
        \item $\mathcal{Q}$ is coherent.
        \item For all $(a_1, ..., a_{n+1})\in \R^{n+1}$, if $\sum_{i=1}^{n+1}a_i\boldsymbol E_i\geq \boldsymbol 0$, then $\sum_{i=1}^{n+1}a_i \bar q_i\ge 0$.
    \end{enro} \label{lem:qcoherent}
\end{lemma}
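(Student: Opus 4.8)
The plan is to recognize this as a standard theorem of the alternative (Farkas' Lemma) applied to the augmented matrix $\bar V$. First I would translate condition (i) into matrix form using Lemma \ref{lem:convex}: by that lemma $\mathcal Q=(V,\bec q)$ is coherent iff $\bec q=V\bec\pi$ for some probability vector $\bec\pi$, and appending the all-ones row to $V$ (and the entry $1$ to $\bec q$) absorbs the normalization $\sum_j\pi_j=1$ into the linear system. So (i) is equivalent to the existence of $\bec\pi\in O_+$ with $\bar V\bec\pi=\bar{\bec q}$; note that once $\bec\pi\ge\bec 0$ and $\sum_j\pi_j=1$, automatically $\bec\pi\in[0,1]^N$, so nothing is lost in relaxing $[0,1]^N$ to $O_+$. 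Meanwhile condition (ii) says precisely that there is no $\bec a\in\R^{n+1}$ with $\bar V^{\mathsf T}\bec a=\sum_i a_i\bec E_i\ge\bec 0$ and $\bar{\bec q}\cdot\bec a<0$.

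The easy direction, (i)$\Rightarrow$(ii), is a direct nonnegativity computation. Given a witnessing $\bec\pi\in O_+$ and any $\bec a$ with $\sum_i a_i\bec E_i\ge\bec 0$, we have $\sum_i a_i\bar q_i=\bec a\cdot(\bar V\bec\pi)=(\bar V^{\mathsf T}\bec a)\cdot\bec\pi\ge 0$, since both factors are entrywise nonnegative. This requires nothing beyond the definitions.

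For the converse, (ii)$\Rightarrow$(i), I would argue the contrapositive. The set $K=\set{\bar V\bec\pi:\bec\pi\in O_+}$ is the convex cone generated by the finitely many columns of $\bar V$, hence a closed convex cone. If (i) fails then $\bar{\bec q}\notin K$, so the separating hyperplane theorem yields $\bec a\in\R^{n+1}$ with $\bec a\cdot\bec x\ge 0$ for all $\bec x\in K$ and $\bec a\cdot\bar{\bec q}<0$. Taking $\bec x=\bar V\bec e_j$ (the $j$th column of $\bar V$) for each $j$ gives $\bar V^{\mathsf T}\bec a\ge\bec 0$, i.e. $\sum_i a_i\bec E_i\ge\bec 0$, while $\sum_i a_i\bar q_i<0$; this exhibits a violating $\bec a$, contradicting (ii).

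The main obstacle is this converse, and specifically the geometric input that makes it work: the cone $K$ must be closed for the separating hyperplane argument to produce a genuine separation rather than a limiting one. This is exactly where finiteness of $\Omega$ (finitely many generators) enters, and it is the crux of Farkas' Lemma; the rest is bookkeeping. I would either cite Farkas' Lemma directly with the dictionary $A=\bar V$, $\bec b=\bar{\bec q}$, $\bec x=\bec\pi$, $\bec y=\bec a$, or include the short closed-cone separation argument above.
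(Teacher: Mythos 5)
Your proposal is correct and matches the paper's proof essentially step for step: both directions reduce coherence to solvability of $\bar V\bec\pi=\bar{\bec q}$ with $\bec\pi\in O_+$, prove (i)$\Rightarrow$(ii) by the nonnegative inner-product computation $\bec a\cdot\bar{\bec q}=(\bar V^{\mathsf T}\bec a)\cdot\bec\pi\ge 0$, and obtain (ii)$\Rightarrow$(i) from Farkas' Lemma with the all-ones row supplying the normalization. Your added sketch of Farkas via closed-cone separation is a fine elaboration but not a different route, since the paper simply cites Farkas at the same point.
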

\begin{proof}
    Suppose $(ii)$. By Farkas' Lemma \cite{Farkas}, as there is no $\bec a \in \R^{n+1}$ with all entries in $\bar V^T \bec a$ non-negative and $\bec a \cdot \bec{ \bar q} < 0$, it must be that there is a solution $\bec \pi \in O_+$ to $\bar V \bec \pi = \bec{\bar q}$, with the row of $1$s in $\bar V$ ensuring that $\bec \pi$ is a probability vector.
    \\\\
    For the converse, consider $\mathcal{Q}$ coherent. Then, there exists $\boldsymbol \pi \in [0, 1]^N$ where $\bec E_i\cdot \boldsymbol \pi = \bar q_i$ for all $1\le i \le n+1$. But, if $\bar V^T \boldsymbol a \ge \bec 0$, then
    $$\boldsymbol a \cdot \boldsymbol{\bar q}=\boldsymbol a \cdot (\bar V \boldsymbol \pi)=(\bar V^T \boldsymbol a) \cdot \boldsymbol \pi\ge0$$
    as the dot product of two non-negative vectors.
\end{proof}
\begin{lemma}
    Suppose that $\bar{V}^T\boldsymbol a=\boldsymbol 0$ for some non-zero vector $\boldsymbol a \in \R^{n+1}$ where $\boldsymbol{\bar q} \cdot \boldsymbol a \neq 0$. Then, $\mathcal{Q}$ is incoherent.
    \label{lem:ConsistencyOnRemoval1}
\end{lemma}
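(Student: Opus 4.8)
The plan is to argue by contradiction straight from the definition of coherence, reducing the whole statement to a one-line adjoint computation. Suppose $\mathcal{Q}$ were coherent. By Lemma~\ref{lem:convex}, together with the fact that appending the row of $1$s to form $\bar V$ (and setting $\bar q_{n+1}=1$) encodes the normalization $\pi(\Omega)=1$, coherence is equivalent to the existence of a probability vector $\bec\pi \in O_+$ satisfying $\bar V \bec\pi = \bec{\bar q}$. I would then pair this identity with the given $\bec a$:
\[
\bec{\bar q}\cdot \bec a = (\bar V \bec\pi)\cdot \bec a = \bec\pi \cdot (\bar V^T \bec a) = \bec\pi\cdot \bec 0 = 0,
\]
where the middle equality is the adjoint property of the transpose and the last step uses the hypothesis $\bar V^T \bec a = \bec 0$. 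This contradicts the assumption $\bec{\bar q}\cdot \bec a \neq 0$, so no such $\bec\pi$ can exist and $\mathcal{Q}$ must be incoherent.

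An equivalent route, which fits the surrounding development more tightly, is to invoke the Dutch Book Theorem (Lemma~\ref{lem:qcoherent}) applied to both $\bec a$ and $-\bec a$. Since $\bar V^T \bec a = \sum_{i=1}^{n+1} a_i \bec E_i$, the hypothesis $\bar V^T \bec a = \bec 0$ says that both $\sum_i a_i \bec E_i \ge \bec 0$ and $\sum_i (-a_i)\bec E_i \ge \bec 0$ hold (trivially, as equality). If $\mathcal Q$ were coherent, condition (ii) would then force $\bec{\bar q}\cdot \bec a \ge 0$ and $\bec{\bar q}\cdot(-\bec a) \ge 0$ simultaneously, hence $\bec{\bar q}\cdot \bec a = 0$, once again contradicting the hypothesis. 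I would likely present the first (direct) argument as the proof and perhaps remark that it is the contrapositive half of Lemma~\ref{lem:qcoherent} specialized to $\sum_i a_i \bec E_i = \bec 0$.

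The honest assessment is that there is no real obstacle here: the nonvanishing condition $\bec{\bar q}\cdot\bec a \neq 0$ is used only to close the contradiction, the nonzero-ness of $\bec a$ is never needed explicitly, and the positivity $\bec\pi \ge \bec 0$ plays no role beyond guaranteeing $\bec\pi$ exists. The only points requiring care are bookkeeping: confirming that $\bar V \bec\pi = \bec{\bar q}$ is indeed the correct matrix encoding of coherence after the normalization row is appended, and confirming the indexing identity $\bar V^T \bec a = \sum_{i=1}^{n+1} a_i \bec E_i$ (so that the hypothesis is read correctly as a linear dependence among the rows $\bec E_i$ of $\bar V$). Both are immediate from the definitions introduced just before the statement.
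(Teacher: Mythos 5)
Your proposal is correct, and your second variant is precisely the paper's proof: the paper splits into the cases $\bec{\bar q}\cdot\bec a<0$ and $\bec{\bar q}\cdot\bec a>0$, observes that $\bar V^T\bec a=\bec 0\ge\bec 0$ (respectively $\bar V^T(-\bec a)=\bec 0\ge\bec 0$), and applies Lemma~\ref{lem:qcoherent} to conclude incoherence. Your preferred direct argument --- assume a probability vector $\bec\pi$ with $\bar V\bec\pi=\bec{\bar q}$ and compute $\bec{\bar q}\cdot\bec a=(\bar V\bec\pi)\cdot\bec a=\bec\pi\cdot(\bar V^T\bec a)=0$ --- is not really a different route so much as an inlining of the easy direction of Lemma~\ref{lem:qcoherent}, whose proof in the paper consists of exactly this adjoint computation. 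What the direct version buys is self-containedness (no appeal to Farkas' Lemma, which underlies the hard direction of Lemma~\ref{lem:qcoherent} but is never used here); what the paper's version buys is brevity and consistency with the betting/Dutch-book framing of the surrounding section. Your side observations are also accurate: $\bec{\bar q}\cdot\bec a\neq 0$ already forces $\bec a\neq\bec 0$, so that hypothesis is redundant, and the nonnegativity of $\bec\pi$ plays no role in the computation.
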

\begin{proof}
    If $\boldsymbol{\bar q} \cdot \boldsymbol a<0$, then we have $\bar V^T\boldsymbol a =\bec 0 \ge \bec 0$ but $\bec{\bar q} \cdot \boldsymbol a<0$. By Lemma \ref{lem:qcoherent}, $\mathcal{Q}$ is incoherent.
    \\\\
    If $\boldsymbol {\bar q} \cdot \boldsymbol a>0$, then $\bec{\bar q}\cdot -\bec a<0$, so $\mathcal{Q}$ is incoherent.
\end{proof}
\begin{lemma}
    Suppose that $\bar{V}^T\boldsymbol a=\boldsymbol 0$ for some non-zero vector $\boldsymbol a \in \R^{n+1}$ where $\boldsymbol{\bar q} \cdot \boldsymbol a = 0$. For any $1\le i \le n$ where $a_i\ne 0$, we can remove the $i$th entry of $\boldsymbol{ q}$ and the $i$th row from $ V$ to obtain $\boldsymbol{ r}$ and $ W$ so that the credence base $(\boldsymbol r, W)$ is coherent if and only if $\mathcal{Q}$ is coherent.
    \label{lem:ConsistencyOnRemoval2}
\end{lemma}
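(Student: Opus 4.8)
The plan is to use the linear dependency carried by $\bec a$ to show that, once we keep the normalization $\bec 1 \cdot \bec \pi = 1$, the $i$th constraint is a linear consequence of the remaining ones and can therefore be discarded without changing coherence. Reading the hypothesis $\bar V^T \bec a = \bec 0$ columnwise gives the dependency $\sum_{j=1}^{n+1} a_j \bec E_j = \bec 0$ among the rows of $\bar V$, where $\bec E_{n+1} = \bec 1$. Since $a_i \ne 0$, I can solve for the $i$th event vector,
\[ \bec E_i = -\frac{1}{a_i}\sum_{j \ne i} a_j \bec E_j, \]
expressing $E_i$ as a fixed linear combination of the surviving events together with the all-ones vector. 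The forward implication is then immediate: if $\mathcal Q = (V, \bec q)$ is coherent, Lemma~\ref{lem:convex} furnishes a probability vector $\bec \pi$ with $V\bec \pi = \bec q$, and deleting row $i$ leaves $W \bec \pi = \bec r$ intact, so the same $\bec \pi$ witnesses coherence of $(W, \bec r)$.

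For the reverse implication I would take a probability vector $\bec \pi$ with $W \bec \pi = \bec r$, which by definition satisfies $\bec 1 \cdot \bec \pi = 1$ and $\bec E_j \cdot \bec \pi = q_j$ for every $j \le n$ with $j \ne i$. Dotting the displayed expression for $\bec E_i$ against $\bec \pi$ and substituting these values yields
\[ \bec E_i \cdot \bec \pi = -\frac{1}{a_i}\left(\sum_{\substack{j \le n \\ j \ne i}} a_j q_j + a_{n+1}\right). \]
Now the second hypothesis $\bec{\bar q} \cdot \bec a = 0$ reads $\sum_{j \le n} a_j q_j + a_{n+1} = 0$, so the parenthesized quantity equals $-a_i q_i$; hence $\bec E_i \cdot \bec \pi = q_i$. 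Thus $\bec \pi$ satisfies the full system $V \bec \pi = \bec q$, and $\mathcal Q$ is coherent.

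I do not expect a genuine obstacle here; the content is entirely in recognizing that the two hypotheses together make the $i$th equation redundant. The only points requiring care are bookkeeping: one must carry the appended row $j = n+1$ with $\bec E_{n+1} = \bec 1$ through the dependency, since it is exactly the normalization $\bec 1 \cdot \bec \pi = 1$ that lets $\bec{\bar q} \cdot \bec a = 0$ cancel against the known values $\bec E_j \cdot \bec \pi = q_j$; and one should note that removing a row of $V$ does not alter the atom set $\Omega$, so coherence of $(W, \bec r)$ is still tested against probability vectors $\bec \pi \in [0,1]^N$ (some columns of $W$ may now coincide, but this is harmless for the convex-hull criterion of Lemma~\ref{lem:convex}).
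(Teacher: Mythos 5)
Your proof is correct, but it takes a genuinely different route from the paper's. You argue on the primal side: using Lemma~\ref{lem:convex}, coherence is witnessed by a probability vector $\bec\pi$, and you show the $i$th equation $\bec E_i \cdot \bec\pi = q_i$ is a linear consequence of the surviving equations together with the normalization $\bec 1 \cdot \bec\pi = 1$, precisely because $\bar V^T \bec a = \bec 0$ lets you solve $\bec E_i = -\frac{1}{a_i}\sum_{j\ne i} a_j \bec E_j$ and $\bec{\bar q}\cdot\bec a = 0$ supplies the matching cancellation on the credence side; the forward direction is then trivial (delete a satisfied equation). The paper instead works on the dual side via the Dutch Book criterion of Lemma~\ref{lem:qcoherent}: it maps any bet $\bec a$ over the full event list to a bet over the reduced list with identical atomwise payout ($\bar V^T\bec a = \bar W^T(\bec a_p + a_1\bec c)$, where $\bec E_i = \sum_{j\ne i} c_j \bec E_j$ encodes the same dependency you use) and identical cost, and conversely pads a violating bet against $W$ with a zero in position $i$. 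The two arguments are mirror images across the Farkas duality already set up in Lemma~\ref{lem:qcoherent}. Yours is more elementary and self-contained --- it needs only the definition of coherence, and its one delicate point (that deleting a row does not change the ambient atom set, only possibly duplicates columns of $W$) you address explicitly; the paper's bet-transformation viewpoint buys continuity with the rest of Section~4, where coherence is systematically analyzed through payout vectors in $\rsp{\bar V}\cap O_+$, positive spanning sets, and facets.
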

\begin{proof}
    Without loss of generality, $\boldsymbol E_1 = \sum_{i=2}^{n+1} c_i \boldsymbol E_i$ and $q_1=\sum_{i=2}^{n+1} c_i q_i$. Call this vector $\boldsymbol c\in R^n$. Consider removing the first entry of $\boldsymbol q$ and the first row of $ V$ to yield $\boldsymbol r$ and $W$.
    \\\\
    For any $\boldsymbol a$ where $\bar V^T \boldsymbol a \ge \bec 0$, if $\boldsymbol a_p$ is is the vector $\boldsymbol a$ with the first entry removed, then $\bar V^T \boldsymbol a = \bar W^T(\boldsymbol a_p + \boldsymbol c)$. Then, also $\boldsymbol{\bar q} \cdot \boldsymbol a = \boldsymbol{\bar r} \cdot (\boldsymbol a_p+\boldsymbol c)$. Therefore, if $(\boldsymbol r, W)$ is coherent, then $\mathcal{Q}$ is coherent.
    \\\\
    If $(\boldsymbol r, W)$ were incoherent, then $\sum_{i=2}^{n+1}c_i \boldsymbol E_i\geq 0$ but $\sum_{i=2}^{n+1}c_i q_i< 0$, so $\mathcal{Q}$ is also incoherent.
\end{proof}
\noindent
Based on Lemmas \ref{lem:ConsistencyOnRemoval1} and \ref{lem:ConsistencyOnRemoval2}, from here on, we will only consider  $\bar V$ of rank $n+1$. Then, as the rows of $\bar V$ are linearly independent, the function $Q:\rsp{\bar V}\rightarrow \R$ where $Q(\boldsymbol o)=\boldsymbol a \cdot \boldsymbol{\bar q}$ if $\bar V ^T \boldsymbol a = \boldsymbol o$ is well-defined. By Lemma \ref{lem:qcoherent}, $\mathcal{Q}$ is coherent if and only if $Q(\boldsymbol o)\geq 0$ for all $\boldsymbol o \in \rsp{\bar V}\cap O_+$.
\\\\
Vectors $\bec a\in \R^{n+1}$ can be thought of as ``bets", where the payout of the bet in world $\omega$ is equal to
$$\sum_{i=1}^{n+1} a_i 1_{E_i}(\omega)$$
where $E_{n+1}=\Omega$. We consider the bets to be against a bookmaker with credence base $\mathcal Q$ who expects no edge.
\\\\
For such a bet $\bec a$, the vector $\bec b = \bar V^T \bec a\in \R^N$ represents the payout of $\bec a$ atomwise: if atom $\omega_j$ happens, then the payout of bet $\bec a$ will be $b_j=\bec e_j \cdot (\bar V^T \bec a)$. Because we assume that $\bar V$ has full rank, there is a bijection between bets $\bec a\in \R^n$ and atomwise payouts $\bec b\in \rsp{\bar V}$. Note that it does not make sense to talk about atomwise payouts outside of $\rsp{\bar V}$ as it is impossible to make bets with such payouts.
\\\\
For finding the coherence of a credence base, it is somewhat more helpful to think in terms of atomwise payouts rather than bets. For instance, if all entries of an atomwise payout $\bec b$ are non-negative, then, if $\mathcal Q$ is coherent, the cost $Q(\bec b)$ of the bet should be non-negative. The Dutch Book Theorem \ref{lem:qcoherent} tells us that having this condition for all atomwise payouts $\bec b\in \rsp{\bar V}$ is equivalent to coherence. Recalling that the set of coherent beliefs is a polytope, each bet whose atomwise payout is non-negative corresponds to an inequality that all elements of the polytope satisfy, and if a belief is outside the polytope, it violates an inequality corresponding to some facet, which is a bet that can be made. The following lemma allows us to just consider a positive spanning set $B$ of atomwise payouts that need to be checked to ensure coherence.
\begin{definition}
    A \textit{positive spanning set} of a set $S\subseteq \R^N$ is a subset $B\subseteq S$ with the property that for any $\boldsymbol s \in S$, there exist coefficients $c_{\boldsymbol b}\in \R_{\ge 0}$ for each $\boldsymbol b \in B$ where only a finite number of $c_{\bec b}$ are non-zero and
    $$\boldsymbol s = \sum_{\boldsymbol b \in B} c_{\boldsymbol b}\boldsymbol b.$$
\end{definition}
\begin{lemma}
    Let $B$ be a positive spanning set of $\rsp{\bar V}\cap O_+$. Then, $\mathcal{Q}$ is coherent if and only if $Q(\boldsymbol b)\geq 0$ for all $\boldsymbol b \in B$.
\end{lemma}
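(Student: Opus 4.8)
The plan is to reduce everything to the characterization of coherence already in hand. By Lemma \ref{lem:qcoherent} together with the well-definedness of $Q$ on $\rsp{\bar V}$ (which rests on the standing assumption that $\bar V$ has full rank $n+1$), the credence base $\mathcal{Q}$ is coherent if and only if $Q(\boldsymbol o)\ge 0$ for every $\boldsymbol o \in \rsp{\bar V}\cap O_+$. The forward direction of the lemma is then immediate: a positive spanning set $B$ is by definition a subset of $\rsp{\bar V}\cap O_+$, so coherence of $\mathcal{Q}$ gives $Q(\boldsymbol b)\ge 0$ for all $\boldsymbol b \in B$ as a special case.

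For the converse, the one fact worth recording first is that $Q$ is a \emph{linear} functional on $\rsp{\bar V}$. Indeed, the full-rank hypothesis makes $\boldsymbol a \mapsto \bar V^T\boldsymbol a$ a linear isomorphism $\R^{n+1}\to \rsp{\bar V}$, so if $\bar V^T\boldsymbol a = \boldsymbol o$ and $\bar V^T\boldsymbol a' = \boldsymbol o'$ then $\bar V^T(\alpha \boldsymbol a + \beta \boldsymbol a') = \alpha \boldsymbol o + \beta \boldsymbol o'$, giving $Q(\alpha\boldsymbol o + \beta\boldsymbol o') = (\alpha\boldsymbol a + \beta\boldsymbol a')\cdot \boldsymbol{\bar q} = \alpha Q(\boldsymbol o) + \beta Q(\boldsymbol o')$. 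Now assume $Q(\boldsymbol b)\ge 0$ for all $\boldsymbol b \in B$ and take any $\boldsymbol o \in \rsp{\bar V}\cap O_+$. Since $B$ positively spans this set, write $\boldsymbol o = \sum_{\boldsymbol b \in B} c_{\boldsymbol b}\boldsymbol b$ with finitely many nonzero $c_{\boldsymbol b}\ge 0$; linearity then yields $Q(\boldsymbol o) = \sum_{\boldsymbol b \in B} c_{\boldsymbol b} Q(\boldsymbol b) \ge 0$, each summand being a product of nonnegative numbers. Thus $Q\ge 0$ on all of $\rsp{\bar V}\cap O_+$, and Lemma \ref{lem:qcoherent} concludes that $\mathcal{Q}$ is coherent.

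I expect no real obstacle here: the argument is essentially the observation that a linear functional nonnegative on a positive spanning set is automatically nonnegative on every nonnegative combination of that set. The only point demanding care is confirming that $Q$ is genuinely well-defined and linear, which is precisely what the full-rank reduction furnished by Lemmas \ref{lem:ConsistencyOnRemoval1} and \ref{lem:ConsistencyOnRemoval2} was set up to guarantee.
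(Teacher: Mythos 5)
Your proof is correct and follows essentially the same route as the paper: the forward direction is the special case of Lemma \ref{lem:qcoherent} (which the paper states contrapositively), and the converse uses exactly the paper's argument of writing $\boldsymbol o \in \rsp{\bar V}\cap O_+$ as a nonnegative combination over $B$ and invoking linearity of $Q$. Your explicit verification that $Q$ is well-defined and linear via the full-rank isomorphism $\boldsymbol a \mapsto \bar V^T \boldsymbol a$ is a welcome elaboration of a step the paper merely asserts, but it does not change the argument.
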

\begin{proof}
    If $Q(\boldsymbol b)<0$, for some $\boldsymbol b \in B$, then $\mathcal{Q}$ is incoherent by Lemma \ref{lem:qcoherent}.
    \\\\
    Suppose that $Q(\boldsymbol b)\ge 0$ for all $\boldsymbol b \in B$. Then, consider any $\boldsymbol o\in \rsp{\bar V}\cap O_+$. As $B$ is a positive spanning set, we can represent
    $$\boldsymbol o =\sum_{\boldsymbol b \in \mathcal{B}} c_{\boldsymbol b}\boldsymbol b.$$
    As $Q$ is linear, 
    $$Q(\boldsymbol o)=\sum_{\boldsymbol b \in \mathcal{B}} c_{\boldsymbol b}Q(\boldsymbol b)\ge 0$$
    as the sum of the product of non-negative reals. Therefore, $\mathcal{Q}$ is coherent.
\end{proof}
\begin{definition}
    The set of \textit{extremal vectors} of $\rsp{\bar V}\cap O_+$ is
    $$M:=\set{\boldsymbol b\in \rsp{\bar V}\cap O_+:\forall \boldsymbol o \in \rsp{\bar V}\cap O_+, \bec o\le \bec b \implies \boldsymbol o = \lambda \boldsymbol b \text{ for some $\lambda \in \R$}}.$$
\end{definition}
\noindent Let $O_1=\set{\boldsymbol o \in \rsp{\bar V}\cap O_+:\text{the first non-zero entry of }\boldsymbol o \text{ is 1}}$ where we fix an arbitrary ordering of the finite number of entries of $\bec o$. We will show that $M\cap O_1$ is a minimal positive spanning set of $\rsp{\bar V} \cap O_+$.
\\\\
\noindent If we think of each positive spanning set as a set of atomwise payouts that the bookmaker has to check is positive to ensure coherence, the following gives some atomwise payouts that the bookmaker definitely has to check.
\begin{lemma}
    Let $B$ be a positive spanning set of $\rsp{\bar V} \cap O_+$ where $B\subseteq O_1$. Then, $$O_1\cap M\subseteq B.$$
\end{lemma}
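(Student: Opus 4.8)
The plan is to show that each normalized extremal vector $\bec b \in O_1 \cap M$ cannot be omitted from any positive spanning set, because it is indecomposable and so must literally appear among the generators. This is the standard fact that the extreme rays of a pointed polyhedral cone are exactly the irredundant generators, adapted to the normalization $O_1$.

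First I would fix an arbitrary $\bec b \in O_1 \cap M$ and use that $\bec b \in \rsp{\bar V} \cap O_+$ together with the positive-spanning property of $B$ to write
$$\bec b = \sum_{\bec b' \in B} c_{\bec b'} \bec b', \qquad c_{\bec b'} \in \R_{\ge 0},$$
with only finitely many $c_{\bec b'}$ nonzero. The key observation is that since every $\bec b' \in B \subseteq O_1$ lies in $O_+$ and every coefficient is non-negative, each summand $\bec t_{\bec b'} := c_{\bec b'} \bec b'$ is itself non-negative; as the remaining summands are also non-negative, this forces $\bec t_{\bec b'} \le \bec b$ componentwise. Moreover $\bec t_{\bec b'} \in \rsp{\bar V} \cap O_+$, because $\rsp{\bar V} \cap O_+$ is closed under non-negative scaling.

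Next I would invoke the extremality of $\bec b$. Since $\bec t_{\bec b'} \in \rsp{\bar V} \cap O_+$ satisfies $\bec t_{\bec b'} \le \bec b$, the defining property of $M$ gives $\bec t_{\bec b'} = \lambda_{\bec b'} \bec b$ for some $\lambda_{\bec b'} \in \R$. Because $\bec b \in O_1$ has a strictly positive entry (its first nonzero entry equals $1$) while $\bec t_{\bec b'} \ge \bec 0$, the scalar $\lambda_{\bec b'}$ must be non-negative. As $\bec b \ne \bec 0$, at least one coefficient $c_{\bec b'}$ is positive, and for such a term $\bec t_{\bec b'} = c_{\bec b'} \bec b' \ne \bec 0$ (recall $\bec b' \in O_1$ is nonzero), so the corresponding $\lambda_{\bec b'}$ is in fact strictly positive. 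Hence $\bec b' = (\lambda_{\bec b'} / c_{\bec b'}) \bec b$ is a positive scalar multiple of $\bec b$.

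Finally, both $\bec b'$ and $\bec b$ lie in $O_1$, so each has first nonzero entry equal to $1$; two positive scalar multiples that are both so normalized must be equal, whence $\bec b' = \bec b$ and therefore $\bec b \in B$. I expect the only delicate points to be the bookkeeping that guarantees $\lambda_{\bec b'} > 0$ for a genuinely nonzero summand (this also rules out the degenerate scenario of $\bec b$ being ``generated only by $\bec 0$'', which is impossible since $\bec 0 \notin O_1$) and the final normalization step that upgrades ``positive scalar multiple'' to ``equal.'' Both are routine once the extremality condition has been applied to each summand separately, so the real content is simply the decomposition-into-non-negative-summands argument above.
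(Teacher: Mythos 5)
Your proof is correct and follows essentially the same route as the paper's: decompose $\bec b$ over $B$ with non-negative coefficients, note each nonzero summand is dominated by $\bec b$ componentwise, apply the extremality condition defining $M$ to conclude that summand is a scalar multiple of $\bec b$, and use the $O_1$ normalization to upgrade this to equality. Your version is slightly more careful than the paper's (explicitly verifying $\lambda > 0$ and ruling out the all-zero-coefficients case), but the underlying argument is identical.
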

\begin{proof}
    If $B$ did not include such a $\boldsymbol b\in O_1\cap M$, as $B$ is a positive spanning set, then there is a finite set $S$ where $\boldsymbol b = \sum_{s\in S} c_s\boldsymbol b_s$ for some $c_s\geq 0$. But as not all $c_s = 0$, without loss of generality, suppose $c_1\neq 0$. Then, $\boldsymbol b - c_1\boldsymbol b_1\geq 0$, so $\boldsymbol b = \lambda \boldsymbol b_1$. As the first non-zero entry of each is $1$, $\boldsymbol b = \boldsymbol b_1$.
\end{proof}
\begin{definition}
    Let $A$ be a subset of $\R^n$. Consider $Z:A\rightarrow 2^{\set{1, \dots, n}}$ where $Z(\boldsymbol o)=\set{i:o_i=0}$. A vector $\boldsymbol v$ is \textit{maximally-$0$} in $A$ if there there is no $\boldsymbol w\in A\symbol{92}\set{\boldsymbol 0}$ where $Z(\boldsymbol v)\subsetneq Z(\boldsymbol w)$.
\end{definition}
\begin{lemma}
    For any vector $\boldsymbol o$ that is maximally-$0$ in $\rsp{\bar V}$, if $Z(\boldsymbol v)=Z(\boldsymbol o)$ for some vector $\boldsymbol v\in \rsp{\bar V}$, then $\boldsymbol v \in \rsp{\boldsymbol o}$. \label{lem:z=maximally0}
\end{lemma}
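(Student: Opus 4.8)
The plan is to argue by contradiction, exploiting only that $\rsp{\bar V}$ is a linear subspace and hence closed under linear combinations. Suppose $\boldsymbol v \in \rsp{\bar V}$ satisfies $Z(\boldsymbol v) = Z(\boldsymbol o)$ but $\boldsymbol v \notin \rsp{\boldsymbol o}$, i.e.\ $\boldsymbol v$ is not a scalar multiple of $\boldsymbol o$. I will manufacture a nonzero vector $\boldsymbol w \in \rsp{\bar V}$ whose zero set strictly contains $Z(\boldsymbol o)$, directly contradicting the assumption that $\boldsymbol o$ is maximally-$0$ in $\rsp{\bar V}$.

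The construction simply cancels one additional coordinate. First dispose of the degenerate case: if $\boldsymbol o = \boldsymbol 0$ then $Z(\boldsymbol v) = Z(\boldsymbol o)$ forces $\boldsymbol v = \boldsymbol 0 \in \rsp{\boldsymbol o}$, so I may assume $\boldsymbol o \neq \boldsymbol 0$. Then there is an index $j$ with $o_j \neq 0$, equivalently $j \notin Z(\boldsymbol o)$, and since $Z(\boldsymbol v) = Z(\boldsymbol o)$ this same index satisfies $v_j \neq 0$. Set $\lambda = v_j / o_j$ and $\boldsymbol w = \boldsymbol v - \lambda \boldsymbol o$, which lies in $\rsp{\bar V}$ because the subspace is closed under linear combinations.

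It remains to check the zero pattern of $\boldsymbol w$. For every index $i \in Z(\boldsymbol o)$ we have $o_i = 0$ and also $v_i = 0$ (using $Z(\boldsymbol v) = Z(\boldsymbol o)$), so $w_i = 0$; hence $Z(\boldsymbol o) \subseteq Z(\boldsymbol w)$. Moreover $w_j = v_j - \lambda o_j = 0$ by the choice of $\lambda$, while $j \notin Z(\boldsymbol o)$, so the inclusion is strict, $Z(\boldsymbol o) \subsetneq Z(\boldsymbol w)$. Finally $\boldsymbol w \neq \boldsymbol 0$: otherwise $\boldsymbol v = \lambda \boldsymbol o \in \rsp{\boldsymbol o}$, contrary to hypothesis. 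Thus $\boldsymbol w$ is a nonzero element of $\rsp{\bar V}$ with $Z(\boldsymbol o) \subsetneq Z(\boldsymbol w)$, contradicting maximality, and we conclude $\boldsymbol v \in \rsp{\boldsymbol o}$.

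This is essentially routine linear algebra, so I do not anticipate a genuine obstacle. The only point requiring care is matching the quantifiers in the definition of maximally-$0$: the witness $\boldsymbol w$ must both be nonzero and lie inside the subspace $\rsp{\bar V}$, and these are exactly the two properties secured by the non-multiple assumption and by the closure of $\rsp{\bar V}$, respectively.
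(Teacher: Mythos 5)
Your proof is correct and follows essentially the same argument as the paper: form a linear combination of $\boldsymbol v$ and $\boldsymbol o$ that cancels a shared non-zero coordinate, observe the result is non-zero (since $\boldsymbol v \notin \rsp{\boldsymbol o}$) with zero set strictly containing $Z(\boldsymbol o)$, and contradict maximal-$0$-ness. The only difference is cosmetic --- the paper uses $\boldsymbol o - \frac{\lambda}{\mu}\boldsymbol v$ at the first non-zero index where you use $\boldsymbol v - \frac{v_j}{o_j}\boldsymbol o$ at an arbitrary one, and you additionally handle the degenerate case $\boldsymbol o = \boldsymbol 0$, which the paper leaves implicit.
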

\begin{proof}
    Suppose that $\boldsymbol v \notin \rsp{\boldsymbol o}$. Let the first non-zero entry of $\boldsymbol o$ be $\lambda$ and the first non-zero entry of $\boldsymbol v$ be $\mu$. Then, $\boldsymbol 0\neq\boldsymbol o - \frac \lambda \mu \boldsymbol v$ and $Z(\boldsymbol o) \subsetneq Z(\boldsymbol o-\frac \lambda \mu\boldsymbol v)$.
\end{proof}
\noindent It turns out that it suffices to just check the maximally-0 elements of $\rsp{\bar V}$, as shown by the following to lemmas.
\begin{lemma}
    A vector $\boldsymbol o \in \rsp{\bar V}\cap O_+$ is maximally-$0$ in $\rsp{\bar V}$ if and only if $\boldsymbol o \in M$\label{lem:MequivMax0}.
\end{lemma}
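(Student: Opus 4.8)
The plan is to prove the two implications of the biconditional separately: the forward direction (maximally-$0$ $\Rightarrow$ $\boldsymbol o\in M$) follows quickly from Lemma \ref{lem:z=maximally0}, while the converse requires an explicit construction of a witness. Throughout I write $\operatorname{supp}(\boldsymbol v)$ for the complement of $Z(\boldsymbol v)$, and I dispose of the trivial case $\boldsymbol o=\boldsymbol 0$ at the start (it is both maximally-$0$, vacuously, and in $M$).

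First I would show that if $\boldsymbol o\in\rsp{\bar V}\cap O_+$ is maximally-$0$ in $\rsp{\bar V}$ then $\boldsymbol o\in M$. Take any $\boldsymbol u\in\rsp{\bar V}\cap O_+$ with $\boldsymbol u\le\boldsymbol o$. Since $0\le u_i\le o_i$, every coordinate where $o_i=0$ also has $u_i=0$, so $Z(\boldsymbol o)\subseteq Z(\boldsymbol u)$. If $\boldsymbol u=\boldsymbol 0$ then $\boldsymbol u=0\cdot\boldsymbol o$; otherwise $\boldsymbol u\ne\boldsymbol 0$, and maximality of $\boldsymbol o$ forbids $Z(\boldsymbol o)\subsetneq Z(\boldsymbol u)$, forcing $Z(\boldsymbol u)=Z(\boldsymbol o)$. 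Lemma \ref{lem:z=maximally0} then gives $\boldsymbol u\in\rsp{\boldsymbol o}$, i.e. $\boldsymbol u=\lambda\boldsymbol o$. As $\boldsymbol u$ was arbitrary, $\boldsymbol o$ satisfies the defining condition of $M$.

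For the converse I would argue the contrapositive: if $\boldsymbol o$ is not maximally-$0$ then $\boldsymbol o\notin M$. Non-maximality produces $\boldsymbol w\in\rsp{\bar V}\setminus\set{\boldsymbol 0}$ with $Z(\boldsymbol o)\subsetneq Z(\boldsymbol w)$, i.e. $\operatorname{supp}(\boldsymbol w)\subsetneq\operatorname{supp}(\boldsymbol o)$; in particular $\boldsymbol o$ is strictly positive on every coordinate of $\operatorname{supp}(\boldsymbol w)$. I then seek a witness $\boldsymbol u\in\rsp{\bar V}\cap O_+$ with $\boldsymbol u\le\boldsymbol o$ but $\boldsymbol u\notin\rsp{\boldsymbol o}$. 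If $\boldsymbol w$ or $-\boldsymbol w$ is already nonnegative, a small multiple $\epsilon\boldsymbol w$ (resp.\ $-\epsilon\boldsymbol w$) is a cone vector $\le\boldsymbol o$ of strictly smaller support, hence not parallel to $\boldsymbol o$. Otherwise $\boldsymbol w$ has entries of both signs, and I take the largest $t_+>0$ and the most negative $t_-<0$ for which $\boldsymbol o+t\boldsymbol w\ge\boldsymbol 0$; both $\boldsymbol a=\boldsymbol o+t_+\boldsymbol w$ and $\boldsymbol c=\boldsymbol o+t_-\boldsymbol w$ are nonnegative cone vectors whose support is strictly contained in $\operatorname{supp}(\boldsymbol o)$ (each gains a new zero, and neither can vanish without forcing $\boldsymbol o\in\rsp{\boldsymbol w}$). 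Writing $\boldsymbol o=\lambda\boldsymbol a+(1-\lambda)\boldsymbol c$ with $\lambda=\tfrac{-t_-}{t_+-t_-}\in(0,1)$, the vector $\boldsymbol u=\lambda\boldsymbol a=\boldsymbol o-(1-\lambda)\boldsymbol c$ satisfies $\boldsymbol u\le\boldsymbol o$ because $(1-\lambda)\boldsymbol c\ge\boldsymbol 0$; it is nonzero with support strictly inside $\operatorname{supp}(\boldsymbol o)$, so $\boldsymbol u\notin\rsp{\boldsymbol o}$. In every case $\boldsymbol o$ violates the defining condition of $M$.

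The hard part will be this second direction. The membership test for $M$ is phrased through the coordinatewise order $\boldsymbol u\le\boldsymbol o$ rather than through supports, so one cannot simply subtract $\boldsymbol w$: a mixed-sign $\boldsymbol w$ would push some coordinates of $\boldsymbol o-t\boldsymbol w$ above $\boldsymbol o$, destroying the inequality $\boldsymbol u\le\boldsymbol o$. The essential device is to view $\boldsymbol o$ as lying in the relative interior of the two-dimensional cone $\operatorname{span}\set{\boldsymbol o,\boldsymbol w}\cap O_+$, so that it decomposes as a nonnegative combination of two smaller-support cone vectors, one of which is then automatically dominated by $\boldsymbol o$. This is exactly the point at which ``maximally-$0$ over all of $\rsp{\bar V}$'' (allowing a sign-indefinite $\boldsymbol w$) is reconciled with extremality inside the cone.
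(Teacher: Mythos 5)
Your proof is correct, and on the harder direction it takes a route that genuinely differs from the paper's. The forward implication is the paper's argument in contrapositive form: any witness $\boldsymbol u\le\boldsymbol o$ in $\rsp{\bar V}\cap O_+$ has $Z(\boldsymbol o)\subseteq Z(\boldsymbol u)$, and the borderline case $Z(\boldsymbol u)=Z(\boldsymbol o)$ is dispatched by Lemma \ref{lem:z=maximally0}, exactly as in the paper. For the converse, the paper argues in two stages: first that $\boldsymbol o\in M$ must be maximally-$0$ within the cone $\rsp{\bar V}\cap O_+$ (a nonnegative witness with larger zero set, scaled down, would sit below $\boldsymbol o$ without being parallel to it), and then that cone-maximality upgrades to maximality in all of $\rsp{\bar V}$ by subtracting the multiple $\frac{o_\downarrow}{v_\uparrow}\boldsymbol v$ of a possibly mixed-sign witness. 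You instead treat the sign-indefinite witness $\boldsymbol w$ in one shot, sliding along $\boldsymbol o+t\boldsymbol w$ to the endpoints $t_\pm$ of the nonnegativity interval, writing $\boldsymbol o=\lambda\boldsymbol a+(1-\lambda)\boldsymbol c$, and exhibiting $\boldsymbol u=\lambda\boldsymbol a=\boldsymbol o-(1-\lambda)\boldsymbol c\le\boldsymbol o$ as a direct violation of the definition of $M$; this avoids the intermediate notion of maximal-$0$-ness in the cone altogether. Your boundary choice of $t$ is in fact the more careful one: the paper's coefficient $\frac{o_\downarrow}{v_\uparrow}$ keeps $\boldsymbol h=\boldsymbol o-\frac{o_\downarrow}{v_\uparrow}\boldsymbol v$ nonnegative but does not in general create a new zero, and the asserted inclusion $Z(\boldsymbol v)\subseteq Z(\boldsymbol h)$ fails at any index in $\operatorname{supp}(\boldsymbol o)\cap Z(\boldsymbol v)$, since there $h_i=o_i\ne 0$; taking the extremal $t=\min\set{o_i/v_i : v_i>0}$, as you do (and as the paper itself does in the lemma that follows this one), is what makes the zero set strictly grow. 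One simplification available to you: the convex decomposition in your Case 2 is more than you need, because either endpoint alone, say $\boldsymbol a=\boldsymbol o+t_+\boldsymbol w$, is already a nonzero nonnegative vector of $\rsp{\bar V}$ with $\operatorname{supp}(\boldsymbol a)\subsetneq\operatorname{supp}(\boldsymbol o)$, so your Case 1 scaling ($\epsilon\boldsymbol a\le\boldsymbol o$ for small $\epsilon>0$, not parallel to $\boldsymbol o$) finishes immediately and merges the two cases.
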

\begin{proof}
    Suppose that $\boldsymbol o \not \in M$. Then, there exists  $\boldsymbol v \in \rsp{\bar V}\cap O_+\symbol{92}\rsp{\boldsymbol o}$ where $\boldsymbol o - \boldsymbol v\geq 0$. Then, $Z(\boldsymbol o) \subseteq Z(\boldsymbol v)$, so by Lemma \ref{lem:z=maximally0}, $\boldsymbol o$ is not maximally-0 in $\rsp{\bar V}$.
    \\\\
    Suppose that $\boldsymbol o\in M$. If $\boldsymbol o$ were not maximally-$0$ in $\rsp{\bar V}\cap O_+$, then we would have $\boldsymbol v\in \rsp{\bar V}\cap O_+\setminus\set{\bec 0}$ with with $Z(\bec o) \subsetneq Z(\bec v)$. Let $v_{\uparrow}$ be the maximum entry of $\boldsymbol v$ and $o_{\downarrow}$ be the minimum entry of $\boldsymbol o$. Then, $\boldsymbol o - \frac{o_{\downarrow}}{v_{\uparrow}}\boldsymbol v \geq \bec 0$, which is impossible as $\boldsymbol v$ cannot be in the span of $\boldsymbol o$ by the definition of $M$. Therefore, $\bec o$ is maximally-$0$ in $\rsp{\bar V}\cap O_+$ by contradiction.
    \\\\
    Next, it suffices to show that if $\bec o$ is maximally-$0$ in $\rsp{\bar V}\cap O_+$, then $\bec o$ is maximally-$0$ in $\rsp{\bar V}$. To see this, suppose not and that there exists some $\boldsymbol v\in \rsp{\bar V}\symbol{92}\set {\boldsymbol {0}}$, where $Z(\boldsymbol o)\subsetneq Z(\boldsymbol v)$. Then, $\boldsymbol h =\boldsymbol o - \frac{o_{\downarrow}}{v_{\uparrow}}\boldsymbol v \geq \bec 0$ and $Z(\boldsymbol v)\subseteq Z(\boldsymbol h)$. But $\bec h\in \rsp{\bar V}\cap O_+$ and we have $Z(\boldsymbol o)\subsetneq Z(\boldsymbol v)\subseteq Z(\boldsymbol h)$, which contradicts the fact that $\bec o$ is maximally-$0$ in $\rsp{\bar V}\cap O_+$.
\end{proof}
\begin{lemma}
    $O_1\cap M$ is a positive spanning set of $\rsp{\bar V} \cap O_+$.
\end{lemma}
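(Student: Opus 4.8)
The plan is to prove this by induction on the number of nonzero coordinates of a vector $\boldsymbol o \in \rsp{\bar V}\cap O_+$, showing that every such $\boldsymbol o$ is a finite non-negative combination of elements of $O_1\cap M$. The base case is $\boldsymbol o = \boldsymbol 0$, handled by the empty combination. For the inductive step, fix $\boldsymbol o \neq \boldsymbol 0$ and write $S = \{i : o_i > 0\}$ for its support. If $\boldsymbol o \in M$, then since $\boldsymbol o \in O_+$ its first nonzero entry is positive, and dividing $\boldsymbol o$ by that entry yields a vector $\boldsymbol b \in O_1\cap M$ (here one uses that $M$ is invariant under multiplication by positive scalars) with $\boldsymbol o = \lambda \boldsymbol b$ for some $\lambda > 0$; this is already a valid one-term combination.

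The substance is the case $\boldsymbol o \notin M$. By Lemma~\ref{lem:MequivMax0}, $\boldsymbol o$ is then not maximally-$0$ in $\rsp{\bar V}$, so there is $\boldsymbol w \in \rsp{\bar V}\setminus\set{\boldsymbol 0}$ with $Z(\boldsymbol o)\subsetneq Z(\boldsymbol w)$; equivalently, $\boldsymbol w$ is supported inside $S$ and vanishes at an extra coordinate. The key step is to use $\boldsymbol w$ to split $\boldsymbol o$ into non-negative-orthant vectors of strictly smaller support. Consider the closed interval $I = \set{t\in\R : \boldsymbol o + t\boldsymbol w \in O_+}$, which contains $0$. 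I would then split into two cases according to the sign pattern of $\boldsymbol w$. If $\boldsymbol w$ has entries of both signs, $I = [t_-, t_+]$ is bounded with $t_- < 0 < t_+$; the endpoint vectors $\boldsymbol o + t_\pm \boldsymbol w$ lie in $\rsp{\bar V}\cap O_+$, each has some coordinate of $S$ driven to $0$ (hence has fewer nonzero entries than $\boldsymbol o$), and $\boldsymbol o$ is the convex combination $\frac{-t_-}{t_+ - t_-}(\boldsymbol o + t_+\boldsymbol w) + \frac{t_+}{t_+ - t_-}(\boldsymbol o + t_-\boldsymbol w)$ with non-negative coefficients summing to $1$. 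If instead $\boldsymbol w$ is sign-definite, then after replacing $\boldsymbol w$ by $-\boldsymbol w$ we may assume $\boldsymbol w \ge \boldsymbol 0$; setting $t^* = \min_{i : w_i > 0} o_i/w_i > 0$, I would write $\boldsymbol o = (\boldsymbol o - t^*\boldsymbol w) + t^*\boldsymbol w$, where $\boldsymbol o - t^*\boldsymbol w \in \rsp{\bar V}\cap O_+$ has a new zero coordinate and $t^*\boldsymbol w \in \rsp{\bar V}\cap O_+$ is supported on $Z(\boldsymbol o)^c$'s proper subset, so both summands have support strictly contained in $S$.

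In either case $\boldsymbol o$ is a non-negative combination of at most two vectors of $\rsp{\bar V}\cap O_+$, each with strictly fewer nonzero coordinates than $\boldsymbol o$. Applying the induction hypothesis to each summand and concatenating the finitely many resulting combinations expresses $\boldsymbol o$ as a finite non-negative combination of elements of $O_1\cap M$, completing the induction and the proof.

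I expect the main obstacle to be the case analysis on the sign pattern of $\boldsymbol w$: the clean ``two endpoints of a segment'' convex combination only applies when $\boldsymbol w$ takes both signs, and the one-signed case must instead be handled by peeling off the non-negative multiple $t^*\boldsymbol w$ directly. Both branches must be checked to reduce the number of nonzero coordinates, which is exactly what makes the induction well-founded; since the support is a finite set the recursion terminates, and because $\rsp{\bar V}\cap O_+$ is pointed (it lies in $O_+$ and so contains no line) the recursion necessarily bottoms out at the extreme rays, i.e.\ at elements of $M$. This is precisely the standard fact that a pointed polyhedral cone is the conical hull of its extreme rays, here made self-contained through Lemma~\ref{lem:MequivMax0}.
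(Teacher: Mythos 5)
Your proof is correct, and it shares the paper's key ingredient---Lemma~\ref{lem:MequivMax0}, which converts ``not extremal'' into ``not maximally-$0$'' and hands you a witness $\boldsymbol w\in\rsp{\bar V}\setminus\set{\boldsymbol 0}$ with $Z(\boldsymbol o)\subsetneq Z(\boldsymbol w)$---but the descent step is organized differently. The paper argues by contradiction from a counterexample $\boldsymbol o$ whose zero set is maximal among counterexamples, and then performs a uniform two-stage subtraction: first $\boldsymbol w'=\boldsymbol o-r\boldsymbol v$ with $r$ the least positive ratio $o_i/v_i$, then $\boldsymbol o-s\boldsymbol w'$, writing $\boldsymbol o$ as a non-negative combination of two vectors with strictly larger zero sets; your version instead inducts on the support size and splits on the sign pattern of $\boldsymbol w$, using the two endpoints of the segment $\set{t:\boldsymbol o+t\boldsymbol w\in O_+}$ when $\boldsymbol w$ takes both signs and peeling off $t^*\boldsymbol w$ when $\boldsymbol w$ is one-signed. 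These are equivalent induction schemes (maximal-zero-set counterexample versus descending support size), but your explicit case analysis buys a small amount of rigor the paper elides: the paper's $r=\min\set{r_i:r_i>0}$ is only well-defined after tacitly replacing $\boldsymbol v$ by $-\boldsymbol v$ when $\boldsymbol v$ has no positive entry, a step your sign-definite branch makes explicit; conversely, the paper's single subtraction-then-subtraction pattern avoids your dichotomy entirely by first forcing the witness into $O_+$. Your closing observation that this is the finite-dimensional fact that a pointed polyhedral cone is the conical hull of its extreme rays correctly situates the lemma, and your one-term base cases ($\boldsymbol o=\boldsymbol 0$ via the empty combination, $\boldsymbol o\in M$ via positive rescaling into $O_1$, using that $M$ is closed under positive scalars) are all sound.
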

\begin{proof}
    For the sake of contradiction, suppose that $O_1\cap M$ is not a positive spanning set and consider $\boldsymbol o\in\rsp{\bar V} \cap O_+$ to be a maximally-$0$ vector not its positive span. As $\boldsymbol o$ is not in the span of any element of $O_1\cap M$, $\boldsymbol o$ is not maximally-$0$ in $\rsp{\bar V}$. Therefore, there exists $0\neq \boldsymbol v \in \rsp{\bar V}$ where $Z(\boldsymbol o)\subsetneq Z(\boldsymbol v)$. For all non-zero elements of $\boldsymbol v$, consider $r_i=\frac{o_i}{v_i}$ and let $r=\min\set{r_i:r_i>0}$. Then, $\boldsymbol w = \boldsymbol o - r \boldsymbol v\in \rsp{\bar V} \cap O_+$, so as $Z(\boldsymbol o) \subsetneq Z(\boldsymbol w)$, $\boldsymbol w$ is in the positive span of $B$. For $w_i\neq 0$, let $s_i=\frac{o_i}{w_i}$ and $s = \min \set{s_i}$. Then, $\boldsymbol o - s \boldsymbol w \in \rsp{\bar V} \cap O_+$ and $Z(\boldsymbol o)\subsetneq Z(\boldsymbol o - s \boldsymbol w)$. Therefore, $\boldsymbol o - s \boldsymbol w$ and $\boldsymbol w$ are both elements of the positive span of $O_1\cap M$, so $\boldsymbol o$ must also be an element of the positive span of $O_1\cap M$.
\end{proof}
\begin{theorem}
The hyperplane $\set{\bec x\in \R^n : \bec a \cdot \bec x = c}$ is a facet of the polytope whose vertices are the columns of $V$ if and only if $$\bar V^T \begin{pmatrix}
    \bec a\\
    -c
\end{pmatrix}\in M.$$
\label{thm:restatement}
\end{theorem}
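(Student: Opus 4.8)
The plan is to translate the algebraic condition $\bar V^T\begin{pmatrix}\bec a\\-c\end{pmatrix}\in M$ into a geometric statement about the polytope $P=\operatorname{conv}(\bec V_1,\dots,\bec V_N)$ (the convex hull of the columns of $V$, which by Lemma~\ref{lem:convex} is $C(\mathtt E)$), and then to match facets with the extremal vectors $M$. Write $\bec b:=\bar V^T\begin{pmatrix}\bec a\\-c\end{pmatrix}$. Since the $j$th column of $\bar V$ is $\begin{pmatrix}\bec V_j\\1\end{pmatrix}$, the $j$th entry of $\bec b$ is $b_j=\bec a\cdot\bec V_j-c$. Two observations follow at once: first, $\bec b\in O_+$ (all $b_j\ge 0$) holds exactly when every vertex satisfies $\bec a\cdot\bec V_j\ge c$, i.e.\ when $H=\set{\bec x:\bec a\cdot\bec x=c}$ is a supporting hyperplane of $P$ with $P$ contained in the halfspace $\bec a\cdot\bec x\ge c$; second, $b_j=0$ precisely when $\bec V_j\in H$, so the zero set $Z(\bec b)$ records which columns lie on $H$. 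Because $\bec b\in\rsp{\bar V}$ automatically, Lemma~\ref{lem:MequivMax0} reduces the membership $\bec b\in M$ to the two conditions that $\bec b\in O_+$ and that $\bec b$ is maximally-$0$ in $\rsp{\bar V}$. Thus, under the orientation built into $O_+$, it remains only to show that a supporting hyperplane $H$ is a facet if and only if its payout vector $\bec b$ is maximally-$0$.

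Next I would compute the dimension of the face $F=P\cap H$. By the first paragraph, $F=\operatorname{conv}\set{\bec V_j:j\in Z(\bec b)}$, so its dimension equals $\operatorname{rank}(\bar V_{Z(\bec b)})-1$, where $\bar V_J$ denotes the submatrix of $\bar V$ formed by the columns indexed by $J$; here the homogenizing row of $1$s in $\bar V$ is exactly what converts the affine dimension of a set of vertices into the rank of the corresponding columns minus one. The standing assumption that $\bar V$ has rank $n+1$ makes $P$ full-dimensional in $\R^n$, so $H$ is a facet iff $\dim F=n-1$, i.e.\ iff $\operatorname{rank}(\bar V_{Z(\bec b)})=n$. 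It therefore suffices to prove that, for $\bec b\in\rsp{\bar V}\cap O_+$ with $J:=Z(\bec b)$, the vector $\bec b$ is maximally-$0$ in $\rsp{\bar V}$ if and only if $\operatorname{rank}(\bar V_J)=n$.

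This last equivalence is the crux, and it is a short linear-algebra argument. Consider the subspace $W_J=\set{\bec w\in\rsp{\bar V}:w_j=0\text{ for all }j\in J}$. Via the map $\bec a\mapsto\bar V^T\bec a$, which is an isomorphism $\R^{n+1}\to\rsp{\bar V}$ because $\bar V$ has full row rank, $W_J$ corresponds to $\ker(\bar V_J^T)$, so $\dim W_J=(n+1)-\operatorname{rank}(\bar V_J)$; note $\bec b\in W_J$, whence $\dim W_J\ge 1$ and $\operatorname{rank}(\bar V_J)\le n$ always. If $\dim W_J=1$ then $W_J=\rsp{\bec b}$, so every $\bec w\in\rsp{\bar V}$ with $Z(\bec w)\supseteq J$ is a scalar multiple of $\bec b$ and hence satisfies $Z(\bec w)=J$ or $\bec w=\bec 0$; thus $\bec b$ is maximally-$0$. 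Conversely, if $\dim W_J\ge 2$, pick $\bec b'\in W_J$ independent from $\bec b$ and an index $j_0\notin J$ with $b_{j_0}\ne 0$; then $\bec w:=\bec b'-\tfrac{b'_{j_0}}{b_{j_0}}\bec b$ is a nonzero element of $\rsp{\bar V}$ with $Z(\bec w)\supseteq J\cup\set{j_0}\supsetneq J$, so $\bec b$ is not maximally-$0$. Hence $\bec b$ is maximally-$0$ iff $\dim W_J=1$ iff $\operatorname{rank}(\bar V_J)=n$, which by the previous paragraph is equivalent to $H$ being a facet. Combining the three paragraphs yields the theorem.

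I expect the main bookkeeping obstacle to be orientation: a facet hyperplane admits two sign-normalizations of $(\bec a,c)$, and only the one with $P\subseteq\set{\bec x:\bec a\cdot\bec x\ge c}$ produces $\bec b\in O_+\supseteq M$, so I would state explicitly that the equivalence is read with this normalization. The opposite sign yields a vector with a negative entry (as $\bec b\ne\bec 0$ for any genuine supporting hyperplane of the full-dimensional $P$), hence a vector not in $O_+$ and so not in $M$. The genuinely substantive step is the rank/kernel computation of the third paragraph; everything else is translation through Lemmas~\ref{lem:convex} and~\ref{lem:MequivMax0} together with the standard affine-dimension-equals-rank-minus-one fact.
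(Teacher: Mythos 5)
Your proof is correct, and it reaches the theorem by a genuinely different road than the paper for the crucial step. The paper's proof is essentially two lines: it invokes the characterization from \cite{Ziegler2000} that a hyperplane with $\bec a\cdot\bec v\ge c$ at every vertex defines a facet iff equality holds on a \emph{maximal} set of vertices, translates this directly into ``$\bec b\ge\bec 0$ and $\bec b$ maximally-$0$ in $\rsp{\bar V}$,'' and closes with Lemma~\ref{lem:MequivMax0} --- the same reduction of $M$-membership to nonnegativity plus maximal zero pattern that you use. Where you diverge is the bridge between ``facet'' and ``maximally-$0$'': instead of vertex-set maximality, you use the dimension characterization (a facet of the full-dimensional $P$ is a face of dimension $n-1$), compute $\dim(P\cap H)=\operatorname{rank}(\bar V_{Z(\bec b)})-1$ via homogenization, and then prove from scratch that for $\bec b\in\rsp{\bar V}\cap O_+$ with $J=Z(\bec b)$, maximal zero pattern is equivalent to $\dim\set{\bec w\in\rsp{\bar V}:w_j=0 \text{ for } j\in J}=1$, i.e.\ to $\operatorname{rank}(\bar V_J)=n$, using the isomorphism $\bec a\mapsto\bar V^T\bec a$ afforded by the standing rank-$(n+1)$ assumption; both directions of your kernel argument check out (the $\dim\ge 2$ direction correctly uses an index $j_0\notin J$ with $b_{j_0}\ne 0$, which exists since $\bec b\ne\bec 0$). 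What your route buys is self-containment: the paper leans on the Ziegler citation, and moreover its ``maximal subset'' is maximality among supporting hyperplanes (elements of $\rsp{\bar V}\cap O_+$) while ``maximally-$0$'' is over all of $\rsp{\bar V}$, a gap closed only inside the proof of Lemma~\ref{lem:MequivMax0}; you target maximality in $\rsp{\bar V}$ directly, so no such translation is needed. What the paper's route buys is brevity. Your explicit orientation caveat (only the normalization with $P\subseteq\set{\bec x:\bec a\cdot\bec x\ge c}$ gives $\bec b\in O_+$) makes precise a convention the paper's proof leaves implicit. The only cosmetic omission is that $P\cap H=\operatorname{conv}\set{\bec V_j:j\in Z(\bec b)}$ deserves its one-line justification, namely $\bec a\cdot\bigl(\sum_j\lambda_j\bec V_j\bigr)-c=\sum_j\lambda_j b_j$, which vanishes iff $\lambda_j=0$ whenever $b_j>0$.
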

\begin{proof}
    By \cite{Ziegler2000}, the hyperplane $\set{\bec x : \bec a \cdot \bec x = c}$ is a facet of a polytope $P$ if and only if for all vertices $\bec v$ of $P$, $\bec a \cdot \bec v \ge c$ with equality for a maximal subset of vertices. This happens if and only if the vector
    $$V^T \bec a \ge c\bec 1$$
    with equality for a maximal subset of the entries, which is true if and only if
    $$\bec b :=\bar V^T \begin{pmatrix}
        \bec a\\
        -c
    \end{pmatrix}\ge \bec 0$$
    and is maximally-$0$ among vectors in $\rsp{\bar V}$. By Lemma \ref{lem:MequivMax0}, this is equivalent to $\bec b\in M$.
\end{proof}
\begin{corollary}
There is a one to one correspondence between elements of $O_1 \cap M$ and facets of the polytope whose vertices are the columns of $V$. 
\end{corollary}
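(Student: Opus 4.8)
Write $P$ for the polytope whose vertices are the columns of $V$. The plan is to promote Theorem~\ref{thm:restatement} into a bijection by using the elements of $O_1 \cap M$ as canonical normalized representatives of the data $\binom{\bec a}{-c}$ defining each facet. Two preliminary observations make this work. First, since we have reduced to $\bar V$ of rank $n+1$, the map $\bar V^T : \R^{n+1} \to \R^N$ is injective; hence every $\bec b \in \rsp{\bar V}$ has a unique preimage $\binom{\bec a}{-c}$, and (provided $\bec a \neq \bec 0$) a unique associated hyperplane $\set{\bec x : \bec a \cdot \bec x = c}$. Second, $M$ is closed under multiplication by positive scalars: if $\bec b \in M$ and $\mu > 0$ then $\mu \bec b \in \rsp{\bar V}\cap O_+$ and the extremality condition transfers after dividing by $\mu$, so $\mu \bec b \in M$. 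Consequently $M \setminus \set{\bec 0}$ is a union of open rays, and $O_1$ selects from each ray its unique point whose first non-zero coordinate equals $1$.

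I would then define $\Phi : O_1 \cap M \to \set{\text{facets of } P}$ by sending $\bec b$ to $\set{\bec x : \bec a \cdot \bec x = c}$, where $\binom{\bec a}{-c}$ is the unique preimage of $\bec b$ under $\bar V^T$. The ``if'' direction of Theorem~\ref{thm:restatement} shows this hyperplane is a facet, so $\Phi$ is well-defined, once we check that $\bec a \neq \bec 0$. This is where the full-rank hypothesis is used: $\bec a = \bec 0$ would force $\bec b \in \rsp{\bec 1}$, and the only candidate in $O_1$ is $\bec 1$ itself; but linear independence of the rows of $\bar V$ gives some $\bec E_i \neq \bec 1$ with a zero coordinate, so $\bec 1$ is not maximally-$0$ in $\rsp{\bar V}$ and hence $\bec 1 \notin M$ by Lemma~\ref{lem:MequivMax0}.

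For injectivity, suppose $\Phi(\bec b) = \Phi(\bec b')$. A hyperplane determines its linear equation up to a nonzero scalar, so $\binom{\bec a'}{-c'} = \lambda\binom{\bec a}{-c}$ and therefore $\bec b' = \lambda \bec b$; positivity forces $\lambda > 0$, and since both $\bec b, \bec b'$ lie in $O_1$ their first non-zero entries equal $1$, giving $\lambda = 1$ and $\bec b = \bec b'$. For surjectivity, given a facet $F$ I would pick any representation $F = \set{\bec x : \bec a \cdot \bec x = c}$ and set $\bec b_0 = \bar V^T \binom{\bec a}{-c}$; the ``only if'' direction of Theorem~\ref{thm:restatement} gives $\bec b_0 \in M$, and $\bec b_0 \neq \bec 0$ by injectivity since $\bec a \neq \bec 0$. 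Dividing by the (positive) first non-zero entry of $\bec b_0$ yields $\bec b \in O_1 \cap M$ with $\Phi(\bec b) = F$.

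The routine parts are the two preliminary observations and the scaling bookkeeping in injectivity and surjectivity. I expect the only genuine subtlety to be the well-definedness check $\bec a \neq \bec 0$ — equivalently, ruling out $\bec 1 \in M$ — since this is the one place the correspondence could fail if the all-ones vector were extremal; the full-rank reduction made earlier in the section is exactly what prevents this.
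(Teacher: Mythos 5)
Your proof is correct and takes exactly the route the paper intends: the paper states this corollary without proof as an immediate consequence of Theorem~\ref{thm:restatement}, obtained just as you do by noting that $\bar V^T$ is injective under the rank-$(n+1)$ reduction, that $M$ is a union of positive rays, and that $O_1$ selects one representative per ray. Your additional checks---ruling out $\bec a = \bec 0$ by showing $\bec 1 \notin M$ via Lemma~\ref{lem:MequivMax0}, and the scaling bookkeeping for injectivity and surjectivity---are sound and fill in details the paper leaves implicit.
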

\noindent By \cite{Ziegler2000}, every non-redundant set of inequalities bounding a polytope $P$ has exactly one inequality for every face. This means that if a bookmaker verifies that they are not giving a Dutch book by checking a fixed set of inequalities, it suffices to check the inequalities $Q(\bec b)\ge 0$ for all $\bec b\in M\cap O_1$, and every non-redundant set of inequalities they must check consists of rescaled versions of these inequalities.
\section{Merging Individually Coherent Experts} \label{sec:indCohExp}
\noindent In this section we consider a setting in which each of several experts submits a set of internally coherent credences, but the union of all expert credences is possibly incoherent. To aggregate these credences into a single coherent set of beliefs, we could use a loss function of the form (\ref{eqn:lossfunction}), but that ignores the additional information that beliefs from the same expert are coherent. Since logical inferences can be made from an expert's stated credences, the expert can express the same information in multiple ways. As long as the set of possible inferences about an expert's beliefs is the same, the loss function should be invariant under the specific information the expert shares, a property that the method for separate beliefs previously discussed does not have if naively applied here. We call this property \textit{content invariance} and discuss it further below.
\\\\
For example, if a meteorologist says that there is a $40\%$ chance of rain tomorrow and a $70\%$ chance of clouds, it is reasonable to infer that there is a $30\%$ chance of sun and a $30\%$ chance of clouds without rain. In such a situation, the meteorologist might also explicitly say the chance of it being sunny, however, because that statement does not provide new information, the loss function should be invariant under whether or not they say it.
\\\\
Recall that $\bar V$ obtained by appending a row of $1$s to $V$ and that $\rsp{\bar V}$ is the row span of $\bar V$.
\begin{definition}
    Two coherent credence bases $(V, \bec q)$ and $(V', \bec q')$ are \textit{content equivalent} if $\rsp{\bar V}=\rsp{\bar V'}$ and the credence base $((V, V'), (\bec q, \bec q'))$ is coherent.
\end{definition}
\begin{lemma}
    If $(V, \bec q)$ and $(V', \bec q')$ are content equivalent and $\bec \pi$ is a probability vector, then
    $$V\bec \pi = \bec q \iff V'\bec \pi = \bec q'.$$
    \label{lem:piSameContent}
\end{lemma}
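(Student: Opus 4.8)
The plan is to prove the non-trivial implication $V\bec\pi=\bec q\implies V'\bec\pi=\bec q'$ and then invoke symmetry, since content equivalence is symmetric in the two credence bases: both the condition $\rsp{\bar V}=\rsp{\bar V'}$ and the coherence of the combined base $((V,V'),(\bec q,\bec q'))$ are unchanged under swapping the two. The key idea is that equality of row spans is the same as equality of null spaces, and that the appended all-ones row encodes the normalization of a probability vector as a linear condition.

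First I would use the hypothesis that $((V,V'),(\bec q,\bec q'))$ is coherent to obtain a single probability vector $\bec\pi_0$ satisfying $V\bec\pi_0=\bec q$ and $V'\bec\pi_0=\bec q'$ simultaneously. Assuming $\bec\pi$ is a probability vector with $V\bec\pi=\bec q$, I would set $\bec d=\bec\pi-\bec\pi_0$. Subtracting $V\bec\pi_0=\bec q$ from $V\bec\pi=\bec q$ gives $V\bec d=\bec 0$, and since $\bec\pi$ and $\bec\pi_0$ are both probability vectors, $\bec 1\cdot\bec d=1-1=0$. These two facts together say exactly that $\bar V\bec d=\bec 0$, i.e. $\bec d\in\ker\bar V$.

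Next I would apply the standard fact that the null space of a matrix is the orthogonal complement of its row span, so $\ker\bar V=\rsp{\bar V}^\perp$ and $\ker\bar V'=\rsp{\bar V'}^\perp$. By the definition of content equivalence $\rsp{\bar V}=\rsp{\bar V'}$, and taking orthogonal complements yields $\ker\bar V=\ker\bar V'$. Hence $\bec d\in\ker\bar V'$, so in particular $V'\bec d=\bec 0$ (reading off only the rows coming from $V'$), which gives $V'\bec\pi=V'\bec\pi_0=\bec q'$, as desired.

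I do not expect a serious obstacle here, as the argument reduces to one elementary linear-algebra identity once it is set up. The only subtle point is recognizing why the \emph{barred} matrices are essential: it is the all-ones row that turns the normalization $\bec 1\cdot\bec\pi=1$ into membership in $\ker\bar V$ rather than the larger $\ker V$, and it is the coherence of the combined base that supplies the common witness $\bec\pi_0$ against which the difference $\bec d$ is measured. Without both of these ingredients, a probability vector with $V\bec\pi=\bec q$ need not satisfy $V'\bec\pi=\bec q'$.
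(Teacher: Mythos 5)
Your proof is correct and matches the paper's argument essentially step for step: the same common witness $\bec\pi_0$ (the paper's $\bec{\tilde\pi}$) obtained from coherence of the combined base, the same identity $\ker(\bar V)=\rsp{\bar V}^\perp$ yielding $\ker(\bar V)=\ker(\bar V')$, and the same use of the all-ones row to encode $\bec 1\cdot(\bec\pi-\bec\pi_0)=0$. The only cosmetic difference is that you prove one implication and invoke symmetry, whereas the paper writes both directions as a single chain of equivalences.
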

\begin{proof}
    Since $\ker(\bar V) = \rsp{\bar V}^\perp$, the orthogonal space to the row span of $\bar V$, if $\rsp{\bar V}=\rsp{\bar V'}$, then $\ker(\bar V)=\ker(\bar V')$. By the coherence of $((V, V'), (\bec q, \bec q'))$, there is some $\bec{\tilde\pi}$ so that $V\bec{\tilde\pi}=\bec q$ and $V'\bec{\tilde\pi}=\bec q'$. Then, for any probability vector $\bec \pi$, as $\bec\pi -\bec{\tilde\pi}\in \ker(\bec 1^T)$,
    $$V\bec \pi = \bec q \iff \bec \pi - \tilde{\bec \pi}\in \ker(\bar V)\iff \bec \pi - \tilde{\bec \pi}\in \ker(\bar V')\iff V'\bec \pi = \bec q'.\qedhere$$
    
\end{proof}
\noindent Lemma \ref{lem:piSameContent} implies that content equivalence is transitive, so is an equivalence relation. 
\begin{definition}
    Let $\phi$ be a function of a credence base. $\phi$ is \textit{content invariant} if for content equivalent credence bases $\mathcal{Q}$ and $\mathcal{Q'}$ we have $\phi(\mathcal{Q})=\phi(\mathcal{Q}')$.
\end{definition}
\subsection{Content Invariance}\label{subsec:contentInvariance}
Suppose that $(V, \bec q)$ is a coherent credence base where the $n+1$ rows of $\bar V$ are linearly independent. Let $I=\rsp{\bar V}\cap \set{0, 1}^N$, which the next lemma shows is the set of events whose probabilities are linearly inferable.
\begin{lemma}
    Consider the statements
    \begin{enro}
        \item $\bec E\in I$.
        \item There is a unique belief $q$ so that the credence base $((\bec E, V), (q, \bec q))$ is coherent.
    \end{enro}
    (i) implies (ii). Moreover, if there is a probability vector $\bec \pi$ so that $V \bec \pi = \bec q$ where none of the entries of $\bec \pi$ are $0$ or $1$, then (ii) implies (i).
\end{lemma}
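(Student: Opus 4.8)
The plan is to translate everything into the convex-geometry language of Lemma~\ref{lem:convex}. By that lemma, the augmented credence base $((\bec E, V), (q, \bec q))$ is coherent if and only if there is a probability vector $\bec\pi$ with $V\bec\pi = \bec q$ and $\bec E\cdot \bec\pi = q$; equivalently, writing $\bec{\bar q} = (\bec q, 1)$, with $\bar V \bec\pi = \bec{\bar q}$ and $\bec E\cdot\bec\pi = q$. Let $F = \set{\bec\pi \in \R^N_{\ge 0} : \bar V\bec\pi = \bec{\bar q}}$ be the feasible set, which is nonempty since $(V,\bec q)$ is coherent, and is compact and convex. The set of beliefs $q$ for which the augmented base is coherent is exactly the image $\set{\bec E\cdot\bec\pi : \bec\pi\in F}$, which, being the image of a convex set under a linear functional, is an interval. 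Hence statement (ii) — that this set is a single point — holds if and only if $\bec E\cdot\bec\pi$ is constant as $\bec\pi$ ranges over $F$. This reformulation is the crux both implications will exploit.

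For (i)$\Rightarrow$(ii), I would write $\bec E\in I\subseteq\rsp{\bar V}$ as $\bec E = \bar V^T\bec a$ for some $\bec a\in\R^{n+1}$. Then for every $\bec\pi\in F$,
\[
\bec E\cdot\bec\pi = (\bar V^T\bec a)\cdot\bec\pi = \bec a\cdot(\bar V\bec\pi) = \bec a\cdot\bec{\bar q},
\]
which does not depend on $\bec\pi$. Thus $\bec E\cdot\bec\pi$ is constant on $F$, and by the reformulation there is a unique coherent $q$ (namely $q = \bec a\cdot\bec{\bar q}$), giving (ii).

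For the converse under the stated hypothesis, suppose $\bec\pi^\circ\in F$ has all entries strictly in $(0,1)$. The key observation is that the tangent directions available at an interior feasible point are exactly $\ker\bar V$, which (as already used in the proof of Lemma~\ref{lem:piSameContent}) equals $\rsp{\bar V}^\perp$. Concretely, for any $\bec d\in\ker\bar V$ and all sufficiently small $t$, the vector $\bec\pi^\circ + t\bec d$ still has nonnegative entries and satisfies $\bar V(\bec\pi^\circ + t\bec d) = \bec{\bar q}$, so it lies in $F$. Statement (ii), via the reformulation, forces $\bec E\cdot(\bec\pi^\circ + t\bec d) = \bec E\cdot\bec\pi^\circ$ for all small $t$, hence $\bec E\cdot\bec d = 0$. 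Since $\bec d\in\ker\bar V = \rsp{\bar V}^\perp$ was arbitrary, $\bec E$ is orthogonal to $\rsp{\bar V}^\perp$, i.e.\ $\bec E\in\rsp{\bar V}$; as $\bec E\in\set{0,1}^N$ already, $\bec E\in I$, which is (i).

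The main obstacle is the converse direction, and specifically why the interior hypothesis cannot be dropped: it is precisely what guarantees that \emph{every} kernel direction $\bec d$ can be followed in both signs without leaving $[0,1]^N$, so that constancy of $\bec E\cdot\bec\pi$ on $F$ upgrades to the linear condition $\bec E\cdot\bec d = 0$ on all of $\ker\bar V$. If $\bec\pi$ sits on the boundary of the simplex, feasible perturbations are confined to a cone rather than a subspace, and $\bec E\cdot\bec\pi$ could be constant on $F$ without $\bec E$ lying in $\rsp{\bar V}$; so the plan must foreground the interior point and the open-interval argument rather than attempt the converse in full generality.
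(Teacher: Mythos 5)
Your proof is correct and takes essentially the same route as the paper: for (i)$\Rightarrow$(ii) both arguments write $\bec E = \bar V^T \bec a$ so that $\bec E\cdot\bec\pi = \bec a\cdot\bec{\bar q}$ is the same for every feasible probability vector, and for the converse both perturb the strictly interior $\bec\pi$ along kernel directions of $\bar V$, using interiority to move in both signs. The only packaging difference is that you argue the converse directly, concluding $\bec E\in(\ker\bar V)^\perp=\rsp{\bar V}$, whereas the paper takes the contrapositive and exhibits a single kernel vector $\bec v\in\ker(\bar V)\setminus\ker(\bar V')$ producing a continuum of distinct coherent beliefs $q$.
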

\begin{proof}
    Suppose (i). The belief $q=\bec E \cdot \bec \pi$ makes $((\bec E, V), (q, \bec q))$ coherent as $(V, \bec q)$ is coherent. For uniqueness, suppose we there are two coherent beliefs $q, q'$ about some event for $\bec E\in I$. Then as there is some $\bec a\in \R^n$ where $V^T \bec a = \bec E$, there are two probability vectors $\bec \pi$ and $\bec \pi'$ where $q = (V^t \bec a) \cdot \bec \pi$ and $q' = (V^t \bec a) \cdot \bec \pi'$ with $\bec q = V \bec \pi = V \bec \pi'$. This means that
    $$q=(V^t \bec a) \cdot \bec \pi = \bec a^T V \bec\pi = \bec a^T \bec q = \bec a^T V \bec\pi' = q'.$$
    Suppose not (i) and let $\bar V'$ be the matrix $\bar V$ with the vector $\bec E$ appended as the last row. Then, there is a vector $\bec v\in \ker(\bar V)\symbol{92}\ker(\bar V')$. By the assumption that no entries of $\bec \pi$ are $0$ or $1$, there is an $\epsilon>0$ so that for all $|a|<\epsilon$, $\bec \pi + a\bec v$ is also a probability vector. Then, for all such $a$, taking $q=\bec E \cdot (\bec \pi + a\bec v)$ makes $((\bec E, V), (q, \bec q))$ coherent. As $\bec v \in \ker(\bar V)\symbol{92}\ker(\bar V')$, we have $\bec E\cdot \bec v\neq 0$, so each distinct value of $a$ yields a distinct $q$.
\end{proof}
\begin{lemma}
    Let $R$ be the reduced row-echelon form of $\bar V$. Then,
    $$I=\set{R^T \boldsymbol v:\boldsymbol v \in \set{0, 1}^{n+1}, R^T \boldsymbol v \in \set{0, 1}^N}.$$
\end{lemma}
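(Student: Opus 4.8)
The plan is to reduce the claimed set equality to one observation about the pivot structure of the reduced row-echelon form. First I would record that elementary row operations preserve the row span, so $\rsp{R}=\rsp{\bar V}$; and since the rows of $R$ are exactly the columns of $R^T$, the row span is precisely $\set{R^T\bec v : \bec v\in\R^{n+1}}$. Combining this with the definition $I=\rsp{\bar V}\cap\set{0,1}^N$, I can rewrite $I=\set{R^T\bec v : \bec v\in\R^{n+1},\ R^T\bec v\in\set{0,1}^N}$. The lemma then amounts to showing that in this description the quantifier $\bec v\in\R^{n+1}$ may be replaced by $\bec v\in\set{0,1}^{n+1}$.

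The inclusion $\supseteq$ is immediate, since $\set{0,1}^{n+1}\subseteq\R^{n+1}$: any vector $R^T\bec v$ with $\bec v\in\set{0,1}^{n+1}$ and $R^T\bec v\in\set{0,1}^N$ already lies in $I$. So the work is the reverse inclusion.

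For $\subseteq$, I would fix $\bec E\in I$ and write $\bec E=R^T\bec v$; because $\bar V$ (hence $R$) has rank $n+1$, the map $R^T$ is injective, so this $\bec v\in\R^{n+1}$ is unique. The key step is to read off the coordinates of $\bec v$ from the pivot columns of $R$. Since $R$ is in reduced row-echelon form with $n+1$ independent rows, each row $i$ carries a pivot in some column $c_i$, and by the defining property of RREF that column of $R$ equals the standard basis vector $\bec e_i\in\R^{n+1}$. Hence the $c_i$-th coordinate of $\bec E=R^T\bec v$ is the dot product of the $c_i$-th column of $R$ with $\bec v$, namely $\bec e_i\cdot\bec v=v_i$. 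As $\bec E\in\set{0,1}^N$, this coordinate lies in $\set{0,1}$, so $v_i\in\set{0,1}$ for every $i$, giving $\bec v\in\set{0,1}^{n+1}$ and exhibiting $\bec E$ in the right-hand set.

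The argument is essentially self-contained, and the single point deserving care — the main conceptual step — is the identification $v_i=E_{c_i}$ via the pivot columns, which is exactly what forces the a priori real coefficients $v_i$ to be $0/1$. I would also verify that $R$ genuinely has a pivot in each of its $n+1$ rows (no zero rows), which follows from the standing assumption that the rows of $\bar V$ are linearly independent.
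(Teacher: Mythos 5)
Your proof is correct and follows essentially the same route as the paper's: both arguments rewrite $I$ via the row span of $R$ and then use the pivot columns of the reduced row-echelon form (each pivot column of $R$ being a standard basis vector) to read off the coordinates $v_i$ as entries of $R^T\bec v$, forcing $\bec v\in\set{0,1}^{n+1}$. Your version merely spells out details the paper leaves implicit, such as injectivity of $R^T$ and the absence of zero rows under the standing rank-$(n+1)$ assumption.
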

\begin{proof}
    Firstly,
$$\set{R^T \boldsymbol v:\boldsymbol v \in \set{0, 1}^{n+1}, R^T \boldsymbol v \in \set{0, 1}^N}\subseteq \rsp{\bar V}\cap\set{0, 1}^N=I.$$
Note that all of $R^T$'s columns have a leading $1$ which is the only entry in its row, so each of $\boldsymbol v$'s entries appear somewhere in $R^T\boldsymbol v$. Therefore, if $R^T\boldsymbol v \in \set{0, 1}^N$, then $\boldsymbol v \in \set{0, 1}^{n+1}$.
\end{proof}
\begin{corollary}
    The maximum size of $I$ is $2^{n+1}$.
\end{corollary}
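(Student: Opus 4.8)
The plan is to read the bound off directly from the displayed description of $I$ in the preceding lemma, so that essentially no new work is required. That lemma identifies
$$I = \set{R^T \boldsymbol v : \boldsymbol v \in \set{0,1}^{n+1},\ R^T \boldsymbol v \in \set{0,1}^N},$$
which exhibits $I$ as the image of a subset of $\set{0,1}^{n+1}$ under the map $\boldsymbol v \mapsto R^T \boldsymbol v$. Since the image of a finite set under any function has cardinality at most that of the set, and $\set{0,1}^{n+1}$ has $2^{n+1}$ elements, I would conclude immediately that $I$ has at most $2^{n+1}$ elements.

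To justify the word \emph{maximum} rather than merely \emph{upper bound}, I would then exhibit a credence base attaining equality. The natural candidate is the partition scenario augmented by a single null atom: take $\Omega = \set{\omega_1, \dots, \omega_{n+1}}$ with $E_i = \set{\omega_i}$ for $1 \le i \le n$, so that $V = (I_n \mid \boldsymbol 0)$. Appending the row of $1$s and row-reducing, the all-ones row minus the first $n$ rows yields the last standard basis vector, so $\bar V$ reduces to $R = I_{n+1}$. Then $\rsp{\bar V} = \R^{n+1}$, hence $I = \R^{n+1} \cap \set{0,1}^{n+1} = \set{0,1}^{n+1}$, which has exactly $2^{n+1}$ elements.

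There is no real obstacle here: the content lives entirely in the preceding lemma, and the corollary is a one-line cardinality observation together with an example. The only point worth a moment's care is that for the \emph{exact} count one wants the map $\boldsymbol v \mapsto R^T \boldsymbol v$ to be injective, so that $|I|$ equals the number of $\boldsymbol v \in \set{0,1}^{n+1}$ with $R^T \boldsymbol v \in \set{0,1}^N$; this is furnished by the lemma's observation that every coordinate of $\boldsymbol v$ reappears (at a pivot position) as a coordinate of $R^T \boldsymbol v$. For the corollary as stated, however, only the crude bound on the size of an image is needed.
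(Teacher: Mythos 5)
Your proposal is correct and matches the paper's (implicit) argument: the corollary is stated without proof precisely because the preceding lemma exhibits $I$ as the image of a subset of $\set{0,1}^{n+1}$ under $\boldsymbol v \mapsto R^T\boldsymbol v$, which immediately bounds $\#I$ by $2^{n+1}$. Your partition-plus-null-atom example (where $\bar V$ row-reduces to $I_{n+1}$, so $I=\set{0,1}^{n+1}$) goes beyond the paper by actually justifying the word \emph{maximum}, and it checks out, including the standing assumption that the $n+1$ rows of $\bar V$ be linearly independent.
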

\noindent Because $I$ is defined only in terms of the row span $\rsp{\bar V}$, it is content invariant. However, $I$ tends to be quite large even for well-tamed events, so a smaller, content invariant set might be desirable.
\\\\
Let $B\subseteq I$ be a minimal positive spanning set of $I$.
\begin{lemma}
    Any maximally-$0$ element of $I$ must be part of $B$.
\end{lemma}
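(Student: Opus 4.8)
The plan is to argue by contradiction, exploiting the fact that a $0,1$-vector is completely determined by its set of zero coordinates. Suppose $\bec b$ is a nonzero maximally-$0$ element of $I$ with $\bec b \notin B$. (The zero vector is represented by the empty combination, so it need never belong to $B$; we therefore treat only nonzero maximally-$0$ vectors.) Since $B$ is a positive spanning set of $I$, I would write
$$\bec b = \sum_{s \in S} c_s \bec b_s$$
for a finite index set $S$, coefficients $c_s > 0$, and vectors $\bec b_s \in B \subseteq I$. Because $\bec b \ne \bec 0$, the set $S$ is nonempty.

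Next I would compare zero sets. Fix any $s \in S$ and any coordinate $i \in Z(\bec b)$. Then $0 = b_i = \sum_{t \in S} c_t (\bec b_t)_i$, and since every $\bec b_t$ lies in $I \subseteq \set{0, 1}^N$ and every $c_t > 0$, each summand is non-negative; hence $(\bec b_s)_i = 0$. This shows $Z(\bec b) \subseteq Z(\bec b_s)$ for every $s \in S$.

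The key step is to invoke maximality. Each $\bec b_s$ is an element of $I \setminus \set{\bec 0}$ with $Z(\bec b) \subseteq Z(\bec b_s)$; since $\bec b$ is maximally-$0$ in $I$, the inclusion cannot be strict, so $Z(\bec b_s) = Z(\bec b)$. Because $\bec b_s$ and $\bec b$ are both $0,1$-vectors with identical zero sets, they have identical supports and therefore $\bec b_s = \bec b$ for every $s \in S$. But then $\bec b = \bec b_s \in B$, contradicting $\bec b \notin B$.

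The main obstacle---really the single idea doing the work---is recognizing that the rigidity of $0,1$-vectors collapses the scaling arguments needed in the general non-negative setting (as in Lemma \ref{lem:MequivMax0} and the earlier positive-spanning lemma giving $O_1 \cap M \subseteq B$): once two $0,1$-vectors share a zero set they are equal, so no $\lambda$-rescaling is required. I would double-check only the edge cases: that a zero summand would contribute nothing and so is harmless, and that $S \neq \emptyset$, which holds precisely because $\bec b \neq \bec 0$. Note that minimality of $B$ is not actually needed for this direction; every positive spanning set contained in $I$ already contains all nonzero maximally-$0$ elements.
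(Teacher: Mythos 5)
Your proof is correct and follows essentially the same route as the paper's: express the maximally-$0$ vector as a positive combination over $B$, deduce the zero-set inclusion $Z(\bec b)\subseteq Z(\bec b_s)$ from non-negativity, invoke maximality to force equality of zero sets, and use the rigidity of $0,1$-vectors to conclude $\bec b = \bec b_s \in B$. If anything, your write-up is slightly more careful than the paper's, which implicitly excludes the zero vector and states the zero-set inclusion in the wrong direction ($Z(\bec b)\subseteq Z(\bec v)$ where it should be $Z(\bec v)\subseteq Z(\bec b)$), both points you handle explicitly.
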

\begin{proof}
    Suppose that $\bec v\in I$ is maximally-$0$ in $I$. As $B$ is a positive spanning set, we can write
    $$\bec v = \sum_{\bec b\in B} c_{\bec b} \bec b.$$
    As $\bec v\ne \bec 0$, there is some $\bec b\in B$ where $c_{\bec b}\ne \bec 0$, so $\bec b \le \bec v$ and $Z(\bec b)\subseteq Z(\bec v)$. As $\bec v$ is maximally-$0$, $Z(\bec b)= Z(\bec v)$ which means that $\bec v = \bec b$ as both are $0,1$-vectors.
\end{proof}
\begin{theorem}
    \label{thm:Bmax0}$B$ is the set of maximally-$0$ elements of $I$.
\end{theorem}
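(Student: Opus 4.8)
The plan is to prove the two inclusions separately and then close the argument using the minimality of $B$. Let $M_0$ denote the set of maximally-$0$ elements of $I$. One inclusion, $M_0\subseteq B$, is exactly the content of the immediately preceding lemma, so that direction comes for free. It remains to upgrade this to equality, and the natural route is to first establish the auxiliary claim that $M_0$ is itself a positive spanning set of $I$, after which minimality of $B$ forces $B=M_0$.

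To prove the auxiliary claim I would exploit the fact that every element of $I$ is a $0,1$-vector. The key observation is that for two $0,1$-vectors $\bec o,\bec w$ the condition $Z(\bec o)\subsetneq Z(\bec w)$ is equivalent to $\bec w\le \bec o$ entrywise together with $\bec w\neq \bec o$; in particular the support of $\bec w$ is contained in that of $\bec o$. Consequently $\bec o-\bec w$ is again a $0,1$-vector, it lies in $\rsp{\bar V}$ since both summands do, hence $\bec o-\bec w\in I$, and it satisfies $Z(\bec o)\subsetneq Z(\bec o-\bec w)$. So whenever $\bec o\in I$ fails to be maximally-$0$, witnessed by some nonzero $\bec w\in I$ with $Z(\bec o)\subsetneq Z(\bec w)$, we may write $\bec o=\bec w+(\bec o-\bec w)$ as a sum of two elements of $I$, each nonzero and each having strictly more zero entries than $\bec o$. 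Running this as a strong induction on the number of nonzero entries of $\bec o$ (a nonzero vector with the fewest ones cannot be so decomposed, hence must already be maximally-$0$), every element of $I$ is expressible as a nonnegative, indeed unit-coefficient, combination of elements of $M_0$. This shows $M_0$ positively spans $I$.

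With the claim in hand the theorem follows at once: $M_0\subseteq B$ by the preceding lemma, and $M_0$ is a positive spanning set of $I$, so minimality of $B$ forbids any proper subset of $B$ from positively spanning $I$, forcing $B=M_0$. This simultaneously yields uniqueness of the minimal positive spanning set and identifies it with the set of maximally-$0$ elements.

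I expect the main obstacle to be the auxiliary spanning claim, and specifically the justification of the decomposition step. The delicate point is verifying that $\bec o-\bec w$ remains \emph{inside} $I$, i.e.\ that it stays a $0,1$-vector rather than merely a nonnegative vector; this is precisely where restricting to $0,1$-vectors, rather than the general cone elements treated in the earlier analysis of $M$, does the work, and it is what lets us use unit coefficients instead of arbitrary nonnegative scalings. The only other thing to check is that the induction is well-founded, which holds because each decomposition strictly increases the number of zero entries.
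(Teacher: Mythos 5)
Your proof is correct and takes essentially the same route as the paper's: both hinge on decomposing a non-maximally-$0$ element $\bec v\in I$ as $\bec v=\bec w+(\bec v-\bec w)$, two nonzero elements of $I$ with strictly larger zero sets, and then invoke the preceding lemma together with minimality of $B$; the paper merely packages your strong induction on the number of nonzero entries as the choice of a maximally-$0$ counterexample outside the positive span. Your explicit verification that $\bec v-\bec w$ remains a $0,1$-vector (hence stays in $I$) is a detail the paper leaves implicit, and it is indeed the point where the argument uses that $I$ consists of $0,1$-vectors.
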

\begin{proof}
    It suffices to show that the maximally-$0$ elements of $I$ are a positive spanning set. Suppose that this were not the case. Let $\boldsymbol v$ be a maximally-$0$ vector in $I\symbol{92}\rsp{B}_+$, the maximal set positively spanned by $B$. Note that $\boldsymbol v$ is not maximally-$0$ in $I$ or it would be a member of $B$. Then, there exists $\boldsymbol w\in I\symbol{92}\set{0}$ where $Z(\boldsymbol v)\subsetneq Z(\boldsymbol w)$. Note that $Z(\boldsymbol v)\subsetneq Z(\boldsymbol v-\bec w)$, meaning $\boldsymbol w$ and $\boldsymbol v-\boldsymbol w$ are both in the positive span of $B$, so $\boldsymbol v$ is also in the positive span of $B$.
\end{proof}
\begin{lemma}
    Let $Q:I \rightarrow [0, 1]$ be the implied belief function, taking an event $\bec E$ to the unique belief $q$ that makes the credence base $((\bec E, V), (q, \bec q))$ coherent. The function $Q$ as well as the sets $I$ and $B$ are content invariant. \label{lem:I_content_invariant}
\end{lemma}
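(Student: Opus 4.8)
The plan is to treat the three objects $I$, $B$, and $Q$ separately, observing that each is ultimately determined by data that content equivalence preserves. Recall that content equivalence of $(V, \bec q)$ and $(V', \bec q')$ means exactly that $\rsp{\bar V} = \rsp{\bar V'}$ together with coherence of the combined base $((V, V'), (\bec q, \bec q'))$.

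First I would dispatch $I$ and $B$, which require almost nothing. Since $I = \rsp{\bar V} \cap \set{0,1}^N$ is defined purely in terms of the row span $\rsp{\bar V}$, and content equivalent bases have equal row spans by definition, we get $I = I'$ immediately. For $B$, I would invoke Theorem \ref{thm:Bmax0}, which identifies $B$ as the set of maximally-$0$ elements of $I$. The property of being maximally-$0$ in $I$ refers only to the set $I$ (through the zero-pattern map $Z$), so once $I$ is content invariant the maximally-$0$ elements coincide for both bases; hence $B = B'$.

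The substantive step is content invariance of $Q$. Its domain is already content invariant since $I = I'$, so I only need to check that the implied belief $Q(\bec E)$ takes the same value under both bases for each fixed $\bec E \in I$. I would choose a probability vector $\bec \pi$ with $V \bec \pi = \bec q$, which exists because $(V, \bec q)$ is coherent. The key move is Lemma \ref{lem:piSameContent}: content equivalence forces $V' \bec \pi = \bec q'$ for this \emph{same} $\bec \pi$. The earlier lemma characterizing implied beliefs shows that the unique $q$ making $((\bec E, V), (q, \bec q))$ coherent is $q = \bec E \cdot \bec \pi$, and likewise the unique implied belief under $(V', \bec q')$ is $\bec E \cdot \bec \pi$ --- the very same number, computed from the very same $\bec \pi$. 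Therefore $Q_{(V, \bec q)}(\bec E) = Q_{(V', \bec q')}(\bec E)$.

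The one point requiring care is precisely this appeal to Lemma \ref{lem:piSameContent}: it is what guarantees that a single probability vector $\bec \pi$ simultaneously witnesses $V \bec \pi = \bec q$ and $V' \bec \pi = \bec q'$, which is exactly what collapses the two a priori distinct implied-belief computations onto the common value $\bec E \cdot \bec \pi$. Once that shared witness is in hand, the well-definedness of $Q$ established earlier finishes the argument with no further calculation.
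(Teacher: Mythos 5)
Your proposal is correct and follows essentially the same route as the paper's proof: $I$ is invariant because content equivalence fixes $\rsp{\bar V}$, $B$ because it is determined by $I$, and $Q$ via the shared witness $\bec \pi$ from Lemma \ref{lem:piSameContent} certifying coherence of $((\bec E, V'), (q, \bec q'))$, with uniqueness closing the argument. Your explicit appeal to Theorem \ref{thm:Bmax0} to identify $B$ as the maximally-$0$ elements of $I$ is a slightly more careful rendering of the paper's bare assertion that $B$ is ``defined solely in terms of $I$,'' since it settles well-definedness of the minimal positive spanning set, but it is not a different argument.
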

\begin{proof}
    Let $(V, \bec q)$ and $(V', \bec q')$ be two content equivalent coherent credence bases with set of inferable events $I=\rsp{\bar V} \cap \set{0, 1}^N$ and $I' = \rsp{\bar V'} \cap \set{0, 1}^N$, positive bases $B$ and $B'$ of $I$ and $I'$, and implied belief functions $Q$ and $Q'$. Then, by definition, $\rsp{\bar V} = \rsp{\bar V'}$, so $I=\rsp{\bar V} \cap \set{0, 1}^N=\rsp{\bar V'} \cap \set{0, 1}^N=I'$ is content invariant. Also, because $B$ and $B'$ are defined solely in terms of $I$ and $I'$ respectively, $B=B'$.
    \\\\
    To show that $Q$ is content invariant, if $Q(\bec E) = q$, then by coherence, there is a probability vector $\bec \pi$ so that $V\bec \pi = \bec q$ and $\bec E \cdot \bec \pi = q$. By Lemma \ref{lem:piSameContent}, we have $V' \bec \pi = \bec q'$, meaning that $\bec \pi$ certifies that $((\bec E, V'), (q, \bec q'))$ is coherent, so by uniqueness, $Q'(\bec E) = q$.
\end{proof}
    
\subsection{Loss Functions with Content Invariance}
Suppose that there are $k$ experts, the $i$th of which tells us the coherent credence base of their beliefs $\mathcal E_i = (V_i, \bec q_i)$,  with event matrix $V_i$ and belief vector $\bec q_i$ defined previously. Let $\mathcal Q$ be the combined credence base of all experts, formed by concatenating $V_1, \dots, V_k$ and $\bec q_1, \dots, \bec q_k$. We will consider loss functions of the form
$$L(\bec p, \mathcal{Q})=\sum_{i=1}^{k} \mathcal D_i(\bec p, \mathcal{E}_i)$$
for some content invariant measure of disagreement $\mathcal D_i$. In contrast to Section \ref{sec:seperateBeliefs}, the sum is over experts rather than over individual events. 
As before,
$$L^*(\mathcal{Q})=\min_{\bec p \in C(\mathtt E)}L( \mathcal{Q})=L(\bec{p^*}, \mathcal Q)$$
is a measure of the incoherence of the set of experts as a group.
\\\\
For the $i$th expert, let $I_i=\rsp{\bar V_i} \cap \set{0, 1}^N$ be the set of events for which we can infer a probability from expert $i$'s stated beliefs, let $Q_i:I_i\rightarrow [0, 1]$ be this inferred probability, and let $B_i$ be the minimal positive spanning set of $I_i$, as defined in Section \ref{subsec:contentInvariance}. Also, write $P(\bec E)$ be the entry of $\bec p$ corresponding to event $E$.
\\\\
As $Q_i$ is content invariant one approach is to let
\begin{equation}
    \mathcal D_i = D_{i, S_i} := \frac{1}{\#S_i} \sum_{\bec b\in S_i} \ell(P(\bec b), Q_i(\bec b))\label{eqn:contentinvariantloss}
\end{equation}
where $S_i\subseteq I_i$ is a content invariant set of events and $\ell$ is a dissimilarity function as in Section \ref{sec:seperateBeliefs}. As $I_i$ and $B_i$ are both content invariant, they are both choices for $S_i$.
\\\\
One possible justification for summing over $B_i$ instead of $I_i$ is that $\#I_i$ can be much larger than $\#B_i$. $B_i$ contains the maximally-$0$ elements of $I_i$, or the elements whose probabilities are smallest. As the dissimilarity functions $f$ and $f^o$ punish more for the same error when $q$ is smaller ($f(0.101, 0.001)>f(0.6, 0.5)$), summing over $B_i$ is summing over the elements of $I_i$ whose relative errors will be greatest while not considering the other terms for ease of computation.
\\\\
The normalizing coefficient $\frac{1}{\#S}$ is included in the loss function in order to give each expert equal weight, regardless of how many beliefs they express. If the measures of disagreement were not normalized, then an expert might become twice as influential on the overall loss function by expressing one additional belief because $\#I_i$ and $\#B_i$ can grow exponentially with the number of beliefs expressed.
\subsection{Removing the Symmetry between an Event and its Complement}
\noindent Looking at the dissimilarity function $f$, there are two terms: $p\ln \frac pq$ representing the expected surprise from the event occurring and $(1-p)\ln \frac{1-p}{1-q}$, representing the expected surprise from the event not occurring. However, if we sum the dissimilarity function over a minimal positive basis, then being surprised by the complement of an event happening is redundant; when one event does not happen, we are surprised instead by other events happening. Alternatively, an agent observing the world might only keep track of which events do happen, so cannot be surprised by an event not happening.
\\\\
This motivates the removal the term representing the surprise of an event not happening from the loss function. However, one needs to be careful in doing this: so far, we have required that loss functions $L(\bec p, \bec q)$ be non-negative and equal to $0$ if and only if $\bec p = \bec q$. For example, $p\ln \frac pq$ by itself is not a dissimilarity function, so the corresponding ``loss function" $L$ does not have this property in general. The following lemmas motivates two possible content invariant disagreement functions that have this property:
\begin{lemma}
    Let $(V, \bec p)$ and $(V, \bec q)$ be coherent credence bases and suppose that $V^T \bec 1=k\bec 1$ for some $k\in \mathbb N$. Then,
    $$\sum_{i=1}^n p_i \ln \frac{p_i}{q_i}\ge 0$$
    with equality if and only if $\bec p = \bec q$. \label{lem:log-sum}
\end{lemma}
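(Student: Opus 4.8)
The plan is to recognize the hypothesis $V^T\bec 1 = k\bec 1$ as a regularity (``$k$-cover'') condition and use it to rewrite the sum as a Kullback--Leibler divergence between two genuine probability distributions, at which point nonnegativity is the log-sum inequality (equivalently, Gibbs' inequality). Unpacking the hypothesis, $(V^T\bec 1)_j = \sum_{i=1}^n (\bec E_i)_j$ counts how many of the events $E_1,\dots,E_n$ contain the atom $\omega_j$, so $V^T\bec 1 = k\bec 1$ says that every atom lies in exactly $k$ of the events. First I would exploit this to show the two coordinate sums agree. Coherence of $(V,\bec p)$ and $(V,\bec q)$ supplies probability vectors $\bec\pi,\bec\rho$ with $V\bec\pi=\bec p$ and $V\bec\rho=\bec q$, whence
\[
\sum_{i=1}^n p_i = \bec 1^T V\bec\pi = (V^T\bec 1)^T\bec\pi = k\,\bec 1^T\bec\pi = k,
\]
since $\bec\pi$ is a probability vector, and identically $\sum_{i=1}^n q_i = k$.

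Next I would normalize. Put $\tilde p_i = p_i/k$ and $\tilde q_i = q_i/k$; by the previous step these are nonnegative and sum to $1$, hence are probability distributions on $\{1,\dots,n\}$. Pulling the common factor $k$ out of each logarithm,
\[
\sum_{i=1}^n p_i\ln\frac{p_i}{q_i} = k\sum_{i=1}^n \tilde p_i\ln\frac{\tilde p_i}{\tilde q_i} = k\,D_{\mathrm{KL}}(\tilde{\bec p}\,\|\,\tilde{\bec q}) \ge 0,
\]
the inequality being Gibbs' inequality (equivalently, applying the log-sum inequality directly to the $p_i,q_i$ gives the lower bound $(\sum_i p_i)\ln\frac{\sum_i p_i}{\sum_i q_i} = k\ln 1 = 0$). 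For the equality case, Gibbs' inequality is sharp exactly when $\tilde{\bec p}=\tilde{\bec q}$, i.e.\ when $\bec p = \bec q$; from the log-sum viewpoint, equality forces $p_i = c\,q_i$ for a single constant $c$, and summing then gives $k = ck$, so $c=1$ and again $\bec p = \bec q$. The converse ($\bec p = \bec q$ implies the sum is $0$) is immediate.

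I expect no serious obstacle; the one thing to get right is the bookkeeping that converts the $k$-cover hypothesis into the equality $\sum_i p_i = \sum_i q_i$, since this is precisely what licenses the normalization into an honest KL divergence. I would also note that the usual conventions $0\ln 0 = 0$ and $p\ln(p/0)=+\infty$ for $p>0$ keep both the displayed identity and the equality analysis valid without a separate case split. Conceptually, the lemma is exactly the statement that although $p\ln(p/q)$ by itself is not a dissimilarity function, summing it over a $k$-regular family of events restores nonnegativity together with a sharp equality condition.
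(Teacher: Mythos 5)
Your proof is correct and follows essentially the same route as the paper's: both reduce the claim to the log-sum inequality after using the $k$-cover hypothesis together with coherence to establish $\sum_i p_i = \sum_i q_i = k$, and both settle the equality case by noting that proportionality $p_i = c\,q_i$ forces $c=1$. The only (harmless) difference is bookkeeping: you verify $\sum_i p_i = k$ directly from a probability vector $\bec\pi$ with $V\bec\pi = \bec p$, whereas the paper cites its Lemma \ref{lem:ConsistencyOnRemoval1} for the same fact, and your normalization to an explicit KL divergence is a cosmetic repackaging of the same inequality.
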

\begin{proof}
    By Lemma \ref{lem:ConsistencyOnRemoval1}, since $V^T \bec 1 - k\bec 1 = \bec 0$, we have 
    $$\sum_{i=1}^n p_i - k = \sum_{i=1}^n q_i - k = 0.$$
    Therefore, $\sum_{i=1}^n p_i = k = \sum_{i=1}^n q_i$, so applying the log-sum inequality, Theorem 2.7.1 of \cite{infoTheory}, implies that
    $$\sum_{i=1}^n p_i \ln \frac{p_i}{q_i}\ge \left(\sum_{i=1}^n p_i\right) \ln \frac{\sum_{i=1}^n p_i}{\sum_{i=1}^n q_i} = 0$$
    with equality if and only if $\frac{p_i}{q_i}$ is constant. By coherence, this happens if and only if $\bec p = \bec q$.
\end{proof}
\noindent Recall from Section \ref{section:predd} that for a proper scoring rule $s(i, q)$, there is an associated dissimilarity function $\ell(p, q) = p(s(1, p) - s(1, q)) + (1-p)(s(0, p) -s(0, q))$. When using the log-loss scoring rule, this becomes $\ell = f$. Can we generalize Lemma \ref{lem:log-sum} to apply to half-dissimilarity functions
\begin{equation}\label{eqn:tilde}
    \tilde{\ell}(p, q):=ps(1, p) - ps(1, q)
\end{equation}
for a proper scoring rule $s$? No. The following lemma shows that log-loss is the only differentiable proper scoring rule for which Lemma \ref{lem:log-sum} holds.
\begin{lemma}
    Let $s(i, q)$ be a proper scoring rule that is differentiable in $q$ for $q\in (0, 1)$ with $\tilde \ell$ as defined in equation (\ref{eqn:tilde}).
    Suppose that for all coherent credence bases $(V, \bec q)$ where $V^T \bec 1=k\bec 1$ for some $k\in \mathbb N$,
    $$\bec q  = \argmin_{\bec p \in C(V)} \sum_{i=1}^n \tilde \ell(p_i, q_i)$$
    Then for some $\lambda < 0$ and $c\in \R$, 
    $${s(i, q) = \lambda(i\ln q + (1-i)\ln (1-q)) + c}.$$\label{lem:loglossunique}
\end{lemma}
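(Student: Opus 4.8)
The plan is to extract from the optimality of $\bec q$ a first-order condition on the single-variable function $g(q) := s(1,q)$, and to show this condition forces $q\mapsto q\,g'(q)$ to be constant. Write $F(\bec p) = \sum_i \tilde\ell(p_i, q_i)$ and note that $F$ depends on $s$ only through $g$, since $\tilde\ell$ involves only $s(1,\cdot)$. (I take $\tilde\ell(p,q)=p\,s(1,q)-p\,s(1,p)$, the ``half'' of the excess-penalty dissimilarity of Section~\ref{section:predd}, i.e. the sign under which $\bec q$ is a minimizer, consistent with Lemma~\ref{lem:log-sum}.) Because $V^T\bec 1 = k\bec 1$, every $\bec p\in C(\mathtt E)$ satisfies $\bec 1^T\bec p = \bec 1^T V\bec\pi = k$, so $C(V)$ lies in the hyperplane $\{\sum_i p_i = k\}$, as in the first display of the proof of Lemma~\ref{lem:log-sum}. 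Differentiating gives $\partial_{p_i} F = g(q_i)-g(p_i)-p_i g'(p_i)$, so at the minimizer $\bec p=\bec q$ the gradient is $\nabla F(\bec q) = -\bigl(q_1 g'(q_1),\dots,q_n g'(q_n)\bigr)$.

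Next I would feed in a well-chosen family of credence bases, namely the $3$-cycle $\Omega=\{\omega_1,\omega_2,\omega_3\}$ with $E_1=\{\omega_1,\omega_2\}$, $E_2=\{\omega_2,\omega_3\}$, $E_3=\{\omega_1,\omega_3\}$, so that $V$ is invertible and $V^T\bec 1 = 2\bec 1$ (here $k=2$). Then $C(V)$ is a full-dimensional triangle inside $\{\sum_i p_i = 2\}$, with relative interior $\{\bec q: q_i\in(0,1),\ \sum_i q_i = 2\}$ and tangent space $T=\{\bec d:\sum_i d_i = 0\}$. For any $\bec q$ in this relative interior, the hypothesis makes $\bec q$ the global minimizer of $F$ over $C(V)$; since $\bec q$ is an interior point of the polytope, the directional derivative of $F$ must vanish in every direction of $T$, i.e. $\nabla F(\bec q)\perp T$, which means $\nabla F(\bec q)\in T^\perp=\mathrm{span}(\bec 1)$. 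Writing $h(q):=q\,g'(q)$, this says exactly that $h(q_1)=h(q_2)=h(q_3)$ for every $q_1,q_2,q_3\in(0,1)$ with $q_1+q_2+q_3=2$.

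From here a short gluing argument shows $h$ is constant on $(0,1)$: the constraint realizes $h(x)=h(y)$ whenever $x+y\in(1,2)$, and for arbitrary $x,y\in(0,1)$ one picks $z$ close to $1$ with $x+z,\,y+z\in(1,2)$, whence $h(x)=h(z)=h(y)$. Thus $q\,g'(q)\equiv\lambda$, so $g'(q)=\lambda/q$ and $s(1,q)=g(q)=\lambda\ln q+c$. The requirement that $s$ take values in $[0,\infty]$ (with $s(1,0)$ the only admissible infinity) forces $\lambda<0$: if $\lambda>0$ then $s(1,q)\to-\infty$ as $q\to 0^+$, while $\lambda=0$ makes $s(1,\cdot)$ constant, contradicting properness. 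Finally I recover $s(0,\cdot)$ from properness itself: differentiating $p\,s(1,q)+(1-p)s(0,q)$ in $q$ and using that it is minimized at $q=p$ gives $p\,s'(1,p)+(1-p)s'(0,p)=0$; substituting $s'(1,p)=\lambda/p$ yields $s'(0,p)=-\lambda/(1-p)$, hence $s(0,q)=\lambda\ln(1-q)+c'$. This is the asserted form $s(i,q)=\lambda\bigl(i\ln q+(1-i)\ln(1-q)\bigr)+c$ up to the two additive constants, which may differ without affecting properness (adding a constant to $s(0,\cdot)$ or $s(1,\cdot)$ separately shifts the objective by a $q$-independent amount).

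I expect the main obstacle to be the careful justification of the variational step rather than any single computation: one must argue that it suffices to test the hypothesis on credence bases whose minimizer $\bec q$ lies in the \emph{relative interior} of $C(V)$ (so that vanishing of the projected gradient is genuinely necessary rather than merely a one-sided inequality), identify the tangent space $T$ and its annihilator correctly, and confirm that the $3$-cycle family sweeps out all pairs $(q_1,q_2)$ the gluing argument needs. The remaining pieces — the gluing argument, the integration $g'(q)=\lambda/q\Rightarrow g=\lambda\ln q+c$, and the properness identity fixing $s(0,\cdot)$ — are routine, the only delicate bookkeeping being the sign convention in $\tilde\ell$ and the resulting sign of $\lambda$.
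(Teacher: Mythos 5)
Your proof is correct and follows essentially the same route as the paper's: impose the first-order (interior-minimizer/Lagrange) condition at $\bec p=\bec q$ on explicit coherent test families to conclude that $q\,s'(1,q)$ is constant on $(0,1)$, integrate to get $s(1,q)=\lambda\ln q+c$, recover $s(0,\cdot)$ from the properness identity $p\,s'(1,p)+(1-p)s'(0,p)=0$, and deduce $\lambda<0$ from nonnegativity and properness; the paper merely uses different test families (the $3\times 3$ and $2\times 2$ identity matrices with $\bec q=(q,\tfrac12,\tfrac12-q)$ for $q\in(0,\tfrac12)$ and $\bec q=(q,1-q)$ for $q\in(\tfrac12,1)$) in place of your $3$-cycle with the gluing argument. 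Your two side remarks are also on target: equation (\ref{eqn:tilde}) as printed has the sign of $\tilde\ell$ reversed relative to the minimization hypothesis, and the hypothesis determines $s(1,\cdot)$ and $s(0,\cdot)$ only up to separate additive constants, a point the paper's own proof glosses over when it merges $c_1$ and $c_2$ into a single $c$.
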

\begin{proof}
    Note that by continuity, the value of $s$ when $q\in \set{0, 1}$ are determined by its values on the interval $(0, 1)$. Write 
    $$L(\bec p, \bec q) := \sum_{i=1}^n \tilde \ell(p_i, q_i)$$
    Let $V$ be the $3\times 3$ identity matrix and let $\lambda = \frac 12 s'(1, \frac 12)$ where $s'$ is the derivative of $s$ in $q$. For any $0<q < \frac 12$, consider the coherent belief vector $\bec q = (q, \frac 12, \frac 12 - q)^T$.
    By the assumption that $L(\bec p, \bec q)$ is minimized among coherent $\bec p$ when $\bec p = \bec q$, Lagrange multipliers yield that
    $$c\bec 1 = \pderiv{L}{\bec p}(\bec q, \bec q) = \begin{pmatrix}
        \lambda \\ q s'(1, q) \\ (\frac{1}{2}-q) s'(1, \frac{1}{2}-q)
    \end{pmatrix}$$
    so $q s'(1, q)=\lambda$ for all $0<q < \frac 12$. For $\frac 12 < q <1$, take $V$ to be the $2\times 2$ identity matrix and $\bec q = (q, 1 - q)$. The same argument as above shows that $q s'(1, q) = (1-q)s'(1, 1-q) = \lambda$. Therefore, for all $q\in (0, 1)$, we have $q s'(1, q) = \lambda$, so $s'(1, q) = \frac{\lambda}{q}$ and $s(1, q) = \lambda \ln q + c_1$ for some $c_1\in \R$.
    \\\\
    As $s$ is a proper scoring rule, the quantity $p s(1, q) + (1-p)s(0, q)$ is uniquely minimized in $q$ when $p=q$, so $-(1-p) s'(0, p) = ps'(1, p) =\lambda$, meaning that $s'(0, p) = \frac{-1}{1-p}$ and $s(0, p) = \lambda \ln (1-q)+c_2$ for some $c_2\in \R$. Therefore, we have
    $$s(i, q) = \lambda(i\ln q + (1-i)\ln (1-q)) + c_1+c_2.$$
    In order for $s$ to be a proper scoring rule, we must have $\lambda < 0$.
\end{proof}
\noindent Letting $O_i$ be the set of all subsets of $B_i$ whose sum is $\boldsymbol 1$ and $M_i=\sum_{S\in O_i} \#S$, we can use the dissimilarity function
\begin{equation}
    \mathcal D_i = \tilde D_{i, B_i} = \frac{1}{M_i}\sum_{S \in O_i} \sum_{\boldsymbol b \in S}\tilde \ell(P(\bec b), Q(\bec b)).\label{eqn:assymetricvariant}
\end{equation}
Lemma \ref{lem:log-sum} implies that when using $\tilde \ell(p, q) = \tilde f(p, q) := p\ln \frac pq$ or $\tilde \ell(p, q) = \tilde f^o(p, q) := q\ln \frac qp$, the measure of disagreement $\tilde D_{i, B_i}$ is minimized when $P(\bec b) = Q_i(\bec b)$ for all $\bec b \in B_i$.
\\\\
We compare the methods presented here in Appendix \ref{sec:specialcompare} as well as in Appendix \ref{sec:ex}.
\color{black}
\section*{Acknowledgements}

We thank Kenny Easwaran, Jonathan Gabor, Oliver Hopcroft, David Krueger, Yuval Peres, Suvadip Sana, Luchen Shi, and Ariel Yadin for inspiring discussions.
\pagestyle{plain}
\printbibliography
\pagestyle{headings}
\appendix
\section{Numerical Comparison between $f$ and $f^o$\label{sec:compare}}
\noindent To contrast the two settings in Section \ref{sec:seperateBeliefs}: One should use the dissimilarity function $f$ when submitting multiple forecasts based on their possibility incoherent internal beliefs (and expects these forecasts to be scored with the logarithmic proper scoring rule); its transpose $f^o$ should be used in aggregating independent sources of information, each of which gives information about one event, in order to find the most likely true probability distribution.
\\\\
Recall that $\textbf E_i\in \set{0, 1}^N$ is the vector whose $j$th entry is $1$ if $\omega_j\in E_i$ and $0$ otherwise. Additionally, $\textbf e_i$ denotes the $i$th standard basis vector and $\textbf 1$ the vector of all $1$s. For any distribution $\pi$, we associate a vector $\boldsymbol \pi$ with it, whose $j$th entry is $\pi(\omega_j)$, the probability of the $j$th atom in the Boolean algebra. 
\\\\
Table~\ref{fig:Lnumeric} contains some examples of the numerical differences between using $f$ and $f^o$ as dissimilarity functions.
\begin{table}[h]
\begin{center}
    \begin{tabular}{|m{12em} | m{10em}| m{9em} |}
         \hline
         Situation & \multicolumn{2}{c|}{$\bec{p^*}$} \\
         \cline{2-3}& $\ell=f$ & $\ell=f^o$\\
         \hline
         Exactly one event occurs:  & &
         \\$\boldsymbol E_1=\boldsymbol e_1, q_1 = 0.1$ & $p_1=0.01$&$p_1=0.04$
         \\$\boldsymbol E_2=\boldsymbol e_2, q_2 = 0.6$ & $p_2=0.11$&$p_2=0.30$
         \\$\boldsymbol E_3=\boldsymbol e_3, q_3 = 0.99$ & $p_3=0.89$&$p_3=0.66$
         \\\hline
         All estimates are for the same event:  & &
         \\${\boldsymbol E_1 = \boldsymbol E_2=\boldsymbol E_3=(1 \, 0)^T}$ & &
         \\ $q_1 = 0.1, q_2=0.3, q_3=0.5$ &$p_1=p_2=p_3=0.27$ & $p_1=p_2=p_3=0.3$
         \\\hline
         $\boldsymbol E_1 = (1 \,1 \,0 \,0)^T$, $q_1=0.99$ &$p_1=0.87$ & $p_1=0.73$
         \\
         $\boldsymbol E_2 = (1 \,0 \,1 \,0)^T$, $q_2=0.5$ &$p_2=0.40$ & $p_2=0.47$
         \\
         $\boldsymbol E_3 = (1 \, 0 \,0 \,0)^T$, $q_3=0.1$ &$p_3=0.27$ & $p_3=0.20$
         \\
         $\boldsymbol E_4 = (0 \,1 \,0 \,0)^T$, $q_4=0.4$ &$p_4=0.60$ & $p_4=0.53$
         \\
         $\boldsymbol E_5 = (0 \,0 \,1 \,0)^T$, $q_5=0.4$ &$p_5=0.13$ & $p_5=0.27$
         \\\hline
         \end{tabular}
         \caption{Three examples of correcting incoherent probabilities with $f$ and $f^o$. By Theorem \ref{punique}, $\bec{p^*}$ is unique.}
         \label{fig:Lnumeric}
\end{center}
\end{table}
\begin{figure}[h]
     \centering
     \begin{subfigure}[b]{0.4\textwidth}
         \centering
         \includegraphics[width=\textwidth]{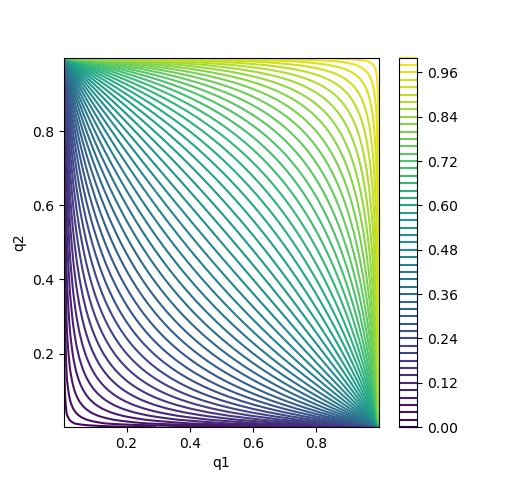}
         \caption{Using $\ell=f$}
         \label{fig:p:h=f}
     \end{subfigure}
     \hfill
     \begin{subfigure}[b]{0.4\textwidth}
         \centering
         \includegraphics[width=\textwidth]{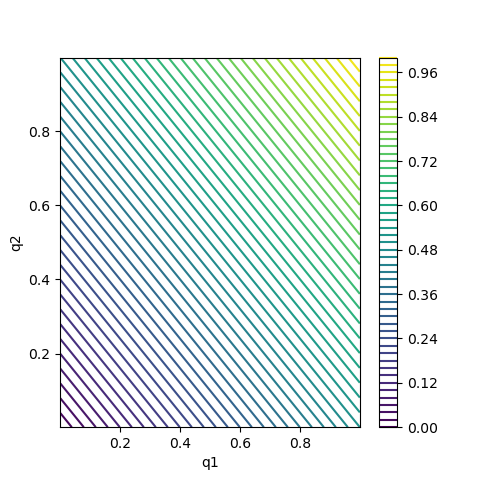}
         \caption{Using $\ell=f^o$}
         \label{fig:p:h=fo}
     \end{subfigure}
     \hfill
        \caption{Contour plots of $p^*(\bec q)$ when $E_1=E_2$ are the events being estimated. Contour distance is 0.02.
        \label{fig:p*comparisons}
        }
\end{figure}
\\\\
The differences in $f$ and $f^o$ also leads the function $L^*$ to behave differently when using the two functions, as illustrated in Figure \ref{fig:L*comparisons}, where $E_1$ and  $E_2=E_1^c$ are the events being estimated.
\begin{figure}[h]
     \centering
     \begin{subfigure}[b]{0.4\textwidth}
         \centering
         \includegraphics[width=\textwidth]{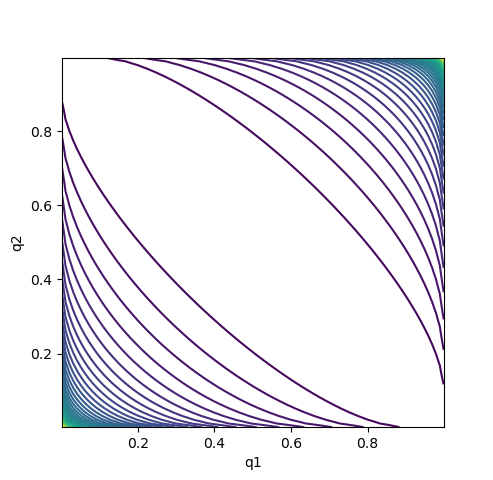}
         \caption{Using $\ell=f$}
         \label{fig:L:h=f}
     \end{subfigure}
     \hfill
     \begin{subfigure}[b]{0.4\textwidth}
         \centering
         \includegraphics[width=\textwidth]{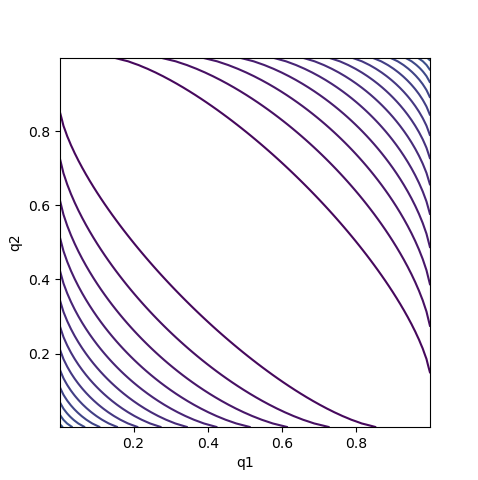}
         \caption{Using $\ell=f^o$}
         \label{fig:L:h=fo}
     \end{subfigure}
     \hfill
        \caption{Contour plots of $L^*(\bec q)$ when $E_1$ and $ E_2=E_1^c$ are the events being estimated. Near the line $q_1+q_2=1$, both functions are approximately quadratic and can be approximated by $\frac{(q_1+q_2-1)^2}{(1-q_1+q_2)(1+q_1-q_2)}$. Also, when using $\ell=f$, $L^*$ is not bounded, while it is when using $\ell=f^o$. Contour distance is 0.1.}
        \label{fig:L*comparisons}
\end{figure}
\section{Computing \texorpdfstring{$\bec{p^*}$}{p*} and \texorpdfstring{$L^*$}{L*}\label{sec:findpstar}}
\noindent One way to find $L^*$ and $\boldsymbol{p^*}$ numerically is to define a function that, given a vector $\boldsymbol \pi$ and the $n\times N$ matrix $V$ whose $i$th row is $\textbf E_i$, and the credences $\boldsymbol q$, returns  $L(V\boldsymbol\pi, \boldsymbol q)$. Then, one can use a gradient descent algorithm to minimize $L(V\boldsymbol\pi, \boldsymbol q)$ under the conditions that the entries of $\boldsymbol \pi$ are nonnegative and sum to $1$. This algorithm is illustrated below in psuedocode.\\
\begin{algorithmic}
\State $\bec {\pi_0} \gets \frac{1}{N} \bec{1}$ \Comment{Initial guess is that all atoms have equal probability}
\State \Return gradientDescent($\bec \pi \mapsto L(V\boldsymbol\pi, \boldsymbol q)$,\Comment{The loss function in terms of $\bec \pi$}\\ 
    \hspace{1.47in} $\bec{\pi_0}$, \Comment{Initial guess of $\bec \pi$}\\
    \hspace{1.47in} $\bec \pi \cdot \bec 1 = 1$, $\bec 0 \le \bec \pi \le \bec 1$) \Comment{Conditions $\bec \pi$ must satisfy}\\
\end{algorithmic}
Computing $\bec{p^*}(\bec q)$ to within an accuracy of $\epsilon$ takes $O\left(nN\log\left(\frac{N}{\epsilon}\right)\right)$ time \cite{bubeck_convex_2015}. Python code to compute $\bec{p^*}$ and $L^*$ can be found on \url{https://github.com/scim142/quantifying_coherence}.
\section{Holder Continuity of $\bec{p^*}$ and $L^*$\label{sec:continuity}}
\noindent Let $||\cdot ||$ denote the Euclidean norm.
\begin{lemma}
If $\ell$ is (strictly) convex/$n$-times differentiable/Lipschitz in $p$/$q$/both, then $L$ will also be. \label{lem:2.1}
\end{lemma}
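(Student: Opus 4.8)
The plan is to exploit the fact that $L(\bec p, \bec q) = \sum_{i=1}^n \ell_i(p_i, q_i)$ is a \emph{separable} sum: the $i$th summand depends on $(\bec p, \bec q)$ only through the single pair $(p_i, q_i)$. Writing $\pi_i(\bec p, \bec q) = (p_i, q_i)$ for the coordinate projection (a linear, hence affine and $C^\infty$, map), each summand is the composite $\ell_i \circ \pi_i$, so that $L = \sum_i \ell_i \circ \pi_i$. Each of the listed properties is (i) preserved under precomposition with the affine maps $\pi_i$ and (ii) preserved under finite sums, so the lemma follows by verifying these two closure facts property by property. The reasoning is identical whether the hypothesis concerns the $p$ variable, the $q$ variable, or both jointly, since in each case one simply precomposes with the appropriate coordinate projection.

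For differentiability and for the Lipschitz property the argument is routine. Differentiability is stable under composition with linear maps (the chain rule) and under finite sums, so $L$ is $k$-times differentiable (indeed $C^k$) whenever each $\ell_i$ is; here I write $k$ for the order of differentiability, to free $n$ for the number of events. For the Lipschitz property, if each $\ell_i$ is $K_i$-Lipschitz in $p$ then the triangle inequality gives $|L(\bec p, \bec q) - L(\bec p', \bec q)| \le \sum_i K_i |p_i - p_i'| \le (\max_i K_i)\,\|\bec p - \bec p'\|_1 \le \sqrt{n}\,(\max_i K_i)\,\|\bec p - \bec p'\|$, so $L$ is Lipschitz in $\bec p$; the constant degrades only by the harmless factor $\sqrt n$ from comparing the $\ell^1$ and Euclidean norms, and the $q$-only and joint cases are identical.

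For plain convexity, precomposition of a convex function with an affine map is convex and a finite sum of convex functions is convex, so $L$ is convex. The one step that needs genuine care --- and the step I expect to be the main obstacle --- is \textbf{strict convexity}, because the slogan ``a sum of strictly convex functions is strictly convex'' does not apply verbatim: each summand $\ell_i \circ \pi_i$ is constant in every coordinate other than its own, hence is \emph{not} strictly convex as a function of the full vector. The fix is to isolate the coordinate that actually moves. Given $\bec p \ne \bec p'$ and $t \in (0,1)$ for which the convex combination $t L(\bec p,\bec q) + (1-t)L(\bec p',\bec q)$ is finite, choose an index $j$ with $p_j \ne p_j'$; strict convexity of $\ell_j$ in $p$ makes the $j$th summand strictly convex along the segment while every other summand is weakly convex, and adding a single strict inequality to finitely many weak ones yields a strict inequality for $L$. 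When that combination is infinite the required inequality is either automatic (midpoint value finite) or vacuous (both sides infinite), so the paper's ``strictly convex when finite'' condition holds in every case; the $q$-only and joint versions are verbatim the same with $j$ chosen so that the relevant differing coordinate is used.
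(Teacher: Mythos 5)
Your proof is correct and takes essentially the same approach as the paper: write $L$ as a separable sum whose $i$th term depends on $(\bec p, \bec q)$ only through $(p_i, q_i)$, and check that each listed property is preserved under precomposition with coordinate projections and under finite sums. You in fact prove more than the paper's proof, which writes out only plain convexity in $p$ and leaves the other cases as analogous; your coordinate-selection argument for strict convexity (picking an index $j$ with $p_j \ne p_j'$, since each individual summand is \emph{not} strictly convex in the full vector, and handling the infinite values per the ``strictly convex when finite'' definition) correctly fills in exactly the case the paper glosses over.
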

\begin{proof}
    (convexity in $p$) Let $\ell_i=w_i\ell(\textbf p \cdot \textbf e_i, \textbf q \cdot \textbf e_i)$. Then, for $t\in[0, 1]$, $$\ell_i(t\textbf p + (1-t)\textbf p', \textbf q)=w_i \ell(tp_i+(1-t)p_i', q_i)$$
    $$\le w_i t\ell(p_i, q_i)+w_i(1-t)\ell(p_i', q_i)=t\ell_i(\textbf p, \textbf q)+(1-t)\ell_i(\textbf p', \textbf q)$$
    so $\ell_i$ is convex. As $L(\textbf p, \textbf q) = \sum_{i=1}^{n} \ell_i(\textbf p, \textbf q)$ is the sum of convex functions, $L$ is convex.
\end{proof}
\begin{lemma}
\label{Lqsim}
If $\ell$ is Lipchitz in $q$ in some region $D\subseteq [0, 1]^2$, then $L^*$ will be Lipchitz on the set $\bar{D}:=\set{\textbf q\in [0, 1]^n:(\boldsymbol{p^*}(\textbf q), \textbf q)\in D}$ with Lipschitz constant equal to that of $L$ in $\boldsymbol q$.
\end{lemma}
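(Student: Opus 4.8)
The plan is to read $L^*$ as the value function of the parametrized minimization $L^*(\bec q) = \min_{\bec p\in C(\mathtt E)} L(\bec p,\bec q)$ and to transfer the Lipschitz control in the second argument from the objective $L$ to its minimum. By Lemma~\ref{lem:2.1}, the hypothesis that $\ell$ is Lipschitz in $q$ on $D$ makes $\bec q\mapsto L(\bec p,\bec q)$ Lipschitz in $\bec q$, with the very constant $K$ that the statement advertises, for each fixed admissible $\bec p$. This single property of $L$ is all I intend to use, together with the existence and uniqueness of the coherent minimizer $\bec p^*(\bec q)$ from Theorem~\ref{punique}.

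First I would fix $\bec q,\bec q'\in\bar D$ and write $\bec p^*(\bec q),\bec p^*(\bec q')\in C(\mathtt E)$ for the corresponding minimizers, so that $L^*(\bec q)=L(\bec p^*(\bec q),\bec q)$ and $L^*(\bec q')=L(\bec p^*(\bec q'),\bec q')$. The heart of the argument is a two-sided comparison that exploits optimality at only one of the two points. Since $\bec p^*(\bec q')$ merely competes at $\bec q$, substituting it into $L(\cdot,\bec q)$ is suboptimal, giving $L^*(\bec q)\le L(\bec p^*(\bec q'),\bec q)$; subtracting $L^*(\bec q')=L(\bec p^*(\bec q'),\bec q')$ yields
$$L^*(\bec q)-L^*(\bec q')\le L(\bec p^*(\bec q'),\bec q)-L(\bec p^*(\bec q'),\bec q')\le K\|\bec q-\bec q'\|,$$
where the final step is the Lipschitz bound applied to the \emph{single fixed} point $\bec p^*(\bec q')$. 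Interchanging the roles of $\bec q$ and $\bec q'$ (and of their minimizers) gives the reverse inequality, and together they produce $|L^*(\bec q)-L^*(\bec q')|\le K\|\bec q-\bec q'\|$. Equivalently, one may invoke the standard fact that an infimum of a family of $K$-Lipschitz functions — here $\{\,\bec q\mapsto L(\bec p,\bec q)\,\}_{\bec p\in C(\mathtt E)}$ — is itself $K$-Lipschitz.

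The step that needs care, and which I expect to be the only real obstacle, is the domain bookkeeping. The Lipschitz estimate for $L(\bec p^*(\bec q'),\cdot)$ above requires each coordinate pair $(p^*_i(\bec q'),q_i)$ and $(p^*_i(\bec q'),q_i')$ to lie in the region $D$ where $\ell$ is Lipschitz in its second argument. The definition of $\bar D$ supplies $(\bec p^*(\bec q'),\bec q')\in D$ directly, but the comparison also evaluates $L(\bec p^*(\bec q'),\bec q)$, which pairs a minimizer associated with $\bec q'$ against the coordinates of $\bec q$, and such mixed pairs need not a priori sit in $D$. To close this I would either assume $D$ is convex in the $q$-direction, so the segment joining $(p^*_i,q_i)$ to $(p^*_i,q_i')$ stays in $D$ and the Lipschitz bound integrates along it, or else settle for a local statement: for $\bec q,\bec q'$ sufficiently close, continuity of $\bec p^*$ (Appendix~\ref{sec:continuity}) keeps every relevant pair inside a neighborhood contained in $D$, and a function that is locally $K$-Lipschitz on a connected set is globally $K$-Lipschitz there. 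Once the Lipschitz bound is licensed on the pertinent pairs, the two displayed inequalities are immediate, so this domain issue is the sole nontrivial point of the proof.
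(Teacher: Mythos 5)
Your proposal is correct and takes essentially the same route as the paper: the paper's proof is the same one-sided optimality comparison, bounding $L^*(\bec q + \bec\delta) \le L(\bec p^*(\bec q), \bec q + \bec\delta) \le L^*(\bec q) + k||\bec\delta||$ with the minimizer at one point used as a competitor at the other, handling the two directions by a case split where you interchange roles. The domain subtlety you flag---that the mixed pair $(\bec p^*(\bec q), \bec q')$ need not lie in $D$---is genuine, but the paper's proof silently glosses over it too, so your explicit repair (convexity of $D$ in the $q$-direction, or a local-to-global argument) only strengthens the argument rather than diverging from it.
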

\begin{proof}
    
    Let $\boldsymbol q, \boldsymbol{q'}\in \bar D$ and consider the case when $L^*(\textbf q)\le L^*(\boldsymbol{q'})$. Let $\boldsymbol \delta = \boldsymbol{q'} - \boldsymbol q$ and $k$ be the Lipschitz constant of $L$. Then,
    $$L^*(\textbf q)\le L^*(\textbf q + \boldsymbol \delta)\le L(\boldsymbol{p^*}(\boldsymbol q), \boldsymbol q + \boldsymbol \delta)\le L^*(\boldsymbol q) + k ||\boldsymbol \delta||$$
    If $L^*(\boldsymbol q)> L^*(\boldsymbol{q'})$, then the above shows that $L^*(\boldsymbol q)-k||\boldsymbol \delta||\le L^*(\boldsymbol{q'})$. Therefore, regardless of the relative sizes of $L^*(\boldsymbol q)$ and $L^*(\boldsymbol{q'})$, we have
    $$L^*(\boldsymbol q)-k||\boldsymbol \delta||\le L^*(\boldsymbol{q'})\le L^*(\boldsymbol q)+k||\boldsymbol \delta||.\qedhere$$
\end{proof}

\begin{lemma}
    If $\ell$ is strictly convex and twice differentiable in $(0, 1)^n$, then $\boldsymbol{p^*}(\boldsymbol q)$ is continuous. \label{lem:p*continuous}
\end{lemma}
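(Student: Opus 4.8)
The plan is to establish sequential continuity of the map $\bec q \mapsto \bec p^*(\bec q)$ on the set of $\bec q$ for which some coherent belief achieves finite loss (so that $\bec p^*(\bec q)$ is defined and unique by Theorem~\ref{punique}). Fix such a $\bec q$ and take any sequence $\bec q_m \to \bec q$ with each $\bec q_m$ in this domain. Because $C(\mathtt E)$ is a compact polytope by Lemma~\ref{lem:convex}, the sequence $\bec p^*(\bec q_m)$ has convergent subsequences, and it suffices to prove that every convergent subsequence has limit $\bec p^*(\bec q)$; the uniqueness half of Theorem~\ref{punique} is exactly what upgrades this subsequential statement to $\bec p^*(\bec q_m) \to \bec p^*(\bec q)$.

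First I would pass to a subsequence with $\bec p^*(\bec q_{m_k}) \to \bec p'$, noting $\bec p' \in C(\mathtt E)$ since $C(\mathtt E)$ is closed. Optimality of $\bec p^*(\bec q_{m_k})$, tested against the fixed competitor $\bec p^*(\bec q) \in C(\mathtt E)$, gives
$$L(\bec p^*(\bec q_{m_k}), \bec q_{m_k}) \le L(\bec p^*(\bec q), \bec q_{m_k}).$$
Taking $k \to \infty$, I would bound the left side below using joint lower semicontinuity of $L$ (established in the boundary analysis below) to get $L(\bec p', \bec q) \le \liminf_k L(\bec p^*(\bec q_{m_k}), \bec q_{m_k})$, and evaluate the right side using continuity of $L(\bec p^*(\bec q), \,\cdot\,)$ in its second argument, so that $L(\bec p^*(\bec q), \bec q_{m_k}) \to L(\bec p^*(\bec q), \bec q) = L^*(\bec q)$. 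Chaining these yields $L(\bec p', \bec q) \le L^*(\bec q)$, so $\bec p'$ is itself a minimizer at $\bec q$; uniqueness then forces $\bec p' = \bec p^*(\bec q)$, completing the subsequence argument.

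The main obstacle is justifying the two limiting steps up to the boundary of $[0, 1]^n$, where $\ell$ may take the value $+\infty$ or fail to be differentiable. In the interior this is free: twice differentiability on the open square makes $\ell$, and hence $L$, jointly continuous on $(0, 1)^n \times (0, 1)^n$, which supplies both the lower bound and the second-argument continuity simultaneously. The delicate case is when a coordinate of $\bec q$ or of $\bec p^*(\bec q)$ equals $0$ or $1$. Here I would argue coordinatewise: finiteness of $L(\bec p^*(\bec q), \bec q)$ constrains which boundary values can occur together (for the KL-type losses, $p^*_i = 0$ whenever $q_i = 0$ and $p^*_i = 1$ whenever $q_i = 1$), and along these compatible boundary directions the relevant one-sided limits of $\ell$ are finite and continuous, which is enough to recover both limiting inequalities. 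This boundary bookkeeping, rather than the convex-analytic core, is where the real work lies.
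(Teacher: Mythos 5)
Your core argument is correct and follows the same skeleton as the paper's proof: extract a convergent subsequence of $\bec p^*(\bec q_m)$ by compactness, show the subsequential limit $\bec p'$ is a minimizer of $L(\cdot,\bec q)$ over $C(\mathtt E)$, and invoke the uniqueness half of Theorem~\ref{punique} to conclude $\bec p'=\bec p^*(\bec q)$. Where you differ is in how the middle step is certified. You use the standard variational argument: test optimality at $\bec q_{m_k}$ against the fixed competitor $\bec p^*(\bec q)$, then pass to the limit using lower semicontinuity on the left and continuity in the second argument on the right. The paper instead routes through Lemma~\ref{Lqsim} (Lipschitz continuity of $L^*$, with constant inherited from $L$) together with Lipschitz estimates on $L$ in both arguments inside a closed region $D$ containing $(\bec p^*(\bec q),\bec q)$, obtaining $L(\bec p',\bec q)=L(\bec p^*(\bec q),\bec q)$ by a triangle-inequality computation. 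Your route is arguably cleaner in the interior, since joint continuity of $L$ on $(0,1)^n\times(0,1)^n$ (from twice differentiability of $\ell$) supplies both limiting steps at once, without needing the auxiliary Lipschitz lemma; the paper's route has the advantage that the Lipschitz machinery it sets up is reused immediately afterwards in Theorem~\ref{thm:holder}.

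One caution: your final paragraph, the ``boundary bookkeeping,'' is the only place where your proposal is a sketch rather than a proof, and the claims you invoke there (e.g., $p^*_i=0$ whenever $q_i=0$, and finiteness plus continuity of the one-sided limits of $\ell$) are properties of the KL-type losses $f$ and $f^o$, not consequences of the lemma's stated hypotheses, which permit an arbitrary dissimilarity function that is strictly convex and twice differentiable on the interior. Note also that the genuine difficulty is not only boundary $\bec q$: even for interior $\bec q$, the subsequential limit $\bec p'$ may a priori lie on the boundary of $[0,1]^n$, and then joint lower semicontinuity of $L$ at $(\bec p',\bec q)$ does not follow from smoothness on the open cube; hypothesis (i) gives lower semicontinuity in $p$ for \emph{fixed} $q$ only, so you need some uniform-in-$p$ control of $\ell(p,\cdot)$ near $\bec q$ (which holds for $f$, $f^o$, and squared loss, but is not automatic). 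To be fair, the paper's proof quietly sidesteps the same issue by fixing $\bec q\in(0,1)^n$ and positing a closed region $D$ that contains $(\bec p^*(\bec q'),\bec q')$ for all nearby $\bec q'$ and on which $L$ is Lipschitz --- an assumption that presupposes $\bec p^*$ does not escape toward the boundary. So on the lemma's intended interior scope your argument matches the paper's in rigor and is self-contained once you add the uniform continuity remark; your boundary discussion goes beyond what the paper attempts, but as written it would only be valid for the specific KL-type losses.
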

\begin{proof}
    Fix any $\bec q \in (0, 1)^n$ and $k_q$, $k_p$ greater than the Lipschitz constants of $L$ with respect to $\bec q$ and $\bec p$ respectively in the neighborhood of $(\bec{p^*}(\bec q), \bec q)$. Let $D$ be a closed region in which $L$ is Lipschitz in $\boldsymbol{p}$ with Lipschitz constant $k_p$ and Lipshcitz in $\boldsymbol{q}$ with Lipschitz constant $k_q$ and $\bar D$ be a closed region contained in $\set{\boldsymbol q':(\boldsymbol{p^*}(\boldsymbol{q'}), \boldsymbol{q'})\in D}$ that contains $(\bec{p^*}(\bec q), \bec q)$.\\\\
    Let $\boldsymbol{q_1}, \boldsymbol{q_2}, \dots$ be a sequence in $\bar D$ with limit $\boldsymbol q$. As $(\boldsymbol{p^*}(\boldsymbol a), \boldsymbol{a})\in D$ for all $\boldsymbol a\in \bar D$, and $D$ is sequentially compact, the sequence $\boldsymbol{p^*}(\boldsymbol{q_1}), \boldsymbol{p^*}(\boldsymbol{q_2}), \dots$ has a subsequence with a limit. Let $\boldsymbol{p'}$ be one of those limit and $\boldsymbol{a_1}, \boldsymbol{a_2}, \dots$ be a subsequence of $\boldsymbol{q_1}, \boldsymbol{q_2}, \dots$ where the limit of $\boldsymbol{p^*}(\boldsymbol{a_1}), \boldsymbol{p^*}(\boldsymbol{a_2}), \dots$ is $\boldsymbol{p'}$. Then, for any $\epsilon >0$, there exists $N$ where for all $n>N$, $||\boldsymbol{a_n}-\boldsymbol q||<\epsilon$ and $||\boldsymbol{p^*}(\boldsymbol{a_n})-\boldsymbol{p'}||<\epsilon$. Then, by Lemma~\ref{Lqsim},
    $$||L(\boldsymbol{p^*}(\boldsymbol{a_n}), \boldsymbol{a_n})-L(\boldsymbol{p^*}(\boldsymbol{q}), \boldsymbol{a})||\le k_q\epsilon.$$
    Using the fact that $L$ is Lipschitz in $\boldsymbol p$ and $\boldsymbol q$, by the triangle inequality
    $$||L(\boldsymbol{p'}, \boldsymbol{q})-L(\boldsymbol{p^*}(\boldsymbol{q}), \boldsymbol{a})||-k_p\epsilon-k_q\epsilon\le ||L(\boldsymbol{p^*}(\boldsymbol{a_n}), \boldsymbol{a_n})-L(\boldsymbol{p^*}(\boldsymbol{q}), \boldsymbol{a})||$$
    $$||L(\boldsymbol{p'}, \boldsymbol{q})-L(\boldsymbol{p^*}(\boldsymbol{q}), \boldsymbol{a})||\le (2k_q+k_p)\epsilon.$$
    As this is true for all $\epsilon>0$, we must have $||L(\boldsymbol{p'}, \boldsymbol{q})-L(\boldsymbol{p^*}(\boldsymbol{q}), \boldsymbol{a})||=0$, so by Theorem \ref{punique}, $\boldsymbol{p'}=\boldsymbol{p^*}(\boldsymbol{q})$ and $\boldsymbol{p^*}$ is continuous at $\boldsymbol q$.
    
\end{proof}
\noindent Let $\nabla L(\bec p, \bec q)$ be the gradient of $L$ with respect to $\bec p$ at $(\bec p, \bec q)$.
\begin{theorem}
\label{holder}
    If $\ell$ is strictly convex and twice differentiable in $(0, 1)^n$, then $\boldsymbol{p^*}(\boldsymbol q)$ is Holder-$\frac 12$. \label{thm:holder}
\end{theorem}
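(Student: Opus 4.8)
The plan is to run the standard two-point variational argument for minimizers of a strongly convex function over a convex set, and to extract the exponent $\tfrac12$ from the mismatch between a \emph{quadratic} lower bound in $\bec p$ and a \emph{linear} (Lipschitz) upper bound in $\bec q$.

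First I would set up the local geometry exactly as in the proof of Lemma~\ref{lem:p*continuous}: fix $\bec q$ with $\bec p^*(\bec q)$ in the interior of $[0,1]^n$, and choose a compact neighborhood $D\subseteq (0,1)^n\times(0,1)^n$ of $(\bec p^*(\bec q),\bec q)$ on which $L$ is Lipschitz in $\bec q$ with some constant $k_q$ (Lemma~\ref{lem:2.1}), together with a region $\bar D$ around $\bec q$ as in Lemma~\ref{Lqsim}. Since $\ell$ is twice differentiable and strictly convex on $(0,1)$, the second derivative $\partial_p^2\ell$ is continuous and positive on the interior, so on the compact set $D$ it is bounded below by some $m>0$; hence $L(\cdot,\bec q)$ is strongly convex in $\bec p$ with modulus $m$ on the relevant slice, uniformly in $\bec q$.

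Next, take $\bec q,\bec q'\in\bar D$ and write $\bec p=\bec p^*(\bec q)$, $\bec p'=\bec p^*(\bec q')$, both lying in the convex polytope $C(\mathtt E)$ (Lemma~\ref{lem:convex}). Because $\bec p$ minimizes $L(\cdot,\bec q)$ over $C(\mathtt E)$, first-order optimality gives the variational inequality $\nabla L(\bec p,\bec q)\cdot(\bec p'-\bec p)\ge 0$; combined with strong convexity this yields $L(\bec p',\bec q)-L(\bec p,\bec q)\ge \tfrac m2||\bec p-\bec p'||^2$. Symmetrically, $L(\bec p,\bec q')-L(\bec p',\bec q')\ge \tfrac m2||\bec p-\bec p'||^2$. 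Adding the two inequalities and regrouping, the left-hand side becomes $[L(\bec p',\bec q)-L(\bec p',\bec q')]+[L(\bec p,\bec q')-L(\bec p,\bec q)]$, each bracket of which is at most $k_q||\bec q-\bec q'||$ in absolute value by the Lipschitz bound on $D$. Hence $m||\bec p-\bec p'||^2\le 2k_q||\bec q-\bec q'||$, i.e. $||\bec p^*(\bec q)-\bec p^*(\bec q')||\le \sqrt{2k_q/m}\,||\bec q-\bec q'||^{1/2}$, the claimed Hölder-$\tfrac12$ estimate.

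The main obstacle is not the algebra above but justifying its hypotheses near the boundary: the estimate is clean only where $\bec p^*$ stays in the interior $(0,1)^n$, since there $\ell$ is twice differentiable with $\partial_p^2\ell$ bounded below, whereas on faces of $C(\mathtt E)$ with a coordinate equal to $0$ or $1$ the gradient $\nabla L$ (e.g.\ for KL divergence) can blow up and the strong-convexity modulus can degenerate. I would handle this by invoking continuity of $\bec p^*$ (Lemma~\ref{lem:p*continuous}) to confine $\bec p,\bec p'$ to the interior region $D$ for $\bec q'$ close to $\bec q$, so that the bound holds locally with a uniform constant; the remaining point to verify is that the modulus $m$ obtained on $D$ is genuinely uniform over the $\bec q$-slice, which follows from continuity of $\partial_p^2\ell$ and compactness of $D$.
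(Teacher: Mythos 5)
Your argument is correct and reaches the same conclusion, but by a genuinely different route than the paper. The paper works asymptotically at the single point $(\bec{p^*}(\bec q),\bec q)$: it first bounds $L(\bec{p^*}(\bec q+\bec\delta),\bec q)-L(\bec{p^*}(\bec q),\bec q)\le 2k\|\bec\delta\|$ via the Lipschitz continuity of the \emph{value function} $L^*$ (Lemma~\ref{Lqsim}) plus a triangle inequality, and then lower-bounds that same difference by $(\lambda-\epsilon)\|\Delta\bec{p^*}\|^2$ using a second-order Taylor expansion with remainder, the optimality condition $\nabla L(\bec{p^*}(\bec q),\bec q)\cdot\Delta\bec{p^*}\ge 0$, and continuity of $\bec{p^*}$ (Lemma~\ref{lem:p*continuous}) to make the remainder term negligible; here $\lambda$ is the least Hessian eigenvalue at that one point. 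Your symmetric two-point variational inequality replaces the Taylor-with-remainder limit by uniform strong convexity on a compact convex neighborhood, and bypasses Lemma~\ref{Lqsim} entirely, since your two brackets only use Lipschitzness of $L$ itself in $\bec q$. What your version buys: a clean explicit constant $\sqrt{2k_q/m}$ valid on a whole neighborhood of $\bec q$, rather than an asymptotic bound as $\bec\delta\to\bec 0$, and no limit computation. What it needs extra: a Hessian lower bound $m>0$ uniformly on the neighborhood rather than positivity at one point, though by continuity of the second derivatives these are equivalent after shrinking the neighborhood; and, as you note, continuity of $\bec{p^*}$ is still needed to confine $\bec{p^*}(\bec q')$ to that neighborhood, just as the paper needs it for its remainder estimate.

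One inference deserves a flag: you claim that strict convexity plus twice differentiability forces $\partial_p^2\ell>0$ on the interior, and hence $m>0$ by compactness. That implication is false in general: $\ell(p,q)=(p-q)^4$ is a smooth, strictly convex dissimilarity function whose second derivative in $p$ vanishes at $p=q$; strictness only yields $\partial_p^2\ell\ge 0$. However, the paper's proof makes the same unstated assumption --- it requires $0<\epsilon<\lambda$, i.e.\ that the least Hessian eigenvalue at $(\bec{p^*}(\bec q),\bec q)$ is strictly positive, which strict convexity alone does not guarantee. So both arguments really prove the theorem under the additional hypothesis of a positive-definite Hessian near the minimizer; since you share this assumption with the paper's own proof, it is not a gap relative to the paper, but it would be better to state it explicitly rather than derive it from strict convexity.
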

\begin{proof}
    Fix any $\bec q \in (0, 1)^n)$ and let $\lambda$ be the least eigenvalue of the Hessian $H$ of $L$ with respect to $\boldsymbol p$ at $(\boldsymbol{p^*}(\boldsymbol q), \boldsymbol q)$, let $D$ be some open region over which $L$ is Lipschitz that contains  $(\boldsymbol{p^*}(\boldsymbol q), \boldsymbol q)$, and let $\bar{D}=\set{\textbf q\in [0, 1]^n:(\boldsymbol{p^*}(\textbf q), \textbf q)\in D}$. Additionally, let $k$ be the Lipschitz constant of $L$ with respect to $\boldsymbol q$ on $\bar D$. Then, first of all, by Lemma~\ref{Lqsim}, for all $\boldsymbol \delta$ where $\boldsymbol q + \boldsymbol \delta \in \bar D$,
    $$|L(\boldsymbol{p^*}(\boldsymbol q+\boldsymbol \delta), \boldsymbol q+\boldsymbol \delta)-L(\boldsymbol{p^*}(\boldsymbol q), \boldsymbol q)|\le k ||\boldsymbol \delta||.$$
    Then, by the triangle inequality, as $L$ is Lipschitz in $\boldsymbol q$
    $$|L(\boldsymbol{p^*}(\boldsymbol q+\boldsymbol \delta), \boldsymbol q)-L(\boldsymbol{p^*}(\boldsymbol q), \boldsymbol q)|$$
    $$\le |L(\boldsymbol{p^*}(\boldsymbol q+\boldsymbol\delta), \boldsymbol q)-L(\boldsymbol{p^*}(\boldsymbol q+\boldsymbol \delta), \boldsymbol q+\boldsymbol \delta)|+|L(\boldsymbol{p^*}(\boldsymbol q+\boldsymbol \delta), \boldsymbol q+\boldsymbol \delta)-L(\boldsymbol{p^*}(\boldsymbol q), \boldsymbol q)|\le2 k ||\boldsymbol \delta||.$$
    So,
    $$L(\boldsymbol{p^*}(\boldsymbol q+\boldsymbol \delta), \boldsymbol q)-L(\boldsymbol{p^*}(\boldsymbol q), \boldsymbol q)\le 2k||\boldsymbol \delta||.$$
    Also, as $L$ is twice differentiable in $\boldsymbol p$ and $ \boldsymbol{p^*}$ is continuous at $\boldsymbol q$ by Lemma \ref{lem:p*continuous},
    $$0=\lim_{\boldsymbol \delta \rightarrow \boldsymbol 0}\frac{L(\boldsymbol{p^*}(\boldsymbol q+\boldsymbol \delta), \boldsymbol q)-L(\boldsymbol{p^*}(\boldsymbol q), \boldsymbol q)-\nabla L(\bec{p^*}(\bec q), \bec q)\cdot \Delta \bec{p^*}-\Delta \boldsymbol{p^{*}}^T H \Delta \boldsymbol{p^*}}{||\Delta \boldsymbol{p^*}||^2}$$
    where $\Delta \boldsymbol{p^*}=\boldsymbol{p^*}(\boldsymbol q+\boldsymbol \delta)-\boldsymbol{p^*}(\boldsymbol q)$ and $\nabla L(\bec{p^*}(\bec q), \bec q)\cdot \Delta \bec{p^*}\ge 0$ as in Theorem \ref{punique}.
    Then, for any $0<\epsilon <\lambda$, for there exists $\Delta > 0$ where for $0<||\boldsymbol \delta||<\Delta$
    $$-\epsilon<\frac{L(\boldsymbol{p^*}(\boldsymbol q+\boldsymbol \delta), \boldsymbol q)-L(\boldsymbol{p^*}(\boldsymbol q), \boldsymbol q)-\nabla L(\bec{p^*}(\bec q), \bec q)\cdot \Delta \bec{p^*}-\Delta \boldsymbol p^{*T} H \Delta \boldsymbol{p^*}}{||\Delta \boldsymbol{p^*}||^2}$$
    $$\le\frac{2k||\boldsymbol \delta||-\lambda ||\Delta \boldsymbol{p^*}||^2}{||\Delta \boldsymbol{p^*}||^2}.$$
    So
    $$\frac{||\Delta \boldsymbol p^*||}{\sqrt{||\boldsymbol\delta||}}\le\sqrt{\frac{2k}{\lambda-\epsilon}}.$$
    And $\boldsymbol{p^*}$ is Holder-$\frac{1}{2}$ in the open ball of radius $\Delta$ at $\boldsymbol q$.
\end{proof}
\noindent Let $\nabla_{\boldsymbol q} L(\boldsymbol{p^*}(\boldsymbol q), \boldsymbol q)$ be the gradient of $L(\boldsymbol p, \boldsymbol q)$ as $\boldsymbol q$ is changed at the point $(\boldsymbol{p^*}(\boldsymbol q), \boldsymbol q)$.
\begin{theorem} \label{thm:holderImpliesDiff}
        Suppose there is some open region $\bar D\subseteq(0, 1)^n$ where $\boldsymbol{p^*}$ is Holder-$\frac{1+\alpha}{2}$ for some $\alpha>0$, and $\ell$ is twice differentiable at $(\boldsymbol{p^*}(\boldsymbol q), \boldsymbol q)$ for $\boldsymbol q \in \bar D$. Then, $L^*$ is differentiable in $\bar D$ with derivative $\nabla L^*(\boldsymbol q)=\nabla_{\boldsymbol q} L(\boldsymbol{p^*}(\boldsymbol q), \boldsymbol q)$.
\end{theorem}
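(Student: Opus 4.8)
The plan is to prove differentiability of $L^*$ at each $\bec q\in\bar D$ by sandwiching the increment $L^*(\bec q+\bec\delta)-L^*(\bec q)$ between $\nabla_{\bec q} L(\bec p^*(\bec q),\bec q)\cdot\bec\delta + o(\|\bec\delta\|)$ on both sides. This is an envelope-theorem argument, but it is complicated by the fact that the minimizer $\bec p^*(\bec q)$ typically sits on the boundary of the polytope $C(\mathtt E)$, so the $\bec p$-gradient $\nabla L(\bec p^*(\bec q),\bec q)$ need not vanish there. Fix $\bec q\in\bar D$ and abbreviate $\bec p=\bec p^*(\bec q)$; for $\bec\delta$ small enough that $\bec q+\bec\delta\in\bar D$ (possible since $\bar D$ is open), write $\bec p_\delta=\bec p^*(\bec q+\bec\delta)$ and $\Delta\bec p=\bec p_\delta-\bec p$. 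The Holder-$\frac{1+\alpha}{2}$ hypothesis supplies a constant $C$ with $\|\Delta\bec p\|\le C\|\bec\delta\|^{(1+\alpha)/2}$, and this quantitative bound is the input that drives the whole argument.

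For the upper bound I would exploit that $C(\mathtt E)$ does not depend on the credence vector, so $\bec p$ is a feasible (if suboptimal) candidate for the problem defining $L^*(\bec q+\bec\delta)$. Hence $L^*(\bec q+\bec\delta)\le L(\bec p,\bec q+\bec\delta)$, and subtracting $L^*(\bec q)=L(\bec p,\bec q)$ and expanding $L$ to first order in its second argument at $(\bec p,\bec q)$ (licensed by twice-differentiability there) gives
$$L^*(\bec q+\bec\delta)-L^*(\bec q)\le \nabla_{\bec q} L(\bec p,\bec q)\cdot\bec\delta + o(\|\bec\delta\|).$$

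For the lower bound I would write the \emph{joint} second-order Taylor expansion of $L$ at the single point $(\bec p,\bec q)$ with increment $(\Delta\bec p,\bec\delta)$, so that
$$L^*(\bec q+\bec\delta)-L^*(\bec q)= \nabla L(\bec p,\bec q)\cdot\Delta\bec p + \nabla_{\bec q}L(\bec p,\bec q)\cdot\bec\delta + O\!\left(\|(\Delta\bec p,\bec\delta)\|^2\right).$$
The Holder bound makes $\|\Delta\bec p\|^2=O(\|\bec\delta\|^{1+\alpha})$ and $\|\Delta\bec p\|\,\|\bec\delta\|=O(\|\bec\delta\|^{(3+\alpha)/2})$, so since $\alpha>0$ every second-order contribution, together with the Taylor remainder, is $o(\|\bec\delta\|)$; this is exactly the place where $\alpha>0$, rather than mere Holder-$\tfrac12$, is needed, since with $\alpha=0$ the term $\|\Delta\bec p\|^2$ would be of the same order as $\|\bec\delta\|$ and the $\bec p\bec p$-Hessian block could spoil the first-order identity. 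The surviving term $\nabla L(\bec p,\bec q)\cdot\Delta\bec p$ is nonnegative: because $\bec p$ minimizes the convex function $L(\cdot,\bec q)$ over the convex set $C(\mathtt E)$ and $\bec p_\delta\in C(\mathtt E)$, the first-order optimality (feasible-direction) inequality $\nabla L(\bec p,\bec q)\cdot(\bec p_\delta-\bec p)\ge 0$ holds, which is the same fact already invoked in the proof of Theorem~\ref{thm:holder}. Dropping this nonnegative term yields
$$L^*(\bec q+\bec\delta)-L^*(\bec q)\ge \nabla_{\bec q} L(\bec p,\bec q)\cdot\bec\delta + o(\|\bec\delta\|).$$

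Combining the two bounds gives $L^*(\bec q+\bec\delta)-L^*(\bec q)=\nabla_{\bec q}L(\bec p^*(\bec q),\bec q)\cdot\bec\delta+o(\|\bec\delta\|)$, which is precisely differentiability of $L^*$ at $\bec q$ with the asserted gradient; as $\bec q\in\bar D$ was arbitrary, this holds throughout $\bar D$. The one genuine obstacle is the term $\nabla L(\bec p,\bec q)\cdot\Delta\bec p$: unlike in the textbook envelope theorem we cannot set it to zero, and the resolution is that we only ever need its \emph{sign} — the easy upper bound never sees this term at all, while the lower bound needs only $\nabla L(\bec p,\bec q)\cdot\Delta\bec p\ge0$, and the strengthened Holder-$\frac{1+\alpha}{2}$ regularity guarantees that the discarded quadratic terms cannot interfere at first order.
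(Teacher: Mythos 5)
Your proposal is correct, and it reaches the conclusion by a noticeably different decomposition than the paper's. The paper's proof splits the increment using the mixed second difference $L(\bec p^*(\bec q+\bec\delta),\bec q+\bec\delta)-L(\bec p^*(\bec q+\bec\delta),\bec q)-L(\bec p^*(\bec q),\bec q+\bec\delta)+L(\bec p^*(\bec q),\bec q)$, argues this is $o(\|\bec\delta\|)$ from continuity of $\bec p^*$ and of $\nabla_{\bec q}L$, and then reduces everything to showing $L(\bec p^*(\bec q+\bec\delta),\bec q)-L(\bec p^*(\bec q),\bec q)=o(\|\bec\delta\|)$, sandwiching that quantity between $0$ (minimality) and a Hessian term controlled by the Holder bound $\|\Delta\bec p^*\|^2=O(\|\bec\delta\|^{1+\alpha})$. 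You instead run an asymmetric envelope argument: the upper bound comes purely from feasibility of $\bec p^*(\bec q)$ at $\bec q+\bec\delta$ and first-order expansion in the second argument, while the lower bound comes from a single joint second-order Taylor expansion at $(\bec p^*(\bec q),\bec q)$, discarding the nonnegative term $\nabla L(\bec p^*(\bec q),\bec q)\cdot\Delta\bec p^*$ via the same variational inequality the paper invokes in Theorem~\ref{thm:holder}. Both proofs hinge on exactly the same two facts (the feasible-direction inequality and the exponent $\frac{1+\alpha}{2}$ with $\alpha>0$ making all quadratic contributions $o(\|\bec\delta\|)$), but your arrangement buys two things. First, it needs only pointwise twice differentiability of $L$ at the base point, whereas the paper's mixed-difference limit quietly uses existence and continuity of $\nabla_{\bec q}L$ along segments near $(\bec p^*(\bec q),\bec q)$. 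Second, it is more careful about the sign of the first-order term: the paper's display bounding $\bigl(L(\bec p^*(\bec q+\bec\delta),\bec q)-L(\bec p^*(\bec q),\bec q)\bigr)/\|\Delta\bec p^*\|^2$ above by the least Hessian eigenvalue $\lambda$ implicitly drops $\nabla L\cdot\Delta\bec p^*\ge 0$ inside an \emph{upper} bound, which is problematic as written, since at a boundary minimizer of the polytope this term can be of order $\|\Delta\bec p^*\|\gg\|\bec\delta\|$; your sandwich only ever uses the sign of that term on the side where nonnegativity helps (and the upper bound never sees $\Delta\bec p^*$ at all), so your version closes cleanly where the paper's final chain of inequalities is loose.
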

\begin{proof}
Let $k$ be the Holder-$\frac{1+\alpha}{2}$ coefficient of $p^*$ and $\lambda$ the least eigenvalue of the Hessian $H$ of $L$ with respect to $\boldsymbol p$ at $(\boldsymbol{p^*}(\boldsymbol q), \boldsymbol q)$. Then, consider
$$\lim_{\boldsymbol \delta \rightarrow \boldsymbol 0}\frac{L(\boldsymbol{p^*}(\boldsymbol q + \boldsymbol \delta), \boldsymbol q + \boldsymbol \delta) -L(\boldsymbol{p^*}(\boldsymbol q + \boldsymbol \delta), \boldsymbol q) -L(\boldsymbol{p^*}(\boldsymbol q), \boldsymbol q + \boldsymbol \delta) +L(\boldsymbol{p^*}(\boldsymbol q), \boldsymbol q)}{||\boldsymbol \delta||}$$
$$=\lim_{\boldsymbol \delta \rightarrow \boldsymbol 0}\nabla_{\boldsymbol q} L(\boldsymbol{p^*}(\boldsymbol q + \boldsymbol \delta), \boldsymbol q) - \nabla_{\boldsymbol q} L(\boldsymbol{p^*}(\boldsymbol q), \boldsymbol q)=0$$
as $\boldsymbol{p^*}$ and $\nabla_{\boldsymbol q} L$ are both continuous.
Then,
$$\lim_{\boldsymbol \delta \rightarrow \boldsymbol 0}\frac{L^*(\boldsymbol q + \boldsymbol\delta)-L^*(\boldsymbol q)-\nabla_{\boldsymbol q} L(\boldsymbol{p^*}(\boldsymbol q), \boldsymbol q)}{||\boldsymbol \delta||}$$
$$=\lim_{\boldsymbol \delta \rightarrow \boldsymbol 0}\frac{L(\boldsymbol{p^*}(\boldsymbol q + \boldsymbol \delta), \boldsymbol q) -L(\boldsymbol{p^*}(\boldsymbol q), \boldsymbol q)+L(\boldsymbol{p^*}(\boldsymbol q), \boldsymbol q + \boldsymbol \delta) -L(\boldsymbol{p^*}(\boldsymbol q), \boldsymbol q)-\nabla_{\boldsymbol q} L(\boldsymbol{p^*}(\boldsymbol q), \boldsymbol q)}{||\boldsymbol \delta||}$$
$$=\lim_{\boldsymbol \delta \rightarrow \boldsymbol 0}\frac{L(\boldsymbol{p^*}(\boldsymbol q + \boldsymbol \delta), \boldsymbol q) -L(\boldsymbol{p^*}(\boldsymbol q), \boldsymbol q)}{||\boldsymbol \delta||}$$
$$=\lim_{\boldsymbol \delta \rightarrow \boldsymbol 0}\frac{L(\boldsymbol{p^*}(\boldsymbol q + \boldsymbol \delta), \boldsymbol q) -L(\boldsymbol{p^*}(\boldsymbol q), \boldsymbol q)}{||\boldsymbol{p^*}(\boldsymbol v + \boldsymbol \delta)-\boldsymbol{p^*}(\boldsymbol v)||^2}\frac{||\boldsymbol{p^*}(\boldsymbol v + \boldsymbol \delta)-\boldsymbol{p^*}(\boldsymbol v)||^2}{||\boldsymbol \delta||}$$
$$\le \lim_{\boldsymbol \delta \rightarrow \boldsymbol 0}\lambda \frac{||\boldsymbol{p^*}(\boldsymbol v + \boldsymbol \delta)-\boldsymbol{p^*}(\boldsymbol v)||^2}{||\boldsymbol \delta||}=\lim_{\boldsymbol \delta \rightarrow \boldsymbol 0}\lambda ||\boldsymbol \delta||^{2\alpha}\frac{||\boldsymbol{p^*}(\boldsymbol v + \boldsymbol \delta)-\boldsymbol{p^*}(\boldsymbol v)||^2}{||\boldsymbol \delta||^{1+2\alpha}}\le\lim_{\boldsymbol \delta \rightarrow \boldsymbol 0}\lambda k ||\boldsymbol\delta||^{2\alpha}=0$$
Note that by the minimum property of $L^*$
$$0\le\lim_{\boldsymbol \delta \rightarrow \boldsymbol 0}\frac{L(\boldsymbol{p^*}(\boldsymbol q + \boldsymbol \delta), \boldsymbol q) -L(\boldsymbol{p^*}(\boldsymbol q), \boldsymbol q)}{||\boldsymbol \delta||}$$
So,
$$\lim_{\boldsymbol \delta \rightarrow \boldsymbol 0}\frac{L^*(\boldsymbol q + \boldsymbol\delta)-L^*(\boldsymbol q)-\nabla_q L(\boldsymbol{p^*}(\boldsymbol q), \boldsymbol q)}{||\boldsymbol \delta||}=0$$
and $\nabla L^*(\boldsymbol q)=\nabla_{\boldsymbol q} L(\boldsymbol{p^*}(\boldsymbol q), \boldsymbol q)$.
\end{proof}
\section{Proofs of Section \ref{subsec:singleEvent}}
\begin{lemma}
    When using dissimilarity function $\ell(p, q)=2(p-q)^2$, the loss function becomes
    $$L^*(q_E, q_{E^c}) = (q_{E} + q_{E^c} - 1)^2.\label{lem:burns_loss}$$
\end{lemma}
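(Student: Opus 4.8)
The plan is to make the feasible region fully explicit and then reduce the problem to a one-dimensional quadratic minimization. Since the two events here are $E$ and its complement $E^c$, they partition the ground set, so the Boolean algebra has exactly two atoms and the event matrix is $V = \left(\begin{smallmatrix} 1 & 0 \\ 0 & 1 \end{smallmatrix}\right)$. By Lemma~\ref{lem:convex}, $C(\mathtt E)$ is the convex hull of the columns of $V$, i.e.\ the segment $\set{(p, 1-p) : p \in [0,1]}$. Thus coherence amounts to the single linear constraint $p_E + p_{E^c} = 1$, and the minimization in \eqref{eqn:lstar} is over this segment.

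Next I would substitute $p_{E^c} = 1 - p_E$ into the loss $L(\bec p, \bec q) = 2(p_E - q_E)^2 + 2(p_{E^c} - q_{E^c})^2$ to obtain the one-variable function $g(p) = 2(p - q_E)^2 + 2(1 - p - q_{E^c})^2$, and minimize it over $p \in [0,1]$. Setting $g'(p) = 0$ gives the normal equation $2p = 1 + q_E - q_{E^c}$, so the critical point is $p_E^* = \tfrac{1}{2}(1 + q_E - q_{E^c})$ with $p_{E^c}^* = \tfrac{1}{2}(1 - q_E + q_{E^c})$. Uniqueness of this minimizer is supplied by Theorem~\ref{punique}, since $g$ is strictly convex and finite.

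The substitution then finishes the proof mechanically: the two residuals coincide, $p_E^* - q_E = p_{E^c}^* - q_{E^c} = -\tfrac{1}{2}(q_E + q_{E^c} - 1)$, so $L^*(q_E, q_{E^c}) = 2\cdot\tfrac14(q_E+q_{E^c}-1)^2 + 2\cdot\tfrac14(q_E+q_{E^c}-1)^2 = (q_E + q_{E^c} - 1)^2$.

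The whole computation is routine; the one point requiring a moment's care---and the only thing I would flag as a potential obstacle---is verifying that the interior critical point is actually feasible, so that the stated quadratic is the true value of $L^*$ rather than a boundary-truncated one. This holds because $q_E, q_{E^c} \in [0,1]$ forces $q_E - q_{E^c} \in [-1,1]$ and hence $p_E^* \in [0,1]$, so no clamping to an endpoint of the segment occurs.
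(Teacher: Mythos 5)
Your proposal is correct and follows essentially the same route as the paper: both compute the constrained minimizer $p_E^* = \tfrac{1}{2}(1 + q_E - q_{E^c})$ (the paper via Lagrange multipliers, you via substituting $p_{E^c} = 1 - p_E$ and minimizing the resulting one-variable quadratic) and then substitute back to get $(q_E + q_{E^c} - 1)^2$. Your explicit check that the critical point lies in $[0,1]$, so that no boundary clamping occurs, is a detail the paper's Lagrange-multiplier one-liner leaves implicit, and it is a worthwhile addition.
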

\begin{proof}
    Lagrange multipliers yield $p^*_E=\frac{q_E + 1 - q_{E^c}}{2}=1-p^*_{E^c}$. Therefore
    \begin{align*}
        L^*(q_{E}, q_{E^c})&=L((p_{E}, p_{E^c}), (q_{E}, q_{E^c}))\\
        &=2(p_E - q_{E})^2+2(p_{E^c} - q_{E^c})^2\\
        &=(q_{E} + q_{E^c} - 1)^2.\qedhere
    \end{align*}
\end{proof}
\begin{lemma}
    When using dissimilarity function 
    $$\ell(p, q)=f(p, q)=p\ln \frac pq + (1-p)\ln \frac{1-p}{1-q}$$
    the loss function becomes
    $$L^*(q_E, q_{E^c}) = \frac{(q_E + q_{E^c} - 1)^2}{(1-q_E+q_{E^c})(q_E+1-q_{E^c})}+O((q_E + q_{E^c} - 1)^3).\label{lem:burns_f}$$
\end{lemma}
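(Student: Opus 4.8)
The plan is to collapse the two-variable minimization defining $L^*$ into a one-variable calculus problem, solve that problem in closed form, and only then Taylor-expand. For $\mathtt E = (E, E^c)$ the events $E, E^c$ partition $\Omega$, so by Lemma~\ref{lem:convex} the only coherence constraint is $p_E + p_{E^c} = 1$ (with both nonnegative). Writing $p_E = p$ and $p_{E^c} = 1-p$ reduces the problem to $L^* = \min_{p\in[0,1]} G(p)$, where $G(p) = f(p,q_E) + f(1-p,q_{E^c})$. Grouping the logarithms puts $G$ in the convenient form
$$G(p) = p\ln\frac{p^2}{q_E(1-q_{E^c})} + (1-p)\ln\frac{(1-p)^2}{(1-q_E)q_{E^c}}.$$
Setting $G'(p)=0$ gives $\ln\frac{p}{1-p} = \tfrac12\ln\frac{q_E(1-q_{E^c})}{(1-q_E)q_{E^c}}$, so the minimizer satisfies $\frac{p^*}{1-p^*} = r := \sqrt{c/d}$ with $c := q_E(1-q_{E^c})$ and $d := (1-q_E)q_{E^c}$; Theorem~\ref{punique} guarantees this critical point is the unique minimizer.

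The key step is substituting $p^* = r/(1+r)$ back into $G$. A short computation shows that both logarithmic arguments collapse to the same quantity $1/(d(1+r)^2)$, so that $G(p^*) = -\ln(d(1+r)^2)$; and since $d(1+r)^2 = d + 2\sqrt{cd} + c = (\sqrt c + \sqrt d)^2$, I obtain the exact closed form
$$L^*(q_E,q_{E^c}) = -2\ln\left(\sqrt{q_E(1-q_{E^c})} + \sqrt{(1-q_E)q_{E^c}}\right).$$
As a sanity check, on the coherent line $q_{E^c} = 1-q_E$ the argument of the logarithm equals $q_E + (1-q_E) = 1$, giving $L^* = 0$, as it must.

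Finally I would expand this exact formula around the coherent line. Introducing $\epsilon = q_E + q_{E^c} - 1$ and $\delta = q_E - q_{E^c}$, one computes $c+d = \tfrac12(1 - \epsilon^2 + \delta^2)$ and $cd = \tfrac{1}{16}(P^2 - 4\epsilon^2)$ with $P := 1 + \epsilon^2 - \delta^2$, whence $u^2 := (\sqrt c + \sqrt d)^2 = c + d + 2\sqrt{cd} = 1 - \tfrac12(P - \sqrt{P^2 - 4\epsilon^2})$. Expanding the square root gives $u^2 = 1 - \frac{\epsilon^2}{P} + O(\epsilon^4)$, and therefore $L^* = -\ln(u^2) = \frac{\epsilon^2}{P} + O(\epsilon^4) = \frac{\epsilon^2}{1-\delta^2} + O(\epsilon^3)$. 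Recognizing $1 - \delta^2 = (1 - q_E + q_{E^c})(q_E + 1 - q_{E^c})$ yields exactly the claimed expression.

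The only real obstacle is the bookkeeping in the last two paragraphs: the algebraic collapse producing the clean $(\sqrt c + \sqrt d)^2$ form, and the careful handling of the square root so that the error is genuinely of the stated order. Both are routine once the exact formula is in hand, and the expansion is in fact accurate to $O(\epsilon^4)$. I would also remark that the underlying quadratic approximation $f(p,q) \approx (p-q)^2/(2q(1-q))$ is what makes the transposed divergence $f^o$ produce the identical leading term in Lemma~\ref{lem:burns_fo2}.
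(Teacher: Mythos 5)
Your proposal is correct, and at the expansion stage it takes a genuinely different (and cleaner) route than the paper's. Both arguments find the same minimizer --- the odds of $p^*$ equal the geometric mean of the odds $q_E/(1-q_E)$ and $(1-q_{E^c})/q_{E^c}$, exactly as in the paper's Lagrange-multiplier step --- but the paper then passes to odds variables $o=\frac{q_E}{1-q_E}$, $o'=\frac{1-q_{E^c}}{q_{E^c}}$, writes $L^*=\ln\frac{(1+o)(1+o')}{(1+\sqrt{oo'})^2}$, Taylor expands in $o$ about $o=o'$ to get $\frac{(o-o')^2}{4o'}$, converts back to probabilities to obtain $\frac{(q-q')^2}{4q'(1-q')}$, and finally needs one more approximation to replace $4q'(1-q')$ by the symmetric denominator $(q+q')(2-q-q')$, absorbing another cubic error. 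You instead eliminate $p^*$ entirely to reach the exact, manifestly symmetric closed form $L^*=-2\ln\bigl(\sqrt{q_E(1-q_{E^c})}+\sqrt{(1-q_E)q_{E^c}}\bigr)$ --- twice the Bhattacharyya distance between the binary distributions $(q_E,1-q_E)$ and $(1-q_{E^c},q_{E^c})$, which is equivalent to the paper's odds expression since $d(1+r)^2=(\sqrt c+\sqrt d)^2$ --- and expand once in the symmetric coordinates $\epsilon=q_E+q_{E^c}-1$, $\delta=q_E-q_{E^c}$. I checked your algebra: $c+d=\tfrac12(1-\epsilon^2+\delta^2)$, $cd=\tfrac1{16}(P^2-4\epsilon^2)$ with $P=1+\epsilon^2-\delta^2$, and $u^2=1-\tfrac{\epsilon^2}{P}+O(\epsilon^4)$ are all correct, and $1-\delta^2=(1-q_E+q_{E^c})(q_E+1-q_{E^c})$ gives the claimed denominator. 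Your route buys three things: the symmetric denominator appears directly rather than through an asymmetric intermediate; the error is visibly $O(\epsilon^4)$ (only even powers of $\epsilon$ occur in the expansion of $u^2$), strictly sharper than the stated $O(\epsilon^3)$; and the exact closed form is independently useful (e.g.\ it is what makes $L^*$ computable along the whole coherent-line neighborhood, not just perturbatively). One shared caveat, worth a sentence if you write this up: in both your proof and the paper's, the implied constants degenerate as $1-\delta^2\to 0$, i.e.\ the asymptotic holds uniformly only for $q_E,q_{E^c}$ bounded away from the corners where the denominator vanishes.
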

\begin{proof}
    Lagrange multipliers yield that the closest coherent probability $p^*_{E}$ satisfies $\frac{p^*_{E}}{1-p^*_{E}}=\left(\frac{q_E(1-q_{E^c})}{(1-q_{E})q_{E^c}}\right)^{1/2}$, giving the loss
\begin{align*}
    L^*(q_E, q_{E^c})&=2p^*_{E} \ln \frac{p^*_E}{1-p^*_E} - p^*_E\ln\frac{q_E}{1-q_E} - p^*_E\ln\frac{q_{E_c}}{1-q_{E^c}}+\ln \frac{(1-p^*_E)^2}{(1-q_E)q_{E^c}}\\
    &=\ln \frac{(1-p^*_E)^2}{(1-q_E)q_{E^c}}.
\end{align*}
Letting $o_{E}= \frac{q_E}{1-q_E}$ and $o'_E=\frac{1-q_{E^c}}{q_{E^c}}$ be the odds of $E$ derived from the two probes with $q=q_E$ and $q'=1-q_{E^c}$ the probabilities derived from the probes and Taylor expanding in $o$ about $o=o'$ yields:
\begin{align*}
    L^*(q_E, q_{E^c})&=\ln \frac{(1+o)(1+o')}{(1+\sqrt{oo'})^2}\\
    &\approx \frac{1}{2}(o-o')^2 \deriv{^2}{o^2}\ln \frac{(1+o)(1+o')}{(1+\sqrt{oo'})^2}+O((o-o')^3)\\
&=\frac{1}{2}(o-o')^2\left(-\frac{1}{(1+o')^2}+\frac{1+2o'}{2o'(1+o')^2}\right)+O((o-o')^3)\\
&=\frac{1}{4o'}(o-o')^2+O((o-o')^3)\\
&=\frac{(q-q')^2}{4q'(1-q')}+O((q-q')^3)\\
&\approx \frac{(q - q')^2}{(q+q')(2-q-q')}+(q-q')^2O(q-q')+O((q-q')^3)\\
&=\frac{(q - q')^2}{(q+q')(2-q-q')}+O((q - q')^3). \qedhere
\end{align*}
\end{proof}
\begin{lemma}
    When using dissimilarity function 
    $$\ell(p, q)=f^o(p, q)=q\ln \frac qp + (1-q)\ln \frac{1-q}{1-p}$$
    the loss function becomes
    $$L^*(q_E, q_{E^c}) = \frac{(q_E + q_{E^c} - 1)^2}{(1-q_E+q_{E^c})(q_E+1-q_{E^c})}+O((q_E + q_{E^c} - 1)^3).$$\label{lem:burns_fo}
\end{lemma}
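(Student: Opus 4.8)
The plan is to collapse the minimization to one dimension, solve it in closed form, and then recognize the optimal value as the Jensen gap of the binary entropy function, which makes the Taylor expansion essentially automatic. Since $E$ and $E^c$ partition $\Omega$, Lemma~\ref{lem:convex} tells us the coherent set is the segment $\set{(p_E,p_{E^c}) : p_E+p_{E^c}=1}$; writing $p=p_E$ and $p_{E^c}=1-p$, I would minimize $g(p)=f^o(p,q_E)+f^o(1-p,q_{E^c})$ over $p\in(0,1)$. Setting $g'(p)=0$ (equivalently, running the Lagrange-multiplier computation used for Lemma~\ref{lem:burns_f}) gives the stationarity condition $\frac{1-q_E+q_{E^c}}{1-p}=\frac{q_E+1-q_{E^c}}{p}$, whose unique solution is the arithmetic mean $p^*=\frac{q_E+1-q_{E^c}}{2}$. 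This is the same minimizer as for the quadratic loss of Lemma~\ref{lem:burns_loss}, matching the ``repetition'' entry of Table~\ref{fig:Lminimizer}.

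Next I would substitute $p^*$ back into $g$. Introducing $q=q_E$ and $q'=1-q_{E^c}$, so that $q_E+q_{E^c}-1=q-q'$ and $p^*=\frac{q+q'}{2}$, the terms involving $\ln p^*$ and $\ln(1-p^*)$ collect with coefficients $q+q'=2p^*$ and $(1-q)+(1-q')=2(1-p^*)$, while the numerators assemble into binary entropies. With $h(x)=-x\ln x-(1-x)\ln(1-x)$ this produces the exact identity $L^*(q_E,q_{E^c})=2h(p^*)-h(q)-h(q')$, where $p^*=\frac{q+q'}{2}$ is the midpoint of $q$ and $q'$. Concavity of $h$ re-proves $L^*\ge 0$ at once, and the right-hand side is exactly the symmetric second difference of $h$ about its midpoint.

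Finally I would expand. Writing $m=\frac{q+q'}{2}=p^*$ and $d=\frac{q-q'}{2}=\frac{q_E+q_{E^c}-1}{2}$, we have $L^*=2h(m)-h(m+d)-h(m-d)$, which is an even function of $d$; hence all odd-order terms of its Taylor expansion in $d$ vanish and $L^*=-h''(m)\,d^2+O(d^4)$. Since $h''(x)=-\frac{1}{x(1-x)}$ and the identities $q_E+1-q_{E^c}=2m$ and $1-q_E+q_{E^c}=2(1-m)$ are exact, the denominator $4m(1-m)$ equals $(1-q_E+q_{E^c})(q_E+1-q_{E^c})$ with no hidden dependence on $q_E+q_{E^c}-1$, and I obtain $L^*=\frac{(q_E+q_{E^c}-1)^2}{(1-q_E+q_{E^c})(q_E+1-q_{E^c})}+O((q_E+q_{E^c}-1)^4)$, which is stronger than the claimed $O((q_E+q_{E^c}-1)^3)$ error. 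The only delicate points are bookkeeping: confirming the evenness in $d$ so the linear term truly drops out, and checking that the two factors above are exact rather than approximate. The parallel with Lemma~\ref{lem:burns_f}, where the identical denominator emerged through an odds substitution, is a reassuring consistency check.
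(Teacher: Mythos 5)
Your proposal is correct, and it reaches the result by a genuinely different route than the paper's proof, even though both begin identically: the paper also reduces to the segment $p_E+p_{E^c}=1$, sets $q=q_E$, $q'=1-q_{E^c}$, and obtains the same minimizer $p^*=\frac{q+q'}{2}$ via Lagrange multipliers. Where you diverge is in the expansion. The paper differentiates $L^*(q,q')$ directly in $q'$ (implicitly using the envelope theorem, since $\partial L/\partial p$ vanishes at $p^*$), finds $\pderiv{}{q'}L^*=\ln\frac{q'}{1-q'}-\ln\frac{p^*}{1-p^*}$ and $\pderiv{^2}{q'^2}L^*=\frac{1}{q'(1-q')}-\frac{1}{2p^*(1-p^*)}$, Taylor expands about $q'=q$, and then converts $\frac{1}{4q(1-q)}$ into the symmetric denominator at the cost of an extra $O((q-q')^3)$ correction. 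You instead substitute $p^*$ back to get the exact closed-form identity $L^*=2h(m)-h(m+d)-h(m-d)$ with $h$ the binary entropy, $m=\frac{q+q'}{2}$, $d=\frac{q-q'}{2}$ — a Jensen gap, i.e.\ a symmetric second difference of $h$ about its midpoint. This buys three things the paper's computation does not make visible: nonnegativity of $L^*$ falls out immediately from concavity of $h$; the denominator $(1-q_E+q_{E^c})(q_E+1-q_{E^c})=4m(1-m)$ is exact, depending only on $m$ and not on $d$ (your verification of this is right, since $q+q'=2m$ and $2-q-q'=2(1-m)$); and evenness in $d$ kills the cubic term, upgrading the remainder to $O((q_E+q_{E^c}-1)^4)$, strictly stronger than the lemma's claim. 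The one caveat worth stating explicitly, which applies equally to the paper's version, is that the implied constant (governed by $h''''(m)$) depends on $m$ and degenerates as $m\to 0$ or $m\to 1$, so the expansion is uniform only for $m$ in compact subsets of $(0,1)$; since the paper's own statement carries the same implicit restriction, this is not a gap, merely worth a sentence if you write it up.
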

\begin{proof}
    Again letting $q=q_E$ and $q'=1-q_{E^c}$, Lagrange multipliers show that $p^*_E = \frac{q+q'}{2}$, which we will denote by $p$ for ease of notation. Taking the derivative of $L^*$ in $q'$ gives
    $$\pderiv{}{q'}L^*(q, q') = \ln\frac{q'}{1-q'}-\ln \frac{p}{1-p}$$
    and
    $$\pderiv{^2}{q'^2}L^*(q, q') = \frac{1}{q'(1-q')}-\frac{1}{2p(1-p)}.$$
    Then, Taylor expanding around $q=q'$ in $q'$ gives
    \begin{align*}
        L^*(q_E, q_{E^c}) &\approx \frac{1}{2}(q-q')^2\pderiv{^2}{q'^2}L^*(q, q') + O((q-q')^3)\\
        &=\frac{(q-q')^2}{2q'(1-q')}-\frac{q-q')^2}{4p(1-p)}+ O((q-q')^3\\
        &\approx \frac{(q-q')^2}{4p(1-p)}+(q-q')^2O((q-q'))+ O((q-q')^3\\
        &=\frac{(q-q')^2}{(q+q')(2-q-q')} + O((q-q')^3).\qedhere
    \end{align*}
\end{proof}
\section{Comparison of Expert Aggregation Methods\label{sec:specialcompare}}
\noindent Here, we will look at the following scenario and consider what various methods from Section \ref{sec:indCohExp} give us as estimates of the probabilities of various events:
\\\\
We take $N=4$ and consider two experts. The first expert submits probability estimates for two events
$$V_1=\begin{pmatrix}
    1 & 1 & 0 & 0\\
    1 & 1 & 1 & 0
\end{pmatrix}, \boldsymbol{q}_1=\begin{pmatrix}
    0.5\\0.9
\end{pmatrix}$$
and the second expert submits estimates for three events
$$V_2=\begin{pmatrix}
    1 & 1 & 0 & 0\\
    0 & 1 & 1 & 0\\
    0 & 0 & 0 & 1
\end{pmatrix}, \boldsymbol{q}_2=\begin{pmatrix}
    0.3\\0.2\\0.6
\end{pmatrix}.$$
Table \ref{table:assymetriccomparison} summarizes the aggregated beliefs $\bec p^*$ using four different methods and two different dissimilarity functions.\color{black}

\begin{table}[H]
\begin{center}
    \begin{tabular}{|p{15em} |m{2em}|m{2em}| m{6em} | m{6em} |}
         \hline
         Aggregation Method & $\bec q_1$& $\bec q_2$ &$\bec p^*$ with $\ell=f$ & $\bec p^*$ with $\ell=f^o$
        \\\hline
            Sum over stated beliefs only (not content invariant):
            \newline $\mathcal D_i = D_{i, \mathtt E_i}$&$\begin{matrix}
            0.5 \\ 0.9 \\ - \\ -
        \end{matrix}$ & $\begin{matrix}
            0.3 \\ - \\ 0.2 \\ 0.6
        \end{matrix}$
        &$\begin{matrix}
            0.43 \\ 0.68 \\ 0.25 \\ 0.32
        \end{matrix}$
        &$\begin{matrix}
            0.41 \\ 0.64 \\ 0.22 \\ 0.36
        \end{matrix}$\\
        \hline
        Sum over all inferable beliefs:
        \newline
        $\mathcal D_i=D_{i, I_i}$ &$\begin{matrix}             0.5 \\ 0.9 \\ - \\ -         \end{matrix}$ & $\begin{matrix}             0.3 \\ - \\ 0.2 \\ 0.6         \end{matrix}$& $\begin{matrix}
            0.46 \\ 0.73 \\ 0.46 \\ 0.27
        \end{matrix}$
        &$\begin{matrix}
            0.43 \\ 0.71 \\ 0.47 \\ 0.29
        \end{matrix}$\\
        \hline
        Sum over the minimal positive basis:
        \newline
        $\mathcal D_i= D_{i, B_i}$ &$\begin{matrix}             0.5 \\ 0.9 \\ - \\ -         \end{matrix}$ & $\begin{matrix}             0.3 \\ - \\ 0.2 \\ 0.6         \end{matrix}$& $\begin{matrix}
            0.46 \\ 0.72 \\ 0.42 \\ 0.28
        \end{matrix}$&$\begin{matrix}
            0.42 \\ 0.69 \\ 0.41 \\ 0.31
        \end{matrix}$\\
        \hline
        Sum over the minimal positive basis, asymmetric variant: 
        \newline $\mathcal D_i=\tilde D_{i, B_i}$ &$\begin{matrix}             0.5 \\ 0.9 \\ - \\ -         \end{matrix}$ & $\begin{matrix}             0.3 \\ - \\ 0.2 \\ 0.6         \end{matrix}$& $\begin{matrix}
            0.48 \\ 0.74 \\ 0.42 \\ 0.26
        \end{matrix}$&$\begin{matrix}
            0.41 \\ 0.69 \\ 0.41 \\ 0.31
        \end{matrix}$\\
        \hline
         \end{tabular}

\color{black}
\caption{Minimizer of $L$ for various loss functions, of four possible forms defined in equations (\ref{eqn:contentinvariantloss}) and (\ref{eqn:assymetricvariant}), three of which are content invariant, each using one of two possible dissimilarity functions. 
$\bec p^*$ is the nearest coherent vector of beliefs. Observe that all three content invariant aggregation methods give similar estimates $\bec p^*$ provided they use the same loss function.}\label{table:assymetriccomparison}
\end{center}
\end{table}
\noindent It should be noted that the first method assigns a probability of $0$ to atom $\omega_2$, but that it is given non-zero probability by all the content invariant methods. This is because the event $\bec e_2$ is not present in the sum over stated beliefs only, but is present in all the other sums.

\section{Extended Example: Masked Letters}\label{sec:ex}
\noindent We illustrate the methods of Section \ref{sec:indCohExp} in the following scenario: given a word with one masked letter, we seek to predict the masked letter by combining two heuristics, which will play the role of experts in Section \ref{sec:indCohExp}. One heuristic uses the preceding letters to make predictions and the other heuristic uses the succeeding letters to make its predictions.
\\\\For example, given the word EM*IL, with the third letter masked, the first expert would use the 3-gram EM* to make its predictions $Q_1(\text{A})=0.16, Q_1(\text{B})=0.08, \dots$. The second heuristic would use the 3-gram *IL to make the predictions $Q_2(\text{A})=0.32, Q_2(\text{B})=0.27, \dots$.
\\\\
If each heuristic is derived from counting 3-gram frequencies in a corpus, then it will be internally coherent, so we use a content invariant loss function as outlined in Section \ref{sec:indCohExp}. As $\#I_1=\#I_2=2^{26}$, the size of the set of events whose probabilities can be inferred, is much too large, one should either sum over the minimal spanning sets $B_i$ or sum over subsets of the minimal spanning sets that add to $\bec 1$. We will outline the calculations using dissimilarity function $\ell=f$ and compare the results with $\ell = f^o$.
\subsection{\texorpdfstring{Method 1: Summing over a Positive Basis}{Method 1: Summing over a Positive Basis}}
Here we consider the loss function determined by
$$\mathcal D_i=D_{i, B_i} = \frac{1}{B_i}\sum_{\boldsymbol b \in B_i}\ell(P(\boldsymbol b), Q_i(\boldsymbol b))$$
Recall that $M_i$ is the normalizing constant, equal to the number of terms in the summand. Additionally, in this situation, the heuristics' predictions are exactly the minimal spanning set. So
$$L(\bec p, (\bec q_1, \bec q_2))=L(V\bec \pi, (\bec q_1,\bec q_2))=\frac{1}{26}\sum_{\alpha\in \set{\text{A}, \text{B}, \dots}} \ell(\pi_\alpha, q_{1, \alpha})+\ell(\pi_\alpha, q_{2, \alpha}).$$
Then, if all entries in $\bec \pi$ are strictly positive (which they will be for $f$ and $f^o$ as long as all entries in $\bec q_1$ and $\bec q_2$ are non-zero), by Lagrange multipliers as $\sum_{\alpha\in \set{\text{A}, \text{B}, \dots}}\pi_\alpha=1$, there is some constant $k$ where for all $\alpha$
$$\frac k{26}=\pderiv{L}{\pi_\alpha}=\pderiv{\ell(\pi_\alpha, q_{1, \alpha})}{\pi_\alpha}+\pderiv{\ell(\pi_\alpha, q_{2, \alpha})}{\pi_\alpha}.$$
Taking $\ell=f$ and calculating shows
$$k=26\pderiv{L}{\pi_\alpha}=\ln\frac{\pi_\alpha}{1-\pi_\alpha}-\ln\frac{q_{1, \alpha}}{1-q_{1, \alpha}}+\ln\frac{\pi_\alpha}{1-q_\alpha}-\ln\frac{q_{2, \alpha}}{1-q_{2, \alpha}}$$
so 
$$\ln\frac{\pi_\alpha}{1-\pi_\alpha}-\frac12\left(\ln\frac{q_{1, \alpha}}{1-q_{1, \alpha}}+\ln\frac{q_{2, \alpha}}{1-q_{2, \alpha}}\right)$$
does not depend on $\alpha$. Computing $\bec \pi$ analytically from here is difficult, so some numerical algorithm should be used.
\subsection{\texorpdfstring{Method 2: Using an Asymmetric Loss Function}{Method 2: Using an Asymmetric Loss Function}}
Here we consider the loss function determined by
$$\mathcal D_i=\tilde D_{i, B_i} = \frac{1}{\#M_i}\sum_{S \in O_i} \sum_{\boldsymbol b \in S}\tilde{\ell}(P(\bec b), Q_i(\bec b))$$
Here $O_i\subset 2^{B_i}$ is the set of subsets $S$ of $B_i$ whose sum is $\bec 1$. In this case, $O_i$ is the singleton set $\set{B_i}$ as the heuristic's predictions are the maximal spanning set and their sum is $1$, so we sum over the same events as previously. However, instead of using the function $\ell$, we use $\tilde \ell$ and ignore the terms related to the surprise of a letter not being chosen. As $M_1=M_2=26$ is the number of terms in the summand, the loss function is
$$L(V\bec \pi, (\bec q_1,\bec q_2))=\frac{1}{26}\sum_{\alpha\in \set{\text{A}, \text{B}, \dots}} \tilde \ell(\pi_\alpha, q_{1, \alpha})+\tilde \ell(\pi_\alpha, q_{2, \alpha}).$$
As we are using $\ell=f$, so $\tilde \ell(p, q)=p\ln \frac pq$, this becomes
$$=\frac{1}{26}\sum_{\alpha\in \set{\text{A}, \text{B}, \dots}} \pi_\alpha \ln \frac{\pi_\alpha}{q_{1, \alpha}}+\pi_\alpha\ln \frac{\pi_\alpha}{q_{2, \alpha}}.$$
And again using Lagrange multipliers to take the derivative, the quantity
$$26\pderiv{L}{\pi_\alpha} - 2=\ln \frac{\pi_\alpha}{q_{1, \alpha}} + \ln \frac{\pi_\alpha}{q_{2, \alpha}}$$
does not depend on $\alpha$. Therefore, for some $k$ that does not depend on $\alpha$,
$$\pi_\alpha = k(q_{1, \alpha}q_{2, \alpha})^\frac{1}{2}.$$
Using the fact that $\sum_\alpha \pi_\alpha = 1$,
$$\pi_\alpha=(q_{1, \alpha}q_{2, \alpha})^\frac{1}{2}\left(\sum_\beta (q_{1, \beta}q_{2, \beta})^\frac12\right)^{-1}.$$
\subsection{Comparison of Methods}
Using a list of 10,000 common English words \cite{price_wordlist}, there are 7 letters that appear both after EM and before IL, not necessarily in the same word (A, B, E, M, O, P, S). Applying the methods described above yields Table \ref{fig:extended_ex}:
\begin{table}[h]
\begin{center}
         \begin{tabular}{|m{3em}|m{2em}|m{2em}| m{9em} |m{9em}| m{5em} |}
         \hline
         Letter &$\bec q_1$ & $\bec q_2$ & \multicolumn{2}{c|}{$\ell = f$} & $\ell = f^o$  \\\cline{4-5} 
        & & & $\bec p^*$ with $\mathcal D_i=D_{i, B_i}$ & $\bec p^*$ with $\mathcal D_i = \tilde{{\scriptstyle D}}_{i, B_i}$ & $\bec p^*$
         \\\hline $\begin{matrix}
             A \\ B \\ E \\ M \\ O \\ P \\ S
         \end{matrix}$ &
        $\begin{matrix}
    0.16\\0.08\\ 0.39\\ 0.01\\ 0.15\\  0.17\\  0.04
\end{matrix}$ & 
$\begin{matrix}
     0.32\\  0.27\\ 0.02\\  0.22\\ 0.03\\ 0.07\\ 0.07
\end{matrix}$ &$\begin{matrix}
            0.29 \\ 0.20 \\ 0.14 \\ 0.07 \\ 0.09 \\ 0.14 \\ 0.07
        \end{matrix}$
        &$\begin{matrix}
            0.31 \\ 0.20 \\ 0.13 \\ 0.05 \\ 0.09\\0.15\\0.07
        \end{matrix}$
        & $\begin{matrix}
            0.24\\0.18\\0.20\\0.12\\0.09\\0.12\\0.05
        \end{matrix}$
         \\\hline
    \end{tabular}
\end{center}
\caption{The resulting $\bec p^*$s when reconciling the heuristic $\bec q_1$ based on the 3-gram EM* and the heuristic $\bec q_2$ based on the 3-gram *IL with various methods.}
\label{fig:extended_ex}
\end{table}
\noindent \\\\Firstly, in this setup, when using $f^o$, it does not matter whether one includes the $\tilde f^o(1-p, 1-q)$ terms. In either case, the method amounts to setting $p_\alpha = \frac{q_{1, \alpha}+q_{2, \alpha}}{2}$, which is coherent as the set of coherent beliefs is convex. This will not always be the case, and was so here because of the simplicity of the events included in the sum. Looking at the results of using $f$, there is little difference, although low probabilities ($q_{1, m}=0.01$) seem to be given more weight when excluding $\tilde f(1-p, 1-q)$ terms. As usual, $f$ gives more extreme estimates than $f^o$ with either method.
\\\\
Here, both methods with either loss function were correct in assigning the letter A the highest probability in EMAIL. However, this is not usually the case. Over all the 5 letter words in \cite{price_wordlist}, both methods using $f$ gave the highest probability to the actual letter about $34\%$ of the time and both methods using $f^o$ were correct about $33\%$ of the time. The code to reproduce this experiment can be found at \url{https://github.com/scim142/quantifying_coherence}.
\end{document}